\documentclass[12pt,reqno]{amsart}

\AtBeginDocument{\addtocontents{toc}{\protect\setlength{\parskip}{0pt}}}

\usepackage{latexsym,amsmath,amssymb,amsthm,mathtools,mathabx}
\usepackage{enumitem}
\usepackage{graphicx}
\usepackage{hyperref}
\hypersetup{
  colorlinks=false,
  pdfborder={0 0 1},
  linkbordercolor={1 0 0},
  citebordercolor={1 0 0},
  urlbordercolor={1 0 0},
  bookmarksnumbered=true
}

\numberwithin{equation}{section}

\theoremstyle{plain}
\newtheorem{theorem}{Theorem}[section]
\newtheorem{lemma}[theorem]{Lemma}
\newtheorem{proposition}[theorem]{Proposition}

\theoremstyle{definition}
\newtheorem{definition}[theorem]{Definition}
\newtheorem{example}[theorem]{Example}

\theoremstyle{remark}
\newtheorem{remark}[theorem]{Remark}

\newcommand{\R}{\mathbb R}
\newcommand{\Z}{\mathbb Z}
\newcommand{\C}{\mathbb C}
\newcommand{\N}{\mathbb N}
\newcommand{\Sch}{\mathfrak S}
\newcommand{\Tr}{\operatorname{Tr}}

\newcommand{\supp}{\operatorname{supp}}

\newcommand{\rad}{\mathrm{rad}}
\newcommand{\KP}{\mathrm{KP}}
\newcommand{\ZK}{\mathrm{ZK}}

\newcommand{\ip}[2]{\left\langle #1,#2\right\rangle}

\newcommand{\dd}{\,d}
\newcommand{\norm}[1]{\left\|#1\right\|}

\title[Almost-orthogonal Strichartz estimates]{Almost-orthogonal Strichartz estimates \\ and radial improvements}

\author{Hongzhou Ji, Liping Xu, and An Zhang}

\address{School of Mathematical Sciences, Beihang University, 37 Xueyuan Road, Beijing, 100191, China}
\email{jhz2509@buaa.edu.cn}
\email{xuliping.p6@gmail.com}
\email{anzhang@buaa.edu.cn}

\begin{document}

\begin{abstract}
We extend the Schatten-duality principle from orthonormal systems to (almost-orthogonal) Bessel families and apply it to prove sharp and improved Bessel-family Strichartz estimates for the Kadomtsev--Petviashvili, Zakharov--Kuznetsov, and radial Schr\"odinger equations. In particular, for the Schr\"odinger equation, we establish an improved estimate in a larger radial range of space-time exponents, not from dispersive estimates as in the KP/ZK models, but from direct robust Schatten estimates, using some vector-valued multi-product trace formula and multilinear weighted fractional integral formula. To the best of our knowledge, this is the first radial improvement for systems, and the almost-orthogonal estimates are also new to the literature. We also derive necessary conditions.
\end{abstract}

\date{\today. ~\emph{MSC.} 42B37, 58J50, 35P05. \emph{Keywords.} Strichartz estimates, almost-orthogonal systems, Bessel family,  radial improvements, dispersive equations, Schr\"odinger equations, Kadomtsev--Petviashvili equations, Zakharov--Kuznetsov equations.}

\maketitle

\tableofcontents

\section{Introduction}\label{sec:introduction}

Let $X$ be a spatial measure space, and let $(U(t))_{t\in\R}$ be the
propagator associated with a linear dispersive equation on $X$.  The classical
(single-function) Strichartz estimate, introduced by Strichartz
\cite{Strichartz1977} and further developed by Keel--Tao \cite{KeelTao}, states
that for suitable exponents $(p,q)$,
\begin{equation}\label{eq:intro-single}
  \norm{U(t)f}_{L_t^{2p}(\R;L_x^{2q}(X))}\lesssim \norm{f}_{L^2(X)}.
\end{equation}
For a system $(f_j)_j\subset L^2(X)$ with proper orthogonality, we consider the following system 
Strichartz estimate:
\begin{equation}\label{eq:sys-str}
  \norm{\sum_j\lambda_j|U(t)f_j|^2}_{L_t^p(\R;L_x^q(X))}
  \lesssim \norm{\lambda}_{\ell^\beta}.
\end{equation}
The Bourgain notation $\lesssim$ will be used throughout the paper, with implicit constants. We call $\beta$ the discrete \emph{system exponent} and aim to find the largest $\beta$ for \eqref{eq:sys-str} to hold with fixed $(p,q)$.  Applying \eqref{eq:intro-single} with the
triangle inequality yields the
$\ell^1$-bound ($\beta=1$) for \eqref{eq:sys-str}, whereas an estimate with $\beta>1$ reflects an
improvement stemming from orthogonality. 

Strichartz estimates for orthonormal systems were introduced by Frank--Lewin--Lieb--Seiringer in their pioneering work \cite{FLLS}
and further developed by Frank--Sabin \cite{FrankSabin} via complex interpolation and Schatten
duality; see e.g.,  Bez-Hong-Lee-Nakamura-Sawano \cite{MR3985036} and Bez--Lee--Nakamura \cite{BLN} for substantial extensions and generalizations in the literature.
The theory has also been developed beyond the Euclidean setting.
Nakamura \cite{NakamuraTorus} established orthonormal
Strichartz estimates for the Schr\"odinger equation on the torus.
Wang--Zhang--Zhang \cite{WZZ} and Ji--Xu--Zhang \cite{JXZ} subsequently improved such results by decoupling and
extended them to compact manifolds for the wave, Klein--Gordon, and fractional
Schr\"odinger equations.
Bhimani--Choudhary \cite{BC} considered fractional
Schr\"odinger equations on tori and waveguide manifolds.
Besides, orthonormal spectral cluster bounds on compact manifolds have also been initiated by Frank--Sabin \cite{FrankSabinSpectral},
and further developed by Ren--Zhang \cite{RenZhang} and Cuenin--Nguyen--Su \cite{CueninNguyenSu} on flat tori and manifolds with nonpositive curvature.
For other generalizations, Hoshiya
\cite{Hoshiya2024,Hoshiya2025} considered dispersive equations with potentials and for
repulsive Hamiltonians, while Feng--Mondal--Song--Wu
\cite{FMSW} investigated
nonnegative self-adjoint operators on general measure spaces.
Recently, Bennett--Bez--Guti\'errez--Nakamura--Oliveira \cite{BBGNO}
developed a new approach based on Wigner distributions and obtained weighted
orthonormal estimates for the Schr\"odinger equation.

The purpose of this paper is to extend the framework to almost-orthogonal Bessel families and to apply it to the
Kadomtsev--Petviashvili, Zakharov--Kuznetsov, and radial Schr\"odinger
propagators.  To the best of our knowledge, no improved system Strichartz estimate has been established before for a larger radial range, even for orthonormal systems.

\subsection{Basic definitions}\label{subsec:intro-definitions}

For a Hilbert space $(H,\langle\cdot,\cdot\rangle)$ and $1\le r<\infty$, the Schatten class
$\Sch^r(H)$ consists of compact operators $A$ whose singular values
$(s_n(A))_n$ belong to $\ell^r$, endowed with the norm
\[
  \norm{A}_{\Sch^r(H)}
  =\left(\sum_n s_n(A)^r\right)^{1/r}.
\]
We use the standard convention
\[
  \norm{A}_{\Sch^\infty(H)}=\norm{A}_{H\to H}.
\]
\begin{definition}\label{def:intro-bessel}
Let $H$ be a Hilbert space.  A normalized family $f=(f_j)_j\subset H$
is called a \emph{Bessel family} if there exists a constant $C<\infty$ such that
\begin{equation}\label{eq:intro-bessel}
  \norm{\sum_j \lambda_j f_j}_{H}^{2}
  \le C\sum_j |\lambda_j|^{2}
\end{equation}
for every finitely supported sequence $\lambda=(\lambda_j)_j$.  The smallest constant $C$ for
which \eqref{eq:intro-bessel} holds uniformly for all such $\lambda$ is denoted by $C_B=C_B(f)$ and is called the
\emph{Bessel bound} of the family $f$.
\end{definition}

\begin{example}\label{ex:intro-bessel-examples}
To illustrate the notion, we provide two examples of Bessel families.
\begin{enumerate}[label=\textup{(\roman*)}]
\item Let $(g_k)_{k\ge1}$ be an orthonormal family and fix $m\in\N^*$.  Then $(g_k)_{k\ge1}$ is a Bessel family with bound $C_B=1$.   If every
$g_k$ is repeated $m$ times, namely
\[
  f_{k,\ell}=g_k,\qquad k\ge1,\quad 1\le \ell\le m,
\]
then $(f_{k,\ell})$ is a normalized Bessel family with bound $C_B=m$.

\item Let $(e_j)_{j\ge1}$ be the standard orthonormal basis of $\ell^2$ and fix
$0\le\delta\le1$.  Define
\[
  f_{2k-1}=e_{2k-1},\qquad
  f_{2k}=\delta e_{2k-1}+\sqrt{1-\delta^2}\,e_{2k},
  \qquad k\ge1.
\]
Then $(f_j)_{j\ge1}$ is a Bessel family with bound $C_B=1+\delta \ \to1$ as $\delta\to0$.

\end{enumerate}
\end{example}

\begin{remark}\label{rem:intro-bessel-normalization}
We always have $C_B\ge1$, and $C_B=1$ if and only if the family is
orthonormal.  Thus $C_B$ quantifies the degree of almost orthogonality: the
closer $C_B$ is to $1$, the more nearly orthogonal the family is.  Let
$(e_j)_j$ be the standard orthonormal basis of $\ell^2$.  For a
Bessel family $f=(f_j)_j\subset H$, the operator
\[
  T_f:\ell^2\to H,\qquad T_fe_j=f_j,
\]
is bounded with
\[
  C_B=\norm{T_f}^2_{\ell^2\to H}
   =\norm{T_f^*T_f}_{\ell^2\to\ell^2}.
\]
\end{remark}

We next consider three \emph{dispersive propagators}.  
Throughout the paper, for $d\ge1$,  we use the unitary Fourier transform
\begin{align*}
  \widehat f(\xi)=\mathcal F_x f(\xi)
  :=(2\pi)^{-d/2}\int_{\R^d}e^{-ix\cdot\xi}f(x)\,\dd x, \quad \forall\,\xi\in \mathbb R^d.
\end{align*}
For the
Kadomtsev--Petviashvili equations,
\begin{equation}\label{eq:KPeq}\partial_t u+\partial_x^3u\mp \partial_x^{-1}\partial_y^2u=0, \qquad t\in \mathbb R, \ z=(x,y)\in \mathbb R^2,\end{equation}
 set the phase
\[
  \Phi_{\KP}(\xi,\eta)
  =\xi^3\pm\frac{\eta^2}{\xi},
  \qquad 
  \zeta=(\xi,\eta)\in\mathbb R^2, \ \xi\ne0,
\]
and define the propagator $U_{\KP}(t)$ by
\begin{equation*}
  \widehat{U_{\KP}(t)f}(\xi,\eta)
  =e^{it\Phi_{\KP}(\xi,\eta)}\widehat f(\xi,\eta).
\end{equation*}
For the Zakharov--Kuznetsov equation ($d\ge2$),
\begin{equation}\label{eq:ZKeq}
  \partial_tu+\partial_x\Delta_z u=0,
  \qquad t\in\R,\ z=(x,y)\in\R\times\R^{d-1},
\end{equation}
with Fourier variables $\zeta=(\xi,\eta)\in\R\times\R^{d-1}$, the phase
and propagator are
\[
  \Phi_{\ZK}(\zeta)=\zeta_1|\zeta|^2=\xi(\xi^2+|\eta|^2),
  \qquad
  \widehat{U_{\ZK}(t)f}(\zeta)
  =e^{it\Phi_{\ZK}(\zeta)}\widehat f(\zeta).
\]
Finally, for the (radial) Schr\"odinger equation
\begin{equation}\label{eq:Scheq}
  i\partial_tu+\Delta_xu=0,
  \qquad t\in\R,\ x\in\R^d,
\end{equation}
the propagator is $U_S(t)=e^{it\Delta}$, so that
\[
  \widehat{U_S(t)f}(\xi)=e^{-it|\xi|^2}\widehat f(\xi),
  \qquad \xi\in\R^d.
\]

We will restrict the Schr\"odinger propagator to
radial initial data, which will admit a larger (radial) range of admissible exponents for Strichartz estimates to hold; see Theorem~\ref{thm:intro-radial} and \ref{thm:intro-known-radial-single}.
The radial subspace of $L^2(\mathbb R^d)$ is
\[
  L^2_{\rad}(\R^d)
  =\{f\in L^2(\R^d):f(x)=F(|x|)\text{ for some function }F\}.
\]
A \emph{radial Bessel family} is a Bessel family whose elements lie in
$L^2_{\rad}(\R^d)$.

For $N>0$, let $P_N$ denote the smooth annular Fourier multiplier
\[
  \widehat{P_Nf}(\zeta)=\psi(\zeta/N)\widehat f(\zeta),
\]
where $\psi\in C_c^\infty(\R^d)$ is a real-valued radial function such that 
\[
 0\le\psi\le1, \qquad \supp\psi\subset\{\zeta:1/2<|\zeta|<2\},
  \qquad
  \psi(\zeta)=1 \ \text{when } \  \frac34\le|\zeta|\le\frac43.
\]
Since $\psi$ is radial, we also write
\begin{equation}\label{eq:cutoff}
  \psi(\zeta)=\chi(|\zeta|),
  \qquad \chi\in C_c^\infty((1/2,2)).
\end{equation}

\subsection{Main results}\label{subsec:intro-main-results}
We now state the  Bessel-family Strichartz estimates for above-mentioned three dispersive models.  
Define the (sharp) system exponent function
\begin{equation}\label{eq:betaq}
  \beta(q):=\frac{2q}{q+1},
  \qquad 1\le q<\infty.
\end{equation}

Besides the typical (fractional) Schr\"odinger, wave and Klein-Gordon equations, we want to consider some different models.
The first theorem is for the Kadomtsev--Petviashvili equations \eqref{eq:KPeq}.
\begin{theorem}[KP]\label{thm:intro-KP}
Let $1\le q<\infty,\, 1<p\le \infty$ be such that 
\[
  \frac1p+\frac1q=1.
\]
Suppose that \(\beta\ge 1\) satisfies
\begin{equation}\label{eq:KP-beta-range}
  \begin{cases}
    \beta \le \beta(q), & 1\le q<3,\\[1.2ex]
    \beta < p, & 3\le q<\infty.
  \end{cases}
\end{equation}
Then for every sequence $\lambda=(\lambda_j)_j\in\ell^\beta$ and every Bessel family $f=(f_j)_j\subset L^2(\R^2)$,
\begin{equation}\label{eq:global-KP-estimate}
  \norm{\sum_j\lambda_j|U_{\KP}(t)f_j|^2}_{L_t^p(\R;L_{z}^q(\R^2))}
  \lesssim C_B(f)^{1-1/\beta}\norm{\lambda}_{\ell^\beta}.
\end{equation}
\end{theorem}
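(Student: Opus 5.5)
The plan is to reduce \eqref{eq:global-KP-estimate} to an orthonormal-type Schatten estimate through a Bessel-family version of the Frank--Sabin duality principle. Then we feed in the KP dispersive estimate by complex interpolation.

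\textbf{Step 1 (Bessel duality).} Set $\gamma=T_f\,\mathrm{diag}(\lambda)\,T_f^*$, so that $\rho_\gamma(t,z)=\sum_j\lambda_j|U_{\KP}(t)f_j|^2$. By duality with $W\in L_t^{p'}L_z^{q'}$,
\[
\int W\rho_\gamma=\Tr\big(\mathrm{diag}(\lambda)\,T_f^*\,\mathcal U^*W\mathcal U\,T_f\big),\qquad \mathcal U f=U_{\KP}(t)f .
\]
Write $A=\mathcal U T_f$. By H\"older in Schatten classes, $|\int W\rho_\gamma|\le\|\lambda\|_{\ell^\beta}\|A^*WA\|_{\Sch^{\beta'}(\ell^2)}$, and $\|A^*WA\|_{\Sch^{\beta'}}=\|\overline{W}^{1/2}A\|_{\Sch^{2\beta'}}^2$.

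To obtain the factor $C_B^{1-1/\beta}$, we interpolate between two endpoints:
\begin{itemize}
\item the trivial $\ell^1$ bound, which needs no orthogonality, $\|\overline W^{1/2}A\|_{\Sch^\infty}^2\le C_B\|\overline W^{1/2}\mathcal U\|^2_{L^2\to L^2}$;
\item the bound using $\|T_f\|_{\ell^2\to L^2}^2=C_B$, namely $\|\overline W^{1/2}\mathcal UT_f\|_{\Sch^{2\beta'}}\le C_B^{1/2}\|\overline W^{1/2}\mathcal U\|_{\Sch^{2\beta'}}$.
\end{itemize}
The second bound by itself would give $C_B$ rather than $C_B^{1-1/\beta}$. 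The improved power comes from an alternative argument. Since $\|T_f^*T_f\|\le C_B$ and the diagonal of $T_f^*T_f$ is $1$, one gets $\|T_f\,\mathrm{diag}(\lambda)T_f^*\|_{\Sch^\beta}\le C_B^{1-1/\beta}\|\lambda\|_{\ell^\beta}$. This follows by interpolating between $\beta=1$ (trace equals $\sum|\lambda_j|$ by normalization) and $\beta=\infty$ (operator norm at most $C_B\|\lambda\|_\infty$), bilinearly in $\lambda$.

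The statement is therefore reduced to the orthonormal-type inequality $\|\rho_\gamma\|_{L^p_tL^q_z}\lesssim\|\gamma\|_{\Sch^\beta}$. By Frank--Sabin duality this is equivalent to
\[
\|W_1\mathcal U\mathcal U^*W_2\|_{\Sch^{\beta'}}\lesssim\|W_1\|_{L^{2p'}_tL^{2q'}_z}\|W_2\|_{L^{2p'}_tL^{2q'}_z}.
\]

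\textbf{Step 2 (Schatten bound for KP).} We prove the reduced inequality by Stein interpolation on the analytic family
\[
T_s=W_1\,|t-t'|_{\ast}^{s}\text{-weighted}\;U_{\KP}(t-t')\,W_2 .
\]
Concretely, we insert the Fourier multiplier $|\xi|^{s}$ or a time kernel $|t|^{-s}$ in $\mathcal U\mathcal U^*=\int U_{\KP}(t-t')\,dt'$. The two endpoints are as follows.
\begin{itemize}
\item On $\mathrm{Re}\,s=0$ the operator is $L^2\to L^2$ bounded, by unitarity and Plancherel.
\item On the other line, where the kernel is $\Sch^2$, we use the KP dispersive bound $\|U_{\KP}(t)\|_{L^1\to L^\infty}\lesssim|t|^{-1}$. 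This follows from the stationary phase of $\xi^3\pm\eta^2/\xi$, which gives decay $|t|^{-1/2}$ in $\eta$ after scaling and $|t|^{-1/2}$ in $\xi$ after weighting. Combined with the Hardy--Littlewood--Sobolev inequality in $t$, it yields a Hilbert--Schmidt bound $\lesssim\|W_1\|_{L^{2\tilde p'}L^2}\|W_2\|_{L^{2\tilde p'}L^2}$.
\end{itemize}
Interpolating gives exactly $\beta'=2q'$ on the scaling line $1/p+1/q=1$, i.e.\ $\beta=\beta(q)=2q/(q+1)$. The HLS condition restricts this to $q<3$; the region $q\ge3$, $\beta<p$, follows from the $q=3$ endpoint interpolated with the trivial $\beta=1$ case. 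Near the diagonal $\beta<p$ is the natural limit, and the $\beta=1$ endpoint at $q=\infty$ is the single-function Strichartz estimate.

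\textbf{Main obstacle.} The delicate step is the analytic family. The KP dispersive estimate involves the singular factor $1/\xi$, so the natural family uses weights $|\partial_x|^{s}$, or the time-decay kernel $(t-t')^{-1+s}$. We must check that its kernel on the Hilbert--Schmidt line is bounded by $|t-t'|^{-\mathrm{Re}\,s}$ uniformly in $z$ with admissible growth in $\mathrm{Im}\,s$. I would first try the $t$-kernel family $\frac{1}{\Gamma(s)}|t|^{s-1}_+$ convolution, as in Frank--Sabin, using the $|t|^{-1}$ dispersive decay.
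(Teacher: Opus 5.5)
Your proposal follows essentially the same route as the paper. The steps match: Bessel-family duality with $\|T_f\,\mathrm{diag}(\lambda)\,T_f^*\|_{\Sch^\beta}\le C_B^{1-1/\beta}\|\lambda\|_{\ell^\beta}$; then a Frank--Sabin time-kernel analytic family driven only by the $|t|^{-1}$ KP dispersive bound (HLS on the $\Sch^2$ line, Hilbert-transform/Plancherel boundedness on the $\Sch^\infty$ line); and finally interpolation with trivial $\ell^1$ bounds.

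The endpoint bookkeeping needs correcting, though.
\begin{itemize}
\item HLS requires the squared-kernel exponent $4-2q'$ to lie in $(0,1)$. So the diagonal bound $\beta=\beta(q)$ comes out directly only for $2<q<3$. It is not available at $q=3$, which your reduction for $q\ge3$ relies on, and it gives nothing for $q<2$.
\item The single-function estimate at $(p,q)=(1,\infty)$ is the excluded Keel--Tao endpoint for $h=1$. It cannot serve as your $\beta=1$ endpoint.
\end{itemize}
The fix is to work from an interior point $q_*\in(2,3)$. Interpolate with the trivial point $(p,q)=(\infty,1)$ to get $1\le q<3$; this works because $1/\beta(q)$ is affine in $1/q$ and equals $1$ at $q=1$. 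Interpolate with trivial $\ell^1$ points at large finite $q$ to get $3\le q<\infty$. That is exactly why only the strict range $\beta<p$ is reached there.
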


\IfFileExists{KPZK.pdf}{
\begin{figure}[ht]
  \centering
  \includegraphics[width=1.0\textwidth]{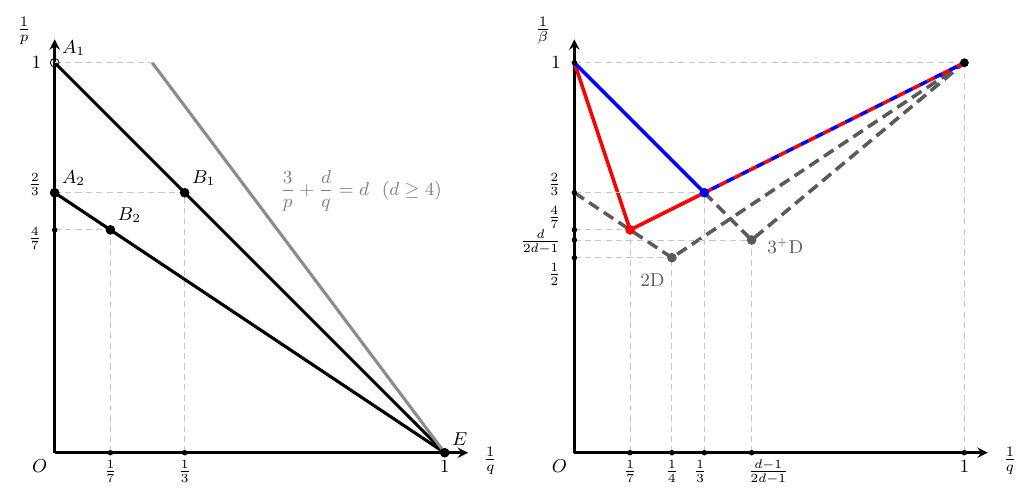}
  \caption{The left panel shows the admissible region for the KP and ZK models. 
  In the right panel, the red and blue lines indicate the sufficient conditions for the $2$D and $3^{\scriptscriptstyle +}$D-ZK models, respectively,
  while the gray dashed lines indicate the (unproved) necessary conditions.
  The blue line also represents the sharp system exponent for the KP model.}
  \label{fig:KP-admissible}
\end{figure}
}{}
\begin{remark}[Sharpness]\label{rem:intro-KP-necessary}
Proposition~\ref{prop:KP-ZK-necessary} \textup{(KP)} shows that for \eqref{eq:global-KP-estimate}
 to hold, it is necessary that 
\[
  \beta\le \beta(q)\quad(1\le q\le3),
  \qquad
  \beta\le p\quad(3\le q<\infty).
\]
Thus, in the subcritical range $1\le q<3$, which corresponds to the segment $(B_1, E]$ in Figure~\ref{fig:KP-admissible}, the system exponent range \eqref{eq:KP-beta-range}
is already sharp.  In the supercritical range $3\le q<\infty$, which corresponds to the segment $[B_1, A_1)$ in Figure~\ref{fig:KP-admissible}, the range \eqref{eq:KP-beta-range} is sharp in the sense
that \eqref{eq:global-KP-estimate} fails whenever $\beta>p$.
\end{remark}

The next theorem is for the Zakharov--Kuznetsov equations \eqref{eq:ZKeq}. 
\begin{theorem}[ZK]\label{thm:intro-ZK}
\emph{(i)} Let $d=2$. Let $1\le q\le\infty,\, 3/2\le p\le \infty$ be such that
\[
  \frac3p+\frac2q=2.
\]
Suppose that \(\beta\ge 1\) satisfies
\begin{equation}\label{eq:2d-ZK-beta-range}
  \begin{cases}
    \beta \le \beta(q), & 1\le q<7,\\[1.2ex]
    \beta < q/(q-3), & 7\le q<\infty,\\[1.2ex]
    \beta=1, & q=\infty.
  \end{cases}
\end{equation}
Then for every sequence $\lambda=(\lambda_j)_j\in\ell^\beta$ and every Bessel family $f=(f_j)_j\subset L^2(\R^2)$,
\begin{equation}\label{eq:2d-zk-strichartz}
  \norm{\sum_j\lambda_j|U_{\ZK}(t)f_j|^2}_{L_t^p(\R;L_z^q(\R^2))}
  \lesssim C_B(f)^{1-1/\beta}\norm{\lambda}_{\ell^\beta}.
\end{equation}
\emph{(ii)} Let $d\ge3$. 
Let $p,q,\beta$ satisfy the same condition as \emph{Theorem~\ref{thm:intro-KP}}. 
Then for every frequency $N>0$, every sequence $\lambda=(\lambda_j)_j\in\ell^\beta$ and every Bessel family $f=(f_j)_j\subset L^2(\R^d)$,
\begin{equation}\label{eq:higer-zk-strichartz}
  \norm{\sum_j\lambda_j|U_{\ZK}(t)P_Nf_j|^2}_{L_t^p(\R;L_z^q(\R^d))}
  \lesssim C_B(f)^{1-1/\beta}
  N^{(d-3)(1-1/q)}\norm{\lambda}_{\ell^\beta}.
\end{equation}
\end{theorem}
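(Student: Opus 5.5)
The plan is to run the Frank--Sabin scheme, upgraded from orthonormal systems to Bessel families. The first ingredient is a Bessel-family Schatten-duality principle: if
\[
\norm{W\,U(t)U(t)^*\,\overline W}_{\Sch^{\beta'}(L^2_{t,z})}\lesssim \norm{W}_{L_t^{2p'}L_z^{2q'}}^2
\]
holds, then the system estimate holds with the factor $C_B(f)^{1-1/\beta}$. To prove this, write $\gamma=\sum_j\lambda_j|f_j\rangle\langle f_j| = T_f D_\lambda T_f^*$, where $D_\lambda$ is the diagonal operator with entries $\lambda_j$. Duality gives
\[
\int W\,\rho_{U\gamma U^*}=\Tr\bigl(D_\lambda\, T_f^* U^* |W|^2 U T_f\bigr).
\]
By H\"older in Schatten classes this is at most $\norm{\lambda}_{\ell^\beta}\norm{T_f^*U^*|W|^2UT_f}_{\Sch^{\beta'}}$. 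Combining $\norm{AB}_{\Sch^r}\le\norm{A}\norm{B}_{\Sch^r}$ with interpolation between the trivial $\Sch^\infty$ bound, which gives $C_B\norm{\cdot}$, and the $\Sch^{1}$-type bound, which uses $\sum_j\norm{Af_j}^2=\norm{AT_f}_{\Sch^2}^2$ and therefore leaves $C_B^0$ after normalization, should produce the exponent $C_B^{1-1/\beta}$. More directly, I would interpolate the bilinear map $\lambda\mapsto\sum_j\lambda_j|Uf_j|^2$ between $\ell^1$, with constant $1$ since each $f_j$ is normalized, and $\ell^{\beta_0}$, with constant $C_B$ from the orthonormal-type duality applied to $\gamma\le C_B\cdot$(a contraction). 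That interpolation yields exactly $C_B^{1-1/\beta}$.

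It then remains to prove the Schatten bound for $WUU^*\overline W$ at the endpoints. I will use complex interpolation on the analytic family
\[
T_s = W\,\bigl(\mathbb 1_{t\ne t'}|t-t'|^{-1+s}\ldots\bigr)\,\overline W,
\]
obtained by inserting $(\pm\partial_x)^{s}$-type or $|D|^{s}$-type multipliers into $U(t-t')$. For KP I will rely on the known dispersive estimate $\norm{U_{\KP}(t)}_{L^1\to L^\infty}\lesssim |t|^{-1}$ in $\R^2$, which is what makes $\frac1p+\frac1q=1$ the scaling line. The $\Sch^2$ endpoint comes from the Hardy--Littlewood--Sobolev inequality in $t$: the kernel decays like $|t-t'|^{-1}$, and the factor $\Gamma$-normalized $|t-t'|^{-1+s}$ with $\mathrm{Re}\, s$ chosen so that $\mathrm{Re}(\cdot)=-1/q$ gives a Hilbert--Schmidt bound, since $\norm{K}_{\Sch^2}$ is the $L^2$ norm of the kernel. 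The $\Sch^\infty$ endpoint, at $\mathrm{Re}\,s=0$, comes from Plancherel, since the multiplier is unimodular. This recovers $\beta=\beta(q)$ for $q<3$, because the HLS condition requires $2/(q)$-type exponents in $(0,1)$. For $q\ge3$, the same argument at the edge of the HLS range degenerates, so instead I will interpolate between the $q<3$ bounds near $q=3$ and the trivial $\beta=1$ bound at $q=\infty$ (the Keel--Tao/single-function estimate). That interpolation gives every $\beta<p$, the strict inequality coming from the loss at the endpoint.

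For ZK with $d=2$, the dispersive decay is $|t|^{-2/3}$, which explains $\frac3p+\frac2q=2$. The same scheme gives $\beta(q)$ up to the threshold $q=7$ where HLS fails, and interpolation with the $q=\infty$ single estimate gives $\beta<q/(q-3)$. For $d\ge3$ the full dispersive decay is not available globally, so I localize with $P_N$. On the annulus, the Hessian of $\xi|\zeta|^2$ has at least two nonvanishing curvatures (Ben-Artzi--Koch--Saut type), which yields $\norm{U_{\ZK}(t)P_N}_{L^1\to L^\infty}\lesssim N^{d-3}|t|^{-1}$ after scaling $N=1$. This is the KP-type decay, and running the KP argument with the extra constant $N^{d-3}$, raised to the power $1-1/q$ by interpolating against the $L^2$ endpoint, gives the stated estimate.

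The main obstacle is establishing the localized dispersive estimate for ZK in $d\ge3$, which needs a uniform stationary phase on degenerate directions of the phase. Close behind is making the analytic family honestly bounded at both endpoints, in particular the frequency-localized version of the fractional multipliers for KP, where $\xi^{-1}$ singularities appear; there I would first attempt to use $|\partial_x|^{s}$ weights.
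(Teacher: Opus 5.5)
\textbf{Overall.} Your architecture is the paper's: a Bessel-family Frank--Sabin duality, a Schatten bound obtained by Stein interpolation from the dispersive estimates ($h=2/3$ for $d=2$; $h=1$ with $C_0\sim N^{d-3}$ for $d\ge3$), then complex interpolation with $\ell^1$ bounds. But the step carrying the new content, the factor $C_B(f)^{1-1/\beta}$, fails as you wrote it.

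\textbf{The power of $C_B$.} After your Schatten--H\"older step you must bound $\norm{T_f^*AT_f}_{\Sch^{\beta'}}$ with $A=U^*|W|^2U$. The map $A\mapsto T_f^*AT_f$ has norm exactly $C_B$ on every $\Sch^r$, including $\Sch^1$. For example, let $f$ be one unit vector $g$ repeated $m$ times and $A=|g\rangle\langle g|$. Then $T_f^*AT_f$ is the all-ones $m\times m$ matrix: rank one, with every Schatten norm equal to $m=C_B$. So there is no ``$C_B^0$'' endpoint for that object, and you end with $C_B^1$.

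Your ``more direct'' route also loses. Interpolating in $\lambda$ at fixed $(p,q)$ between $\ell^1$ (constant $1$) and $\ell^{\beta_0}$ (constant $C_B$) gives $C_B^{\theta}$ with $\theta=(1-1/\beta)/(1-1/\beta_0)>1-1/\beta$. At the sharp exponent $\beta=\beta_0=\beta(q)$ this gives $C_B$, not $C_B^{1-1/\beta(q)}$.

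Your first sketch has the right two endpoints but applies them to the wrong operator; $C_B$ must sit at the $\ell^\infty$ end. The paper interpolates the density operator itself. The operator $\Gamma_{f,\lambda}=T_f\Lambda T_f^*$ satisfies $\norm{\Gamma_{f,\lambda}}_{\Sch^1}\le\norm{\lambda}_{\ell^1}$ by normalization and $\norm{\Gamma_{f,\lambda}}_{\Sch^\infty}\le C_B\norm{\lambda}_{\ell^\infty}$. Hence $\norm{\Gamma_{f,\lambda}}_{\Sch^\beta}\le C_B^{1-1/\beta}\norm{\lambda}_{\ell^\beta}$, and then $|\Tr(A_V\Gamma_{f,\lambda})|\le\norm{A_V}_{\Sch^{\beta'}}\norm{\Gamma_{f,\lambda}}_{\Sch^\beta}$. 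Equivalently, on your dual side, keep only the diagonal $(\langle Af_j,f_j\rangle)_j$, whose $\ell^{\beta'}$ norm is at most $C_B^{1/\beta'}\norm{A}_{\Sch^{\beta'}}$. Once the sharp point carries $C_B^{1-1/\beta(q)}$, interpolation with $\ell^1$ bounds at other $(p,q)$ yields exactly $C_B^{1-1/\beta}$.

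\textbf{Interpolation endpoints.} These also need repair.
\begin{enumerate}
\item The Hilbert--Schmidt kernel at the HLS end is $|t-s|^{2q'-2h-2}$. So HLS only covers $1+1/h<q<(2h+1)/(2h-1)$, i.e.\ $5/2<q<7$ for $d=2$ and $2<q<3$ for $d\ge3$, not all $q$ below the threshold. For $1\le q\le 1+1/h$ you need the trivial $\ell^1$ bound at $(p,q)=(\infty,1)$. This gives exactly $\beta(q)$ because $1/\beta(q)$ is affine in $1/q$.
\item More seriously, in part (ii) ($h=1$) you interpolate with the single-function estimate at $q=\infty$, i.e.\ $(p,q)=(1,\infty)$. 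This is the $L_t^2L_z^\infty$ estimate with $\sigma=1$, the endpoint Keel--Tao excludes; the paper explicitly excludes $(h,p,q)=(1,1,\infty)$. Instead, interpolate the $\beta(q_0)$ bound, with $q_0\uparrow3$, against $\ell^1$ bounds at large finite $q_1$. Since $\beta<p$ is an open condition, this still reaches every $\beta<p$ for $3\le q<\infty$.
\item For $d=2$, your endpoint $(3/2,\infty)$ is admissible since $\sigma=2/3<1$. There your argument correctly gives $\beta<q/(q-3)$ and $\beta=1$ at $q=\infty$.
\end{enumerate}

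\textbf{The analytic family.} Put the complex power in time, $\mathbf 1_{\{|t-s|>\varepsilon\}}(t-s)^{-1-z}U(t-s)PP^*$. Then the $\Sch^2$ end uses only the dispersive bound, and the $\Sch^\infty$ end uses the uniformly bounded truncated Hilbert transforms. Inserting spatial multipliers $|D|^s$ or $|\partial_x|^s$ does not produce these time weights and would need separate oscillatory-integral bounds.

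\textbf{Remaining points.} The factor $N^{(d-3)(1-1/q)}$ follows by rescaling time by $C_0^{1/h}$. Your Hessian-rank argument (rank $\ge2$ on the annulus) is an acceptable substitute for the cited localized dispersive bound. Apply it to the cutoff $\psi^2$, since the relevant operator is $U(t)P_NP_N^*$.
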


\begin{remark}[Sharpness]\label{rem:intro-ZK-necessary}
Proposition~\ref{prop:KP-ZK-necessary}~\textup{($3^{\scriptscriptstyle +}$D-ZK)} shows that the range \eqref{eq:KP-beta-range} in Theorem~\ref{thm:intro-ZK}~\textup{(ii)} is sharp for $3\le q<\infty$ and for $q=1$ (the segment $(A_1,B_1]$ and the point $E$ in Figure~\ref{fig:KP-admissible}).
However, sharpness in the regime $1<q<3$ (the segment $(B_1,E)$) remains unknown, since the necessary condition in Proposition~\ref{prop:KP-ZK-necessary}~\textup{($3^{\scriptscriptstyle +}$D-ZK)} leaves a gap.
For $d=2$, the sharpness of the range \eqref{eq:2d-ZK-beta-range} for \eqref{eq:2d-zk-strichartz} remains unknown, except for $(p,q)=(7/4,7)$ and $(p,q)=(\infty,1)$ (the points $B_2$ and $E$), in view of the necessary condition in Proposition~\ref{prop:KP-ZK-necessary}~\textup{(2D-ZK)}.
\end{remark}

\begin{remark}[Global estimate]
\label{rem:intro-ZK-frequency-global}
For $d\ge4$, the localization restriction of \eqref{eq:higer-zk-strichartz} in Theorem~\ref{thm:intro-ZK}~\textup{(ii)} can
be removed at the cost of Sobolev regularity, by the vector-valued
Littlewood--Paley inequality (\cite[Lemma 1]{sabin16}). 
The global estimate
\begin{equation*}
  \norm{\sum_j\lambda_j|U_{\ZK}(t)f_j|^2}_{L_t^p(\R;L_z^q(\R^d))}
  \lesssim C_B(f)^{1-1/\beta}\norm{\lambda}_{\ell^\beta}
\end{equation*} holds for every Bessel family $ f=(f_j)_j\subset H^s(\R^d)$ with
\[s>(d-3)(1-1/q)/2.
\] 

\end{remark}

For radial improvement for dispersive equations, we consider the radial Schr\"odinger model \eqref{eq:Scheq}. 
Let $d\ge2$ and set
\begin{equation}\label{eq:intro-radial-parameters}
  q_* = \frac{2d+1}{2d-1},
  \qquad
  q_c=\frac{2d-1}{2d-3}.
\end{equation}

\begin{theorem}[Radial Schr\"odinger]
\label{thm:intro-radial}
Let $d\ge2$.  Let $p,q$ satisfy
\begin{equation}\label{eq:radial-boundary-segment}
  1\le q<q_c\,,
  \qquad
  \frac1p+\frac{2d-1}{2q}=d-\frac12\,.
\end{equation}
Suppose that \(\beta\ge 1\) satisfies
\begin{equation}\label{eq:radial-beta-range}
  \begin{cases}
    \beta \le \beta(q), & 1\le q\le q_*\,,\\[1.2ex]
    \beta < 2p/(p+1), & q_*< q<q_c\,.  \end{cases}
\end{equation}
Then for every frequency $N>0$, every sequence $\lambda=(\lambda_j)_j\in\ell^\beta$ and every radial Bessel family  $f=(f_j)_j\subset L^2_{\rad}(\R^d)$, 
\begin{equation}\label{eq:radial-boundary-estimate}
  \norm{\sum_j\lambda_j|U_S(t)P_Nf_j|^2}_{L_t^p(\R;L_x^q(\R^d))}
  \lesssim C_B(f)^{1-1/\beta}
  N^{-(d-1)(1-1/q)}
  \norm{\lambda}_{\ell^\beta}.
\end{equation}

\end{theorem}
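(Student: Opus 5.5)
First reduce to $N=1$ by scaling. If $f_j$ is replaced by $f_j(N\cdot)$ (suitably normalized) and $t$ by $N^2t$, the Bessel bound does not change. The exponent relation $\frac1p+\frac{2d-1}{2q}=d-\frac12$ then produces exactly the factor $N^{-(d-1)(1-1/q)}$, via $N^{d/q}N^{2/p}N^{-d}$ after the $L^2$-normalisation. So it suffices to prove \eqref{eq:radial-boundary-estimate} for $N=1$.

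Next, invoke the Bessel-family Schatten-duality principle (the extension of Frank--Sabin duality announced in the abstract). It reduces \eqref{eq:radial-boundary-estimate} with constant $C_B(f)^{1-1/\beta}$ to the Schatten bound
\[
\|W_1 U_S P_1 \Pi_{\rad} U_S^* P_1 \overline{W_2}\|_{\Sch^{\beta'}}\lesssim \|W_1\|_{L^{2p'}_tL^{2q'}_x}\|W_2\|_{L^{2p'}_tL^{2q'}_x},
\]
where $\Pi_{\rad}$ is the projection onto radial functions. The factor $C_B^{1-1/\beta}$ comes from $\|\sum_j \lambda_j |f_j\rangle\langle f_j|\|_{\Sch^\beta}\le C_B^{1-1/\beta}\|\lambda\|_{\ell^\beta}$. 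This inequality is obtained by interpolating the trivial $\Sch^1$ bound $\|\lambda\|_{\ell^1}$ with the $\Sch^\infty$ bound $\|T_f\,\mathrm{diag}(\lambda)\,T_f^*\|\le C_B\|\lambda\|_\infty$. The operator $T_\lambda = U_S P_1\Pi_{\rad}U_S^*P_1$ has the radial kernel
\[
K(t,r,s)=\int_0^\infty e^{-it\rho^2}\chi(\rho)^2 (r\rho)^{-\frac{d-2}{2}}J_{\frac{d-2}{2}}(r\rho)(s\rho)^{-\frac{d-2}{2}}J_{\frac{d-2}{2}}(s\rho)\rho^{d-1}\,d\rho .
\]

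To obtain the full range I would embed this operator in an analytic family $T_z$, obtained by multiplying the time kernel by $(t-t')^{z}$ or by a fractional $\rho$-weight, and apply Stein interpolation between two endpoints. The first is an $\Sch^2$ (Hilbert--Schmidt) endpoint at $\mathrm{Re}\,z$ negative. There the $\Sch^2$ norm equals $\iint |W_1|^2|K_z|^2|W_2|^2$, and I would control it by a weighted Hardy--Littlewood--Sobolev-type bound. This bound uses radial decay: stationary phase on the Bessel asymptotics $J_\nu(r\rho)\sim (r\rho)^{-1/2}\cos(r\rho-c)$ gives the kernel estimate $|K(t,r,s)|\lesssim (rs)^{-(d-1)/2}\langle t\rangle^{-1/2}$, together with the usual $|t|^{-d/2}$ bound when $r,s\lesssim |t|$. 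In effect this is a one-dimensional dispersive estimate with weight $(rs)^{-(d-1)/2}$, which is what enlarges the admissible range up to $q_c$. The second is an $\Sch^\infty$ endpoint at $\mathrm{Re}\,z=0$, which follows from Plancherel in $t$ since $P_1$ localises frequency.

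Interpolating along the line gives $\beta=\beta(q)$ up to $q_*$. For $q_*<q<q_c$, I would instead interpolate between the sharp point at $q=q_*$ and the single-function (or $\beta=1$) radial Strichartz estimate near $q_c$ (Theorem~\ref{thm:intro-known-radial-single}). The resulting exponent is an open condition, which is why the range is stated as $\beta<2p/(p+1)$. Alternatively, I would run the multilinear product trace argument for even $\beta'$, as the abstract suggests.

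The main obstacle is the Hilbert--Schmidt endpoint. It requires a multilinear weighted fractional integral inequality in the radial variable,
\[
\iint\!\!\iint |W_1(t,r)|^2\frac{(rs)^{-(d-1)}}{|t-t'|^{a}}|W_2(t',s)|^2 r^{d-1}s^{d-1}\,dr\,ds\,dt\,dt'\lesssim\prod\|W_i\|^2,
\]
and it also requires verifying the uniform kernel bound near $r\sim s\sim|t|$, where the Bessel asymptotics degenerate. I would handle that region by a dyadic decomposition in $r,s$ and $t$ and sum the pieces using the strict inequality $q<q_c$.
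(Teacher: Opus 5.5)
\textbf{There is a genuine gap.} Your scaling reduction, the Bessel-family duality with $\norm{\sum_j\lambda_j|f_j\rangle\langle f_j|}_{\Sch^\beta}\le C_B^{1-1/\beta}\norm{\lambda}_{\ell^\beta}$, and the final interpolation with the $\ell^1$ bounds from Theorem~\ref{thm:intro-known-radial-single} all match the paper. The gap is in the one step that carries the content: the sharp Schatten bound at the diagonal point $p=q=q_*$, namely $\norm{A^{\rad}_{V,1}}_{\Sch^{2d+1}}\lesssim\norm{V}_{L^{q_*'}_{t,x}}$. Everything for $1\le q\le q_*$ follows from this bound by interpolation with $q=1$.

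\emph{Why the proposed route fails.} A Frank--Sabin Stein interpolation between $\Sch^\infty$ at $\Re z=0$ and $\Sch^2$ at $\Re z=-\lambda$ yields $\Sch^{2\lambda}$ at $z=-1$, so you are forced to take $\lambda=(2d+1)/2$. The $\Sch^2$ endpoint must then be measured in $L^2_{t,x}$; moving spatial weights onto it would force multipliers growing in $r$ at the $\Sch^\infty$ endpoint. At that endpoint, the time factor $|t-t'|^{\lambda-1}$ times the exterior kernel, which has size $(rs)^{-(d-1)/2}|t-t'|^{-1/2}$ where $|r-s|\approx 2|t-t'|$, gives a Hilbert--Schmidt integrand of size $|t-t'|^{2d-2}$ with respect to $dr\,ds\,dt\,dt'$. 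The factor $(rs)^{-(d-1)}$ cancels the measure $r^{d-1}s^{d-1}$ exactly, and what remains grows. Take $W_1,W_2$ to be indicators of unit boxes near $(t,r)=(0,R_0)$, with $R_0$ a large fixed constant, and near $(T,2T)$. The left side is then $\gtrsim T^{2d-2}$, while $\norm{W_1}_{L^2}^2\norm{W_2}_{L^2}^2\approx R_0^{d-1}T^{d-1}$. So the endpoint bound fails for every $d\ge2$. The same test rules out every point $1<q\le q_*$ of the radial line, where $\lambda=q'\ge(2d+1)/2$.

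Similarly, after this cancellation your displayed weighted inequality is just a time-HLS bound for $\norm{W_i(t)}_{L^2(dr)}$. At best it reproduces the one-dimensional exponents $2/p+1/q=1$, $q>3\ge q_c$, which lie off the radial line. The underlying reason is that the Hilbert--Schmidt norm sees only $|K|^2$. It therefore cannot register the oscillation in $r-s$ that produces the radial gain, which is why the paper abandons the scheme of Lemma~\ref{thm:abstract-bessel-dispersive} here.

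\emph{What the paper does instead.} The missing idea is to work with higher traces, with the radial variable as the outer variable.
\begin{enumerate}[label=\textup{(\arabic*)}]
\item In the one-dimensional Hankel picture, split $M_{V^{1/2}}\mathcal E=X_<+X_++X_-$. Treat $X_\sigma X_\sigma^*$ as an integral operator in $r$ whose kernel $D^\sigma_{r,y}$ is an operator on $L^2_t$.
\item For fixed radii, prove $\norm{D^\sigma_{r,y}}_{\Sch^\gamma(L^2_t)}\lesssim\langle r-y\rangle^{-1/\gamma}F(r)^{1/2}F(y)^{1/2}$ with $F(r)=\norm{V(\cdot,r)}_{L^{q_*'}_t}$. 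This is Lemma~\ref{lem:radial-time-schatten}: van der Corput in $\mu=\rho^2$ on the phase $(r-y)\sqrt\mu$, interpolated with Plancherel.
\item Compute $\Tr\bigl((X_\sigma X_\sigma^*)^{2d+1}\bigr)$ exactly by the cyclic trace formula (Lemma~\ref{lem:operator-valued-cyclic-trace}). This bounds it by $\int\prod_jF(r_j)\prod_j|r_j-r_{j+1}|^{-1/(2d+1)}\prod_j dr_j$.
\item Close with the weighted multilinear fractional integral (Lemma~\ref{lem:radial-cyclic-weighted}) with $\theta=2(d-1)/(2d+1)$. The identity $\theta q_*'=d-1$ holds only at $p=q=q_*$, and it is exactly what restores the measure $r^{d-1}dr$.
\item Handle the piece $r\le 8$ separately by Kato--Seiler--Simon. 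The Bessel asymptotics degenerate at small $r\rho$, not near $r\sim s\sim|t|$.
\end{enumerate}

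Your fallback, a multilinear trace argument \emph{for even $\beta'$}, points in this direction but is not yet a plan. The required exponent is the odd integer $2d+1$; evenness is unnecessary since $X_\sigma X_\sigma^*\ge0$. You also do not identify either ingredient that makes the trace finite with the right weight: the $\langle r-y\rangle^{-1/\gamma}$ time-Schatten decay for fixed radii, and the weighted cyclic fractional integral inequality.
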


\IfFileExists{radial_combined.pdf}{
\begin{figure}[ht]
  \centering
  \includegraphics[width=1.0\textwidth]{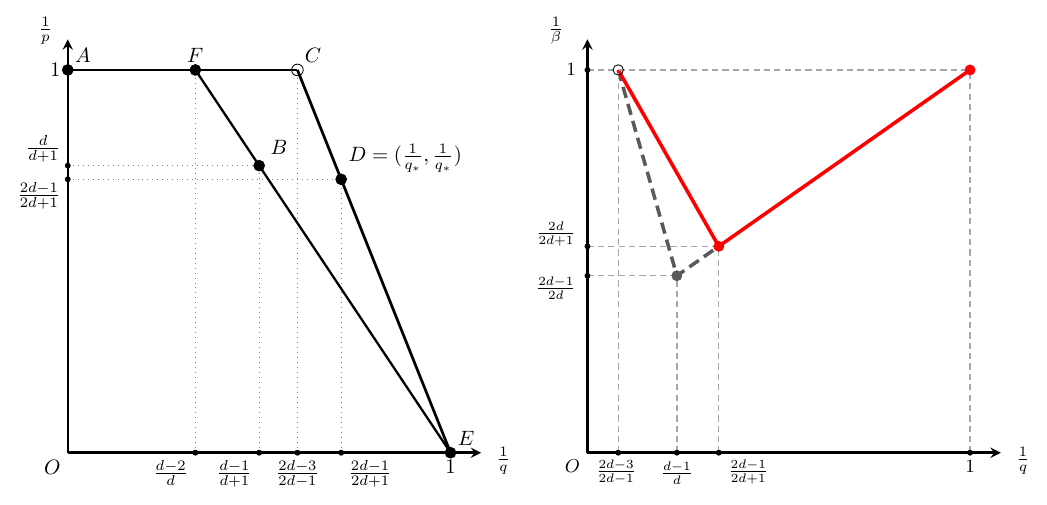}
  \caption{The left panel shows the admissible region for the radial Schr\"odinger model. 
  In the right panel, the red line represents the sufficient condition for the radial Schr\"odinger model, while the gray dashed line represents the (unproved) necessary condition.}
  \label{fig:radial-admissible}
\end{figure}
}{}

\begin{remark}[Sharpness]
\label{rem:intro-radial-necessary}
Proposition~\ref{prop:radial-necessary} shows that valid
$\beta$ for 
\eqref{eq:radial-boundary-estimate} to hold satisfy
\[
  \frac1\beta\ge \frac1p+\frac d q-(d-1), 
\]
which, on the full boundary \eqref{eq:radial-boundary-segment}, reduces to
$\beta\le\beta(q)$.  Hence the range
\eqref{eq:radial-beta-range} is sharp for $1\le q\le q_*$ whereas sharpness for $q_*<q<q_c$  remains open; this corresponds to segments $[D,E]$ and $(D,C)$, respectively, in Figure~\ref{fig:radial-admissible}.
\end{remark}

\begin{remark}
The radial improvement can be generalized to a class of dispersive models, e.g., fractional Schr\"odinger equations; see \cite{GuoWangRadial} for single-function result. However, we cannot get global estimate from the localized one, due to the negative power in frequency. 
\end{remark}

\begin{remark}
We may consider the Bessel-family Strichartz estimates with extra regularity of initial data or extra power of frequency over the full admissible regions for all three aforementioned and more other models; see e.g., \cite{MR3985036,JXZ}. Here we list only results on the sharp boundary lines, leaving the non-sharp regions as exercises.
\end{remark}

\subsection{Classical single-function results}
\label{subsec:known-single-function-estimates}

In this subsection, we review some  classical single-function Strichartz estimates for the three dispersive models in the whole admissible regions.

The analysis of KP equations by Fourier restriction methods goes back, 
in particular, to Bourgain's work \cite{BourgainKP} on the periodic KP-II equation. 
For the Euclidean setting, Hadac \cite{HadacKP} proved Strichartz estimates for dispersion-generalized KP-II equations.
For the KP-I equation, Molinet--Saut--Tzvetkov
\cite{MSTBackgroundKPI} obtained Strichartz estimates in the study of
KP-I solutions with nonlocalized backgrounds.
More recently, Guo--Molinet \cite{GuoMolinetKPI} obtained refined Strichartz estimates for KP-I solutions.
Linear Strichartz estimates for other KP-type models can be found, for
example, in \cite{PilodSautSelbergTesfahun,KinoshitaSanwalSchippa}.  
The following theorem combines the KP-I
estimate from \cite[Lemma~4]{MSTBackgroundKPI} and the KP-II estimate from
\cite[Theorem~3.1]{HadacKP}.

\begin{theorem}
\label{thm:intro-known-KP-single}
Let $1\le p\le \infty,\, 1\le q<\infty$ such that $1/p+1/q\le1$. Then for every $f\in \dot H^{\sigma_{\KP}(p,q)}(\mathbb R^2)$ with $\sigma_{\KP}(p,q):=1-1/p-1/q$, 
\begin{equation*}
  \norm{U_{\KP}(t)f}_{
    L_t^{2p}(\R;L_{x,y}^{2q}(\R^2))}
  \lesssim \norm{f}_{\dot H^{\sigma_{\KP}(p,q)}(\R^2)}.
\end{equation*}
\end{theorem}

For the Zakharov--Kuznetsov equation in two dimensions, 
the general dispersive theory of Ben-Artzi--Koch--Saut \cite{BenArtziKochSaut} 
for third-order equations applies to the linear ZK propagator. 
Linares--Pastor \cite{LinaresPastor2009,LinaresPastor2011} obtained Strichartz estimates 
in the study of generalized ZK equations, while bilinear refinements were 
subsequently developed by Gr\"unrock--Herr \cite{GrunrockHerrZK} and Molinet--Pilod \cite{MolinetPilod}, 
and further sharpened by Kinoshita \cite{KinoshitaZK}. 
Shan--Wang--Zhang \cite{ShanWangZhangZK} also used bilinear
Strichartz estimates in the study of global well-posedness for the
two-dimensional ZK equation.
In higher dimensions, Herr--Kinoshita \cite{HerrKinoshitaZK} obtained endpoint Strichartz estimates, 
while Linares--Ramos \cite{LinaresRamosZK} established a family of Strichartz estimates for the ZK propagator.

\begin{theorem}
\label{thm:intro-known-ZK-single}
Let $p, q \ge 1$ such that
\begin{equation*}
  \begin{cases}
  3/p+2/q\le2,  \quad  (p,q)\neq (\infty,\infty), & \quad d=2\,,\\
   1/p+1/q\le1, \quad  q<\infty, & \quad d\ge 3\,.  \end{cases}
\end{equation*} 
Then for every $f\in H^{\sigma_{\ZK}(p,q)}(\mathbb R^d)$ with $\sigma_{\ZK}(p,q):=\left[d(1-1/q)-3/p\right]/2$,
\begin{equation*}
  \norm{U_{\ZK}(t)f}_{
    L_t^{2p}(\R;L_z^{2q}(\R^d))}
  \lesssim
  \norm{f}_{H^{\sigma_{\ZK}(p,q)}(\R^d)}.
\end{equation*}

\end{theorem}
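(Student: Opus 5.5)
The statement is a single-function estimate, so I would follow the standard route: a frequency-localized dispersive estimate, then the Keel--Tao $TT^*$ machinery on the sharp line, then Sobolev embedding and Littlewood--Paley summation to fill in the rest of the region. By the scaling $U_{\ZK}(t)P_N f(z)=[U_{\ZK}(N^3t)P_1 f(N^{-1}\cdot)](Nz)$, it suffices to treat $N=1$ and rescale. The goal is the decay bound
\[
\norm{U_{\ZK}(t)P_N}_{L^1\to L^\infty}\lesssim N^{d-3\sigma_d}|t|^{-\sigma_d},
\qquad \sigma_2=\tfrac23,\quad \sigma_d=1\ (d\ge3).
\]

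\emph{Case $d=2$.} The linear change of variables $(x,y)\mapsto(\alpha x+\beta y,\alpha x-\beta y)$ with suitable constants $\alpha,\beta$ turns $\xi^3+\xi\eta^2$ into a multiple of $\xi_1^3+\xi_2^3$. The kernel then factorizes into a product of two Airy kernels, which gives the global bound $|t|^{-2/3}$ with no localization needed. Keel--Tao with $\sigma=2/3$ yields $L^2\to L^{2p}_tL^{2q}_z$ on the line $\frac{1}{2p}+\frac{2/3}{2q}=\frac13$, i.e.\ $3/p+2/q=2$, where $\sigma_{\ZK}=0$. The endpoint $2p=2$ is allowed because $\sigma<1$.

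For $3/p+2/q<2$, I would fix $p$ and pick $q_0\le q$ with $3/p+2/q_0=2$. Sobolev embedding $\dot H^{s}_z$-type $L^{2q_0}\to L^{2q}$ costs $d(\frac1{2q_0}-\frac1{2q})=\sigma_{\ZK}(p,q)$ derivatives, which commute with $U_{\ZK}$. This loses the pair $(\infty,\infty)$, which is excluded in the statement. Inhomogeneous $H^\sigma$ suffices, since low frequencies are handled by Bernstein.

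\emph{Case $d\ge3$.} The Hessian of $\Phi_{\ZK}$ is
\[
\begin{pmatrix}6\xi & 2\eta^T\\ 2\eta & 2\xi I\end{pmatrix}.
\]
On $\{1/2<|\zeta|<2\}$ it always has rank $\ge2$: if $\xi\ne0$ the lower block is nondegenerate, and if $\xi=0$ then $|\eta|\sim1$ gives a rank-$2$ block. Hence on the annulus one can always find two directions with a uniformly nondegenerate $2\times2$ Hessian minor. After a partition of unity in the annulus, I would apply two-dimensional stationary phase in those two variables and integrate trivially in the remaining $d-2$ variables. This gives $\norm{U(t)P_1}_{L^1\to L^\infty}\lesssim|t|^{-1}$, and by scaling $N^{d-3}|t|^{-1}$.

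Keel--Tao with $\sigma=1$ then gives the localized estimate on $1/p+1/q=1$ with loss $N^{(d-3)(1-1/q)/2\cdot}$ matching $\sigma_{\ZK}$. Here $p=1$ would force $q=\infty$, which is excluded, so only non-endpoint cases arise. I would sum over dyadic $N$ via the Littlewood--Paley square function in $L^{2q}_z$ ($2\le 2q<\infty$) and Minkowski's inequality, since $2p,2q\ge2$. Sobolev embedding again covers $1/p+1/q<1$, and $\sigma_{\ZK}>0$ keeps the low-frequency part harmless.

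\emph{Main obstacle.} The key step is proving the $d\ge3$ dispersive bound uniformly near the degenerate set $\xi\approx0$, where the partition must match the varying rank-$2$ directions. I would first rotate in $\eta$ so that $\eta=(|\eta|,0,\dots,0)$ and use the minor in $(\xi,\eta_1)$. Its determinant $12\xi^2-4\eta_1^2$ can vanish, though, so I would then switch to the minor in $(\eta_1,\eta_2)$, which equals $(2\xi)^2$, where $\xi$ is away from $0$. The Ben-Artzi--Koch--Saut theory handles exactly this kind of patching, and I would invoke it if the direct argument becomes messy.
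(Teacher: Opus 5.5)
Your route differs from the paper's. The paper simply quotes \cite[Lemma~2.1]{LinaresPastorDrumondSilva} plus Sobolev embedding for $d=2$, and \cite[Proposition~1.3]{LinaresRamosZK} (run on all of $\R_t$) for $d\ge3$. You instead rebuild these from dispersive bounds, and most of that is sound.

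For $d\ge3$, the rank-two Hessian argument gives the $|t|^{-1}$ bound: use the $(\eta_1,\eta_2)$-minor $4\xi^2$ where $|\xi|\gtrsim1$, and the $(\xi,\eta_1)$-minor after a local rotation in $\eta$ where $|\xi|\ll1$. Keel--Tao, Littlewood--Paley in $L^{2q}_z$ with $q<\infty$, Minkowski and Sobolev then cover the whole region.

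For $d=2$, the Airy factorization and Keel--Tao give the sharp line $3/p+2/q=2$. One small slip: on that line $p\ge3/2$, so the relevant endpoint is $(2p,2q)=(3,\infty)$, not $2p=2$. It is admissible because Keel--Tao only excludes $(2,\infty)$ at decay rate $1$.

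The gap is the $d=2$ Sobolev step when $q=\infty$. For $3/2<p<\infty$ your reduction needs $\dot W^{s,2q_0}(\R^2)\hookrightarrow L^\infty(\R^2)$ with $s=1/q_0=2/(2q_0)$ and $2<2q_0<\infty$. This is the critical case of Sobolev embedding, which is false. So you lose the whole edge $\{q=\infty,\ 3/2<p<\infty\}$, which the statement includes, not only $(\infty,\infty)$. Bernstein at each dyadic scale does not rescue it. It gives a factor $N^{\sigma}$ at frequency $N$, with $\sigma=1-\frac{3}{2p}$, and summing in $N$ only controls the Besov norm $B^{\sigma}_{2,1}$, not $H^\sigma$.

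One way to close the gap is a direct $TT^*$ argument for $f\mapsto U_{\ZK}(t)|\nabla|^{-\sigma}f$.
\begin{itemize}
\item Since $\Phi_{\ZK}(\lambda\zeta)=\lambda^3\Phi_{\ZK}(\zeta)$, the kernel of $U_{\ZK}(\tau)|\nabla|^{-2\sigma}$ equals $|\tau|^{-(2-2\sigma)/3}K_\pm(|\tau|^{-1/3}z)=|\tau|^{-1/p}K_\pm(|\tau|^{-1/3}z)$.
\item Decompose $K_\pm$ dyadically. The trivial bound gives $N^{2-2\sigma}$ for $N\le1$, and the frequency-localized $|t|^{-2/3}$ bound gives $N^{-2\sigma}$ for $N>1$. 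Hence $\norm{K_\pm}_{L^\infty}<\infty$, because $0<\sigma<1$.
\item Hardy--Littlewood--Sobolev in $t$ then maps $L^{(2p)'}_tL^1_z$ to $L^{2p}_tL^\infty_z$. This yields $\norm{U_{\ZK}(t)f}_{L^{2p}_tL^\infty_z}\lesssim\norm{f}_{\dot H^{\sigma}}$ on that edge.
\end{itemize}
The paper's one-line appeal to Sobolev embedding for $d=2$ meets the same obstruction on this edge, unless the cited lemma already contains these estimates.
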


For $d=2$, Theorem~\ref{thm:intro-known-ZK-single} follows from \cite[Lemma~2.1]{LinaresPastorDrumondSilva} together with Sobolev embedding. 
For $d\ge3$, Theorem~\ref{thm:intro-known-ZK-single} follows from \cite[Proposition~1.3]{LinaresRamosZK}. 
Although \cite[Proposition~1.3]{LinaresRamosZK} is stated on the fixed time interval $[0,1]$, 
its proof can apply on the whole time axis. Hence the same estimate holds globally in time.

For the Schr\"odinger equation, the classical Strichartz estimates go
back to Strichartz \cite{Strichartz1977}, and the endpoint estimate was
established by Keel--Tao \cite{KeelTao}.  Under radial symmetry, Shao
\cite{ShaoRadial} proved sharp adjoint restriction estimates for
cylindrically symmetric functions on the paraboloid.  Guo--Wang
\cite{GuoWangRadial} obtained the radial Strichartz range
for a class of dispersive equations including the Schr\"odinger
case. Subsequently, Ke \cite{KeRadial} improved upon the result of Guo--Wang. 

\begin{theorem}
\label{thm:intro-known-radial-single}
Let  $p,q\ge1$ such that $(p,q)\neq (1,q_c)$ and 
\[\frac1p+\frac{2d-1}{2q}\le d-\frac12.\]
Then for every $N>0$ and every $f\in L^2_{\rad}(\mathbb R^d)$
\begin{equation*}
  \norm{U_S(t)P_Nf}_{
    L_t^{2p}(\R;L_x^{2q}(\R^d))}
  \lesssim
  N^{\frac12(d-\frac2p-\frac d q)}\norm{f}_{L^2(\R^d)}.
\end{equation*}
\end{theorem}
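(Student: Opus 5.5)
This theorem is a repackaging of known results, so the plan is to reduce it to the sharp boundary estimates of Guo--Wang \cite{GuoWangRadial} and Ke \cite{KeRadial}, and to reach the rest of the region by scaling and Bernstein's inequality.

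\emph{Reduction to $N=1$.} Since $e^{it\Delta}$ and $P_N$ commute with dilations and preserve radiality, set $f^{N}(x)=N^{-d/2}f(x/N)$, which has the same $L^2$ norm as $f$. Then
\[
  U_S(t)P_Nf(x)=N^{d/2}\,\bigl(U_S(N^2t)P_1f^{N}\bigr)(Nx).
\]
Taking the $L^{2p}_tL^{2q}_x$ norm produces exactly the factor
\[
  N^{d/2-1/p-d/(2q)}=N^{\frac12(d-\frac2p-\frac dq)}.
\]
It therefore suffices to prove
\[
  \|U_S(t)P_1f\|_{L^{2p}_tL^{2q}_x}\lesssim\|f\|_{L^2}
\]
for radial $f$ and all $(p,q)$ in the stated region.

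\emph{The boundary line.} On the line $\frac1p+\frac{2d-1}{2q}=d-\frac12$ with $(p,q)\neq(1,q_c)$, this is precisely the radial Strichartz estimate of Guo--Wang \cite{GuoWangRadial} in the non-endpoint range, as extended by Ke \cite{KeRadial}. I would cite it rather than reprove it. For orientation, the mechanism is as follows. For radial data at unit frequency one writes
\[
  U_S(t)P_1f(x)=\int_{1/2}^{2}e^{-it\rho^2}J_{\frac{d-2}{2}}(\rho|x|)(\rho|x|)^{-\frac{d-2}{2}}\,\widehat f(\rho)\,\chi(\rho)\rho^{d-1}\,d\rho .
\]
The Bessel asymptotics then give the extra decay $|x|^{-(d-1)/2}$ for $|x|\gtrsim1$, which turns the problem into an essentially one-dimensional oscillatory problem in $(t,|x|)$ with weight $r^{d-1}$. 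A $TT^*$ argument combined with a weighted one-dimensional fractional-integration/restriction estimate produces the critical line $\frac1p+\frac{2d-1}{2q}=d-\frac12$. The region $|x|\lesssim1$ is handled trivially by local smoothness.

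\emph{The interior.} Fix $p$ and let $q_0\le q$ be the exponent on the boundary line with the same $p$. Here $q_0$ exists as long as $p\ge1$ is compatible with the line, since $1\le q_0<q_c$ corresponds to $\infty\ge p>1$. Since $P_1f$ has frequency support in $\{|\xi|\le2\}$, Bernstein's inequality gives
\[
  \|U_S(t)P_1f\|_{L^{2q}_x}\lesssim\|U_S(t)\widetilde P_1P_1f\|_{L^{2q_0}_x}.
\]
Here $\widetilde P_1$ is a slightly fattened projection, which commutes with $U_S$ and is bounded on $L^2$. Integrating in time and applying the boundary estimate yields the claim.

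The endpoint $p=\infty$ is handled separately by unitarity together with Bernstein from $L^2$. The case $p=1$ requires $q>q_c$, and these points are strictly inside the region. For them I would interpolate, or apply Bernstein from a boundary point $(p',q_0)$ with $p'$ slightly larger combined with Hölder in time; since this fails on $\R$, I would instead use the boundary estimate with time exponent $2$ at a nearby $q_0<q_c$. That still leaves a gap at $p=1$, so I would check that Guo--Wang state the full closed region minus the single excluded point and rely on their statement there.

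\emph{Main obstacle.} The main obstacle is the sharp boundary line near $q_c$, which is where Ke's improvement enters. This is genuinely nontrivial, and I would quote it rather than reprove it.
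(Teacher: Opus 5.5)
Your proposal is essentially the paper's argument: the paper proves this theorem purely by citation, deducing it from \cite[Theorem~1.1]{KeRadial} and \cite[Theorem~1.5]{GuoWangRadial}. Your scaling reduction to $N=1$ and the Bernstein step for $1<p\le\infty$ are correct bookkeeping on top of that citation.

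The one thing to tidy is the segment $p=1$, $q>q_c$. The sharp line meets the edge $1/p=1$ of the parameter domain only at the excluded point $(1,q_c)$, so there is no ``boundary estimate with time exponent $2$ at a nearby $q_0<q_c$''. None of the elementary tools reaches these points from the sharp-line estimates with $p>1$: Bernstein keeps $p$ fixed, interpolation would need two points already on $1/p=1$, and H\"older in time is unavailable on $\R$. These points must therefore be taken directly from the cited Guo--Wang theorem, as you ultimately propose and as the paper does.
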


Theorem~\ref{thm:intro-known-radial-single} follows from
\cite[Theorem~1.1]{KeRadial} and \cite[Theorem~1.5]{GuoWangRadial}, while the sharpness of the radial boundary
was explained in  \cite{GuoWangRadial}.

\subsection{Organization of the paper}

Section~\ref{sec:abstract} proves the general Bessel-family duality principle and 
the dispersive-to-Strichartz framework.  
Section~\ref{sec:KP} applies this
framework to the Kadomtsev--Petviashvili and Zakharov--Kuznetsov equations, 
and establishes the corresponding necessary conditions on the system exponents.
Finally, Section~\ref{sec:radial} treats the radial Schr\"odinger case by directly 
proving a robust Schatten estimate at the extended diagonal endpoint,
followed by interpolation and necessary conditions.

\section{Schatten duality and dispersive transference for Bessel families}\label{sec:abstract}

\subsection{Duality principle for Bessel families}
\label{subsec:almost-extension}

Throughout this subsection, $X$ is a $\sigma$-finite measure space.  Let
$E$ be a linear operator on $L^2(X)$ such that $Ef$ is measurable on
$\R\times X$ for every $f\in L^2(X)$.  For a test measurable function $V$ on
$\R\times X$, define $A_V$ to be a linear operator on $L^2(X)$ such that the following integral is valid for $f,g\in L^2(X)$ \begin{equation*}
  \langle A_Vf,g\rangle_{L^2(X)}
  :=\int_{\R\times X}V(t,x)\,Ef(t,x)\,\overline{Eg(t,x)}\,\dd x\dd t.
\end{equation*}

We first extend the Frank--Sabin duality principle \cite[Lemma~3]{FrankSabin} from orthonormal systems
to Bessel families.

\begin{lemma}
\label{thm:intro-abstract}
Let $1<p,q<\infty$ and $1\le\beta<\infty$.  Then the following two statements are equivalent, with the
same constant $C$.
\begin{enumerate}[label=\textup{(\roman*)}]
\item For every 
$V\in L_t^{p'}(\R;L_x^{q'}(X))$,
\begin{equation*}
  \norm{A_V}_{\Sch^{\beta'}(L^2(X))}
  \le C\,\norm{V}_{L_t^{p'}(\R;L_x^{q'}(X))}.
\end{equation*}

\item For every Bessel family $f=(f_j)_j\subset L^2(X)$ and every sequence $\lambda=(\lambda_j)_j\in\ell^\beta$,
\begin{equation}\label{eq:intro-bessel-conclusion}
  \norm{\sum_j\lambda_j|Ef_j|^2}_{L_t^p(\R;L_x^q(X))}
  \le C\,C_B(f)^{1-1/\beta}\,\norm{\lambda}_{\ell^\beta}.
\end{equation}
\end{enumerate}
\end{lemma}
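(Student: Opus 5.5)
The plan is to follow the Frank--Sabin duality argument \cite[Lemma~3]{FrankSabin} for orthonormal systems, replacing orthonormality by the Bessel operator $T_f$ of Remark~\ref{rem:intro-bessel-normalization}. The basic identity is
\[
  \int_{\R\times X}V\sum_j\lambda_j|Ef_j|^2\,\dd x\dd t
  =\sum_j\lambda_j\langle A_Vf_j,f_j\rangle
  =\Tr\bigl(A_V\,\gamma\bigr),
  \qquad \gamma:=\sum_j\lambda_j|f_j\rangle\langle f_j|=T_fD_\lambda T_f^*,
\]
where $D_\lambda$ is the diagonal operator on $\ell^2$ with entries $\lambda_j$. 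To justify it, I will work first with finitely supported $\lambda$ and then use monotone convergence/density.

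For (i)$\Rightarrow$(ii), by duality of $L_t^p L_x^q$ with $L_t^{p'}L_x^{q'}$ (recall $1<p,q<\infty$), it suffices to bound $|\Tr(A_V\gamma)|\le\|A_V\|_{\Sch^{\beta'}}\|\gamma\|_{\Sch^\beta}$ by Hölder for Schatten classes. The key estimate is then
\[
  \|T_fD_\lambda T_f^*\|_{\Sch^\beta}\le C_B(f)^{1-1/\beta}\|\lambda\|_{\ell^\beta}.
\]
I will prove this by complex interpolation in $\beta$. At $\beta=1$, the triangle inequality together with $\|f_j\|=1$ gives $\|\gamma\|_{\Sch^1}\le\|\lambda\|_{\ell^1}$. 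At $\beta=\infty$, $\|T_fD_\lambda T_f^*\|\le\|T_f\|^2\|\lambda\|_{\ell^\infty}=C_B\|\lambda\|_{\ell^\infty}$. Interpolating the map $\lambda\mapsto T_fD_\lambda T_f^*$ from $\ell^\beta$ to $\Sch^\beta$, using the standard Stein analytic family $\lambda_j(z)=|\lambda_j|^{\beta(1-z)}\operatorname{sgn}\lambda_j$ paired with a trace-dual element, yields exactly the constant $C_B^{1-1/\beta}$.

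An alternative route avoids interpolation. Write $\gamma=(T_f|D_\lambda|^{1/2})\,\operatorname{sgn}(D_\lambda)\,(|D_\lambda|^{1/2}T_f^*)$, so that $\|\gamma\|_{\Sch^\beta}\le\|T_f|D_\lambda|^{1/2}\|_{\Sch^{2\beta}}^2$. Then $\|T_f|D|^{1/2}\|_{\Sch^{2\beta}}^2=\||D|^{1/2}T_f^*T_f|D|^{1/2}\|_{\Sch^\beta}$, and this can be bounded by interpolating the same two endpoints. I expect the interpolation step to be the main technical point, although it is standard. Taking the supremum over $V$ then gives \eqref{eq:intro-bessel-conclusion} with constant $C$.

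For (ii)$\Rightarrow$(i), I will use that orthonormal families are Bessel with $C_B=1$, so (ii) contains the orthonormal estimate with the same constant $C$. Frank--Sabin's converse then applies verbatim. For $V\ge0$ the operator $A_V$ is self-adjoint and nonnegative, so by spectral decomposition and Schatten duality,
\[
  \|A_V\|_{\Sch^{\beta'}}=\sup\Bigl\{\Bigl|\sum_j\lambda_j\langle A_Vf_j,f_j\rangle\Bigr| : (f_j)\text{ orthonormal},\ \|\lambda\|_{\ell^\beta}\le1\Bigr\},
\]
and each term in the supremum is at most $\|V\|_{L^{p'}L^{q'}}\cdot C$ by Hölder and (ii). For general complex $V$, I will argue as Frank--Sabin do: use $\|A\|_{\Sch^{\beta'}}=\sup|\Tr(AB)|$ with $B$ of finite rank, $\|B\|_{\Sch^\beta}\le1$, $B=\sum_j\mu_j|g_j\rangle\langle h_j|$ by singular value decomposition. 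Polarization of $\langle A_Vg_j,h_j\rangle$ would lose constants, so the care needed here is to obtain the same constant. Following their lemma, I will instead write $\Tr(A_VB)=\int V\rho_B$ with $\rho_B=\sum_j\mu_j\,Eg_j\,\overline{Eh_j}$ and bound $|\rho_B|\le\bigl(\sum_j\mu_j|Eg_j|^2\bigr)^{1/2}\bigl(\sum_j\mu_j|Eh_j|^2\bigr)^{1/2}$ by Cauchy--Schwarz. Hölder in $L^pL^q$ then gives two copies of (ii) applied to the orthonormal families $(g_j)$ and $(h_j)$ with weights $\mu\ge0$, which keeps the constant $C$.
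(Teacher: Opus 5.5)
Your proposal is correct and follows the paper's proof essentially step by step. For (i)$\Rightarrow$(ii), the paper likewise factors $\Gamma_{f,\lambda}=T\Lambda T^*$, interpolates between the $\Sch^1$ bound $\|\lambda\|_{\ell^1}$ and the $\Sch^\infty$ bound $C_B(f)\|\lambda\|_{\ell^\infty}$ to get $C_B(f)^{1-1/\beta}\|\lambda\|_{\ell^\beta}$, and concludes via the trace identity, Schatten H\"older and mixed-norm duality, working with finitely supported $\lambda$ first. For (ii)$\Rightarrow$(i), it takes a finite-rank $\Gamma$ through its singular value decomposition and uses Cauchy--Schwarz and H\"older to reduce to two applications of (ii) to orthonormal families (with $C_B=1$), finishing by Schatten duality; so your preliminary $V\ge0$ discussion and your alternative factorization route are not needed.
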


\begin{proof}
We use the Dirac-notation $|f\rangle\langle g|$ for the rank-one operator on $L^2(X)$
\[|f\rangle\langle g|: h\mapsto \langle h,g \rangle  f.\]

(i)$\Rightarrow$(ii). 
Assume (i) holds. Let $f=(f_j)_j\subset L^2(X)$ be a Bessel family, and let
$\lambda=(\lambda_j)_j$ be finitely supported.  Define
$T:\ell^2\to L^2(X)$ and $\Lambda:\ell^2\to\ell^2$ by
\[
  Te_j=f_j,
  \qquad
  \Lambda e_j=\lambda_j e_j,
\]
and set
\[
  \Gamma_{f,\lambda}:=T\Lambda T^*
  =\sum_j\lambda_j|f_j\rangle\langle f_j|.
\]
By the definition of the Bessel bound,
$\norm{T}_{\ell^2\to L^2(X)}^2=C_B(f)$.  Since the Bessel family is
normalized,
\[
  \norm{\Gamma_{f,\lambda}}_{\Sch^1(L^2(X))}
  \le \sum_j|\lambda_j|\norm{f_j}_{L^2(X)}^2
  =\norm{\lambda}_{\ell^1},
\]
whereas
\begin{align*}
  \norm{\Gamma_{f,\lambda}}_{\Sch^\infty(L^2(X))}
  &=\norm{T\Lambda T^*}_{L^2(X)\to L^2(X)}\\
  &\le \norm{T}_{\ell^2\to L^2(X)}^2
      \norm{\Lambda}_{\ell^2\to\ell^2}\\
  &\le C_B(f)\norm{\lambda}_{\ell^\infty}.
\end{align*}
Hence, by complex interpolation, for every $1\le\beta<\infty$,
\begin{equation}\label{eq:bessel-schatten-factorization}
  \norm{\Gamma_{f,\lambda}}_{\Sch^\beta(L^2(X))}
  \le C_B(f)^{1-1/\beta}\norm{\lambda}_{\ell^\beta}.
\end{equation}
Moreover, 
we have the 
trace identity
\begin{equation}\label{eq:bessel-duality-trace}
  \Tr\!\left(A_V\Gamma_{f,\lambda}\right)
  =\int_{\R\times X}V(t,x)
    \sum_j\lambda_j|Ef_j(t,x)|^2\,\dd x\dd t.
\end{equation}
For every bounded
$V\in L_t^{p'}(\R;L_x^{q'}(X))$ with support of finite measure,
\eqref{eq:bessel-schatten-factorization}, \eqref{eq:bessel-duality-trace}, 
and Schatten H\"older inequality (see \cite[Theorem~2.8]{SimonTrace}) give
\begin{align*}
 \left| \int_{\R\times X}V(t,x)
    \sum_j\lambda_j|Ef_j(t,x)|^2\,\dd x\dd t\right|
  &\le
  \norm{A_V}_{\Sch^{\beta'}(L^2(X))}
  \norm{\Gamma_{f,\lambda}}_{\Sch^\beta(L^2(X))}\\
  &\le
  C C_B(f)^{1-1/\beta}\norm{\lambda}_{\ell^\beta}
  \norm{V}_{L_t^{p'}(\R;L_x^{q'}(X))}.
\end{align*}
The density of bounded functions with support of finite measure in $L_t^{p'}(\R;L_x^{q'}(X))$ and the mixed-norm duality yield \eqref{eq:intro-bessel-conclusion} for finitely supported
$\lambda$.  The general case follows by approximation in $\ell^\beta$.

(ii)$\Rightarrow$(i). Conversely, assume \textup{(ii)} holds and fix
$V\in L_t^{p'}(\R;L_x^{q'}(X))$.
In this case, $A_V$ is well defined and is a bounded operator on $L^2(X)$.
Let $\Gamma$ be an arbitrary finite-rank operator on $L^2(X)$.  By the
singular value decomposition, 
\[
  \Gamma=\sum_{j=1}^n\lambda_j|f_j\rangle\langle g_j|,
\]
where $n$ is the rank of $\Gamma$, $(f_j)_{j=1}^n$ and $(g_j)_{j=1}^n$ are orthonormal systems and
$\lambda_j>0$.
By the definition of $A_V$,
\[
  \Tr(A_V\Gamma)
  =\int_{\R\times X}V(t,x)
    \sum_{j=1}^n\lambda_j Ef_j(t,x)\overline{Eg_j(t,x)}\,\dd x\dd t.
\]
Therefore, Cauchy--Schwarz, mixed-norm H\"older inequality and \textup{(ii)} give
\begin{align*}
  |\Tr(A_V\Gamma)| & \leq \norm{V}_{L_t^{p'}(\R;L_x^{q'}(X))}
  \norm{\sum_{j=1}^n\lambda_j Ef_j\overline{Eg_j}}_{L_t^p(\R;L_x^q(X))}\\
  &\le C\norm{V}_{L_t^{p'}(\R;L_x^{q'}(X))} \norm{\lambda}_{\ell^\beta}
  =C\norm{V}_{L_t^{p'}(\R;L_x^{q'}(X))}\norm{\Gamma}_{\Sch^\beta(L^2(X))}.
\end{align*}
Since finite-rank operators are dense in $\Sch^\beta$,
by Schatten duality (see \cite[Theorem~3.2]{SimonTrace}),
\begin{equation*}
  \norm{A_V}_{\Sch^{\beta'}(L^2(X))}
  =\sup_{\substack{\norm{\Gamma}_{\Sch^\beta(L^2(X))}\le1}}
    |\Tr(A_V\Gamma)| \le C\norm{V}_{L_t^{p'}(\R;L_x^{q'}(X))}.
\end{equation*}
This proves \textup{(i)} and completes the proof.
\end{proof}

\begin{remark}\label{rem:abstract-closed-subspace}
Lemma~\ref{thm:intro-abstract}
remains valid, if the domain $L^2(X)$ is replaced
by any closed subspace, or even arbitrary Hilbert space.  This will apply to the radial case in Section~\ref{sec:radial}.
\end{remark}

\subsection{From dispersive to Bessel-family Strichartz estimates}
\label{subsec:abstract-dispersive-bessel}

The argument for the following lemma is adapted from \cite[Theorem~9]{FrankSabin} and  \cite[Theorem~1.4]{Hoshiya2025}.

\begin{lemma}
\label{thm:abstract-bessel-dispersive}
Let $X$ be a $\sigma$-finite measure space, and let
$U(t)=e^{-itL}$ be a strongly continuous unitary group on $L^2(X)$ with
self-adjoint operator $L$ as infinitesimal generator.  Let $P:L^2(X)\to L^2(X)$ be a bounded operator
satisfying
\begin{equation*}
  \norm{P}_{L^2(X)\to L^2(X)}\le1,
  \qquad
  PU(t)=U(t)P,
  \qquad t\in\R.
\end{equation*}
Assume that, for some $h>\frac12$ and  $C_0>0$,
\begin{equation}\label{eq:dispersive hypothesis}
  \norm{U(t)PP^*}_{L^1(X)\to L^\infty(X)}
  \le C_0\,|t|^{-h},\qquad t\ne0.
\end{equation}
Let $p,q\ge 1$ satisfy $1/p+h/q=h$, and set $$q_b(h):=\frac{2h+1}{2h-1}.$$  
Suppose that one of the following holds:
\[
\left\{
\begin{aligned}
&1\le \beta\le \beta(q),
&&1\le q<q_b(h),
&&\textup{(a)},
\\
&1\le \beta<p,
&&h\ge1,\quad
q_b(h)\le q<\frac{h}{h-1},
&&\textup{(b)},
\\
&1\le \beta<
\frac{q(2h-1)}{q(2h-1)-1},
&&\frac12<h<1,\quad
q_b(h)\le q<\infty,
&&\textup{(c)},
\\
&\beta=1,
&&\frac12<h<1,\quad q=\infty,
&&\textup{(d)}.
\end{aligned}
\right.
\]
Then for every Bessel family $f=(f_j)_j\subset L^2(X)$ and
every $\lambda\in\ell^\beta$
\begin{equation}\label{eq:bessel-strichartz-estimate}
  \norm{\sum_j\lambda_j|U(t)Pf_j|^2}_{L_t^p(\R;L_x^q(X))}
  \lesssim
  C_0^{\,1-1/q}\,C_B(f)^{1-1/\beta}\,
  \norm{\lambda}_{\ell^\beta}.
\end{equation}
\end{lemma}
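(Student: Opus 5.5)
By Lemma~\ref{thm:intro-abstract}, applied with $E f:=U(t)Pf$, the estimate \eqref{eq:bessel-strichartz-estimate} is equivalent to the Schatten bound
\[
\norm{A_V}_{\Sch^{\beta'}}\lesssim C_0^{1-1/q}\norm{V}_{L^{p'}_tL^{q'}_x},
\qquad A_V=\int_\R P^*U(t)^*V(t)U(t)P\,dt .
\]
Writing $A_V=S^*VS$ with $S:f\mapsto U(t)Pf$, we have $\norm{A_V}_{\Sch^{\beta'}}=\norm{W_1SS^*\overline{W_2}}_{\Sch^{\beta'}(L^2_{t,x})}$ whenever $V=W_1W_2$ with $|W_1|=|W_2|=|V|^{1/2}$. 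Since $\beta'\ge2$ is the dual exponent, in case (a) I would aim for $\beta'=2(q'... )$. Concretely, the target is $\norm{W_1 SS^* W_2}_{\Sch^{2q'}}\lesssim C_0^{1/q'}\norm{W_1}_{L^{2p'}L^{2q'}}\norm{W_2}_{L^{2p'}L^{2q'}}$, which is exactly $\beta=\beta(q)$ since $\beta(q)'=2q'$. Here
\[
SS^*F(t)=\int U(t-s)PP^*F(s)\,ds .
\]
Following Frank--Sabin, I embed this in the analytic family
\[
T_z F(t)=\int |t-s|_+^{\,z}\,U(t-s)PP^*F(s)\,ds,
\]
suitably normalized by a factor such as $(z+1)/\Gamma(z+1)$ so that at $z=-1$ it reproduces (a multiple of) $SS^*$; equivalently, one can use the time-truncated kernel $1_{t>s}$ plus its adjoint. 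Commutation of $P$ with $U$ is used here to write the kernel as $U(t-s)PP^*$.

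The two endpoint estimates are as follows.
\begin{enumerate}
\item On $\operatorname{Re}z=0$ (more precisely $\operatorname{Re}z=-1+i\R$ after normalization), $T_z$ is bounded on $L^2_{t,x}$ by Plancherel in time. Using the spectral theorem for $L$, the kernel in $t$ becomes a multiplier $(\tau-L)^{-1-z}$-type expression that is bounded with polynomial growth in $\operatorname{Im}z$. Since $\norm{PP^*}\le1$, this gives $\norm{W_1T_zW_2}_{\Sch^\infty}\lesssim e^{C|\operatorname{Im}z|}\norm{W_1}_{L^\infty}\norm{W_2}_{L^\infty}$.
\item On $\operatorname{Re}z=h-1$ the operator is controlled in Hilbert--Schmidt norm. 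By \eqref{eq:dispersive hypothesis}, the kernel of $W_1T_zW_2$ is bounded by $C_0|W_1(t,x)|\,|t-s|^{\operatorname{Re}z-h}\,|W_2(s,y)|$. Hardy--Littlewood--Sobolev in $t$ with exponent $\lambda=h-\operatorname{Re}z$ then gives the $\Sch^2$ bound $C_0\norm{W_1}_{L^{r}_tL^2_x}\norm{W_2}_{L^r_tL^2_x}$, where $2/r+\lambda=... $, valid when $0<\lambda<1$.
\end{enumerate}
Stein interpolation between these two lines at $z=-1$ gives the $\Sch^{2q'}$ bound with $W_i\in L^{2p'}_tL^{2q'}_x$. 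The relation $1/p+h/q=h$ comes out of the exponent bookkeeping, and the factor $C_0^{1/q'}=C_0^{1-1/q}$ comes from interpolation. The HLS constraint $0<\lambda<1$, i.e.\ choosing $\operatorname{Re}z$ so that $\operatorname{Re}z\in(h-2... )$, translates exactly into $q<q_b(h)$. This yields case (a), and the cases $\beta<\beta(q)$ follow by the trivial inclusion of $\ell^\beta$ together with the interpolation $C_B^{1-1/\beta}$ already built in.

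For cases (b) and (c), beyond $q_b(h)$ the HLS exponent degenerates, and I would obtain the non-endpoint ranges $\beta<p$ (resp.\ $\beta<q(2h-1)/(q(2h-1)-1)$) by interpolating. On one side is the Schatten bound from (a) near $q\uparrow q_b(h)$. On the other side is the trivial $\beta=1$ estimate, obtained from the single-function Keel--Tao Strichartz estimate implied by \eqref{eq:dispersive hypothesis} at $q$ up to the endpoint $h/(h-1)$ (case (b)), or up to $q=\infty$ when $h<1$ (cases (c),(d)). Here the $\beta=1$ bound is just the triangle inequality with normalized $f_j$. Complex interpolation of the Schatten bounds for $A_V$ along the line $1/p+h/q=h$, combined with Lemma~\ref{thm:intro-abstract}, gives strict inequality because the endpoint at $q_b(h)$ is not attained. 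Alternatively, one can use the Frank--Sabin device of interpolating with exponents arbitrarily close to $q_b(h)$ from below, which produces exactly the open conditions. Case (d) is simply the single-function estimate with $L^\infty_x$, i.e.\ $p=1/h$... though $p\ge1$ forces consistency, and I would check this directly from Keel--Tao/Hoshiya.

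I expect the main obstacle to be the $L^2$ bound on $\operatorname{Re}z=0$ in this abstract setting. There Plancherel must be replaced by the spectral calculus of $L$, and one must verify that the factor $P$ and the normalization give a uniform bound with admissible growth in $\operatorname{Im}z$. This is the step adapted from \cite{Hoshiya2025}.
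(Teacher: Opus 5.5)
Your strategy is the paper's: duality via Lemma~\ref{thm:intro-abstract}, Stein interpolation for an analytic family built on $U(t-s)PP^*$, and then complex interpolation with Keel--Tao $\ell^1$ bounds. The Stein family interpolates between an $L^2$-bounded line and a Hilbert--Schmidt line controlled by the dispersive bound plus Hardy--Littlewood--Sobolev. However, the analytic-family step fails as written, and the range you claim for (a) is too large.

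\textbf{The analytic family.} Your parametrization is inconsistent. You use the kernel $|t-s|_+^{z}$, place $SS^*$ at $z=-1$, and put the bounded line at ``$\Re z=0$, more precisely $\Re z=-1$''. Interpolating at a point of the bounded line returns only the $\Sch^\infty$ bound. With the kernel $(t-s)_+^{z}/\Gamma(z+1)$, the bounded line is $\Re z=-1$ and the retarded half of $SS^*$ sits at $z=0$.

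The more serious problem is that the $\Sch^2$ norm involves the \emph{square} of the kernel. On $\Re z=\lambda_0$ you must bound $\iint\|W_1(t)\|_{L^2_x}^2\,|t-s|^{-2(h-\lambda_0)}\,\|W_2(s)\|_{L^2_x}^2\,dt\,ds$. HLS therefore needs $0<2(h-\lambda_0)<1$, not $0<h-\lambda_0<1$. On your line $\Re z=h-1$ the squared kernel is $|t-s|^{-2}$, which is not locally integrable, so that endpoint is false. The correct choice is $h-\tfrac12<\lambda_0<h$. Interpolating at $z=0$ with $\theta=1/(1+\lambda_0)$ then gives $\Sch^{2q'}$ with $q'=1+\lambda_0$, weights in $L^{2p'}_tL^{2q'}_x$, and $1/p+h/q=h$. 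This is the paper's Step~2 with $r=2q'\in(2h+1,2h+2)$. The paper uses instead the truncated kernel $\mathbf 1_{\{|t-s|>\varepsilon\}}(t-s)^{-1-z}$, with $SS^*$ at $z=-1$, the bounded line at $\Re z=0$, and the $\Sch^2$ line at $\Re z=-q'$.

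\textbf{The range in case (a).} Because the HLS constraint is two-sided, direct interpolation gives only $1+1/h<q<q_b(h)$. For $q<1+1/h$ you would need $\lambda_0>h$, where the kernel bound $|t-s|^{\lambda_0-h}$ grows and no $\Sch^2$ bound is available. So the constraint does not ``translate exactly into $q<q_b(h)$'', and it does not yield all of (a). The missing piece is the paper's Step~3: interpolate the estimate at some $q_0\in(1+1/h,q_b(h))$ with the trivial $\ell^1$ bound at $(p,q)=(\infty,1)$. Since $1/\beta(q)=\tfrac12+\tfrac1{2q}$ and $1/p$ are both affine in $1/q$, this gives exactly $\beta=\beta(q)$ on $[1,q_0]$. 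You already use this device for (b)--(d), so the repair is immediate.

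\textbf{Two smaller points.} For $\beta<\beta(q)$, the inclusion $\ell^\beta\subset\ell^{\beta(q)}$ only gives the worse factor $C_B^{1-1/\beta(q)}$. Instead use $\|A_V\|_{\Sch^{\beta'}}\le\|A_V\|_{\Sch^{\beta(q)'}}$ and apply Lemma~\ref{thm:intro-abstract} at exponent $\beta$. Also, the $L^2$ bound you flag as the main obstacle is elementary and needs no spectral calculus for $L$. Since $U(t-s)PP^*=U(t)PP^*U(s)^*$, conjugating by the unitary $F\mapsto U(t)F(t)$ on $L^2_tL^2_x$ reduces it to a scalar time convolution. That convolution is bounded by the supremum of its Fourier transform; this is the paper's uniformly bounded $H_b^\varepsilon$.
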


\begin{proof}
\emph{Step 1. Reduction.} We may assume $C_0=1$.  For general $C_0>0$, set $\widetilde U(t)=U(C_0^{1/h}t)$.  Then
\[
  \norm{\widetilde U(t)PP^*}_{L^1(X)\to L^\infty(X)}\le |t|^{-h}.
\]
Applying the $C_0=1$ estimate to $\widetilde U$ and changing variables gives
the estimate \eqref{eq:bessel-strichartz-estimate}.

Since $P, P^*$ commute with $U(t)$, for $f(t,x)\in L^2_x(X)$
\[
  (EE^*f)(t,x)=\int_{\R}U(t-s)PP^*f(s,x)\,\dd s, \quad E:=U(t) \circ P.
\]
For $1<p,q<\infty$, Lemma~\ref{thm:intro-abstract} reduces the desired
Bessel-family estimate \eqref{eq:bessel-strichartz-estimate} to
\begin{equation}\label{eq:schatten-normalized}
  \norm{A_V}_{\Sch^{\beta'}(L^2(X))}=
  \norm{E^*M_VE}_{\Sch^{\beta'}(L^2(X))}
  \lesssim
  \norm{V}_{L_t^{p'}(\R;L_x^{q'}(X))}
\end{equation}
for every $V\in L_t^{p'}(\R;L_x^{q'}(X))$,
where $M_V$ is the \emph{multiplication operator} by $V$.
By decomposing the real and imaginary parts of $V$ into their positive and negative parts, it suffices to prove
\eqref{eq:schatten-normalized} for $V\ge0$.

\emph{Step 2.}  We begin with the \emph{smaller subcritical range}
\begin{equation}\label{eq:range-nearB}
  1+\frac1h<q<q_b(h).
\end{equation}
Put $r:=2q'$,
then $2h+1<r<2h+2$.
For $\varepsilon>0$ define the truncated analytic family
\[
  \mathcal T_z^\varepsilon f(t,x)
  :=\int_\R
    \mathbf 1_{\{|t-s|>\varepsilon\}}(t-s)^{-1-z}
    U(t-s)PP^*f(s,x)\,\dd s,
  \qquad -\frac r2\le\Re z\le0,
\]
with kernel denoted by $K_z^\varepsilon(t,x;s,y)$.
Let $M_W$ be a multiplication operator associated with function $W$ on
$\R\times X$.  On the vertical line $z=-r/2+ib$, the dispersive estimate \eqref{eq:dispersive hypothesis} gives
the pointwise bound
\[
  |K_z^\varepsilon(t,x;s,y)|
  \lesssim e^{c|b|}\,1_{\{|t-s|>\varepsilon\}}\,|t-s|^{-1+r/2-h}.
\]
Hence, with $f_W(t):=\norm{W(t,\cdot)}_{L_x^2}^2$,
\begin{equation*}
  \norm{M_W\mathcal T^\varepsilon_{-r/2+ib}M_W}_{\Sch^2}^2
  \lesssim e^{c|b|}
  \iint_{\R^2} 1_{\{|t-s|>\varepsilon\}}|t-s|^{r-2h-2}f_W(t)f_W(s)\,\dd t\dd s.
\end{equation*}
Since $0<2h+2-r<1$, the Hardy--Littlewood--Sobolev
inequality yields, uniformly in $\varepsilon$,
\begin{equation}\label{eq:abstract-Tz-HS-short}
  \norm{M_W\mathcal T^\varepsilon_{-r/2+ib}M_W}_{\Sch^2}
  \lesssim e^{c|b|}
  \norm{W}_{L_t^{4/(r-2h)}L_x^2}^{2}.
\end{equation}
On the other hand, for $z=ib$, since $\norm{PP^*}_{L^2\to L^2}\le1$,
\begin{equation}\label{eq:abstract-Tz-Sinfty-44}
  \norm{M_W\mathcal T^\varepsilon_{ib}M_W}_{\Sch^\infty}
  \lesssim e^{c|b|}\norm{W}_{L_{t,x}^\infty}^{2},
\end{equation}
where we use the uniform boundedness of the truncated Hilbert transform
\[ \norm{H_b^\varepsilon}_{L^2(\R)\to L^2(\R)}
  \lesssim e^{c|b|}, \quad 
  H_b^\varepsilon g(t)
  :=\int_{|t-s|>\varepsilon}\frac{g(s)}{(t-s)^{1-ib}}\,\dd s.
\]
Applying Stein's interpolation to \eqref{eq:abstract-Tz-HS-short} and
\eqref{eq:abstract-Tz-Sinfty-44} gives
\begin{equation}\label{eq:abstract-weighted-TTstar-schatten-short}
  \norm{M_W\mathcal T^\varepsilon_{-1}M_W}_{\Sch^r}
  \lesssim
  \norm{W}_{L_t^{2r/(r-2h)}L_x^r}^{2},
  \qquad 2h+1<r<2h+2.
\end{equation}
Now take $W=V^{1/2}$ with $V\ge0$ and let $\varepsilon \to 0$. Since the singular values of
$(M_WE)(M_WE)^*$ and $(M_WE)^*(M_WE)=E^*M_VE$ coincide,
\eqref{eq:abstract-weighted-TTstar-schatten-short} gives \eqref{eq:schatten-normalized} and \eqref{eq:bessel-strichartz-estimate} with $\beta=\beta(q)$ for the range \eqref{eq:range-nearB}.

\emph{Step 3. Interpolation.}
Then, for $1\leq p,q \leq \infty$ satisfying $1/p+h/q=h$, with $(h,p,q)\ne(1,1,\infty)$,
the dispersive estimate
\eqref{eq:dispersive hypothesis} and the standard Keel--Tao argument (see \cite[Theorem~1.2]{KeelTao}) give the classical single-function Strichartz estimate
\begin{equation*}
  \norm{U(t)Pf}_{L_t^{2p}(\R;L_x^{2q}(X))}
  \lesssim \norm{f}_{L^2(X)}.
\end{equation*}
By the triangle inequality we have the  $\ell^1$ bound 
\begin{equation*}
  \norm{\sum_j\lambda_j|U(t)Pf_j|^2}_{L_t^p(\R;L_x^q(X))}
  \le \sum_j|\lambda_j|
      \norm{U(t)Pf_j}_{L_t^{2p}(\R;L_x^{2q}(X))}^2
  \lesssim \norm{\lambda}_{\ell^1}.
\end{equation*}
Complex interpolation with Step 2 gives the desired estimate \eqref{eq:bessel-strichartz-estimate} for all cases \textup{(a)}--\textup{(d)} with $C_0=1$.
More precisely, 
we consider \eqref{eq:bessel-strichartz-estimate} at an arbitrary fixed point $B_*$ such that $1+1/h<q<q_b(h)$. 
Then interpolation with the $\ell^1$ estimate at point $(p,q)=(\infty,1)$ gives the case \textup{(a)};
interpolation with the $\ell^1$ estimate at arbitrary point $(p,q)$ where $q_b(h)\le q<h/(h-1)$ gives the case \textup{(b)} for $h\ge 1$;
interpolation with the $\ell^1$ estimate at point $(p,q)=(1/h, \infty)$ gives the cases \textup{(c)} and \textup{(d)} for $1/2<h<1$.
\end{proof}

\section{KP and ZK models}
\label{sec:KP}
The general almost-orthogonal Schatten duality and dispersive-transference principles can be applied to a wide class of dispersive equations. Here we consider two special models. 

\subsection{KP estimates}

We recall the KP scaling.  Set 
\[
  f_\mu(x,y)=\mu^{3/2}f(\mu x,\mu^2y),\quad \mu>0.
\]
Then a change of variables gives $\norm{f_\mu }_{L^2(\R^2)}=\norm{f}_{L^2(\R^2)}$ and
\[
  \norm{U_{\KP}(t)f_\mu}_{L_t^{2p}(\R;L_{x,y}^{2q}(\R^2))}
  =\mu^{\frac32-\frac{3}{2q}-\frac{3}{2p}}
  \norm{U_{\KP}(t)f}_{L_t^{2p}(\R;L_{x,y}^{2q}(\R^2))}.
\]
Thus a global Strichartz estimate 
can be scale invariant only if
\[
  \frac1p+\frac1q=1.
\]
The kernel of the KP propagator $U_{\KP}(t)$ is
\begin{equation*}
  K_{\KP}(t,x,y)
  =(2\pi)^{-2}\int_{\R^2}e^{i(x\xi+y\eta+t\Phi_{\KP}(\xi,\eta))}
  \,\dd\xi\,\dd\eta,
\end{equation*}
size bound of which will give the following global dispersive estimate; see \cite[Lemma~4]{MSTBackgroundKPI} and the dispersive bound
in the proof of \cite[Theorem~3.1]{HadacKP}.   Theorem~\ref{thm:intro-KP} is then a direct consequence of Lemma~\ref{lem:global-KP-dispersive} and Lemma~\ref{thm:abstract-bessel-dispersive}~{(a) and (b)} with $h=1$,
$P=I$ and $C_0\sim 1$.
\begin{lemma}
\label{lem:global-KP-dispersive}
For every $t\ne0$ and every $f\in L^1(\R^2)$,
\begin{equation*}
  \norm{U_{\KP}(t)f}_{L^\infty(\R^2)}
  \lesssim |t|^{-1}\norm{f}_{L^1(\R^2)}.
\end{equation*}
\end{lemma}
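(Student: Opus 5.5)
The plan is to write $U_{\KP}(t)f=K_{\KP}(t)\ast f$ and use Young's inequality, $\norm{K\ast f}_{L^\infty}\le\norm{K}_{L^\infty}\norm{f}_{L^1}$. This reduces the lemma to the pointwise kernel bound
\[
\sup_{(x,y)}|K_{\KP}(t,x,y)|\lesssim |t|^{-1}.
\]
First, scaling reduces us to $|t|=1$. Substituting $\xi=|t|^{-1/3}\xi'$ and $\eta=|t|^{-2/3}\eta'$ gives
\[
K_{\KP}(t,x,y)=|t|^{-1}K_{\KP}\bigl(\operatorname{sgn}t,\ x|t|^{-1/3},\ y|t|^{-2/3}\bigr),
\]
since $t\Phi_{\KP}(\xi,\eta)=\operatorname{sgn}(t)\,\Phi_{\KP}(\xi',\eta')$ and the Jacobian is $|t|^{-1}$. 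It therefore suffices to show that $K_{\KP}(\pm1,\cdot,\cdot)$ is bounded, with the oscillatory integral interpreted in the usual limiting (distributional) sense.

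Next I will perform the $\eta$-integration explicitly as a Fresnel (Gaussian) integral. For fixed $\xi\neq0$,
\[
\int_{\R}e^{i(y\eta\pm\eta^2/\xi)}\,\dd\eta=\sqrt{\pi|\xi|}\,e^{\pm i\pi\operatorname{sgn}(\xi)/4}\,e^{\mp i y^2\xi/4}.
\]
Inserting this leaves a one-dimensional oscillatory integral
\[
K_{\KP}(1,x,y)=c\int_{\R}|\xi|^{1/2}\,e^{\pm i\pi\operatorname{sgn}(\xi)/4}\,e^{i(\xi^3+(x\mp y^2/4)\xi)}\,\dd\xi .
\]
Writing $a=x\mp y^2/4$, the task is to bound $I(a)=\int|\xi|^{1/2}e^{i(\xi^3+a\xi)}\,\dd\xi$ (treating the two half-lines separately because of the sign factor) uniformly in $a\in\R$. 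This is the usual Airy-type estimate, and the only issue is the growing amplitude $|\xi|^{1/2}$.

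The main obstacle is this uniform-in-$a$ bound, which I will handle by case analysis.
\begin{itemize}
\item For $a\ge -1$ (no or weak critical points), I split with a smooth cutoff at $|\xi|\sim1$. The compact part is trivially bounded. On the tail $|\phi'|=|3\xi^2+a|\gtrsim\xi^2$, so one integration by parts gives an integrand of size $|\xi|^{1/2}\xi^{-3}$, which is integrable.
\item For $a=-3\mu^2$ with $\mu\ge1$, the critical points are $\xi=\pm\mu$. Near them, $|\phi''|=6|\xi|\sim\mu$, so van der Corput (with the amplitude's variation accounted for) gives a contribution $\lesssim\mu^{1/2}\cdot\mu^{-1/2}=O(1)$.
\item Away from the critical points, on dyadic regions, I integrate by parts using $|\phi'|=3|\xi^2-\mu^2|$. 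The resulting pieces sum to $O(1)$: near the origin the amplitude is small, and for $|\xi|\gg\mu$ the previous tail bound applies.
\end{itemize}

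The genuinely delicate part is justifying the interchange of the $\eta$- and $\xi$-integrations and the convergence of the oscillatory integrals. I will insert a Gaussian damping $e^{-\epsilon(\xi^2+\eta^2)}$, check that all of the above bounds are uniform in $\epsilon$, and let $\epsilon\to0$. This gives $\norm{K_{\KP}(t)}_{L^\infty}\lesssim|t|^{-1}$, and hence the lemma.
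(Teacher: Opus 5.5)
Your proposal is correct and matches the paper's treatment. The paper gives no proof of this lemma; it cites \cite[Lemma~4]{MSTBackgroundKPI} and the dispersive bound in the proof of \cite[Theorem~3.1]{HadacKP}, and your argument is the standard kernel bound behind those references: Fresnel integration in $\eta$, then a uniform-in-$a$ bound for $\int|\xi|^{1/2}e^{i(\xi^3+a\xi)}\,\dd\xi$.

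There are two small repairs.

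First, the regularization. With the damping $e^{-\epsilon(\xi^2+\eta^2)}$, the $\eta$-Gaussian produces the phase $\xi^3+x\xi\mp\frac{y^2\xi}{4(1+\epsilon^2\xi^2)}$. This is not of the form $\xi^3+a\xi$, so uniformity in $\epsilon$ does not follow directly from your case analysis. Use the damping $e^{-\epsilon\xi^2-\epsilon\eta^2/|\xi|}$ instead. It gives exactly the phase $\xi^3+\bigl(x\mp\frac{y^2}{4(1+\epsilon^2)}\bigr)\xi$, and the only extra amplitude factors are constant or monotone on each half-line, so your bounds carry over uniformly.

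Second, your first two cases leave the range $-3<a<-1$ uncovered. Enlarging the cutoff in your first case handles it.
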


\subsection{ZK estimates}

A similar ZK scaling 
\[
  f_\mu (z)=\mu^{d/2}f(\mu z), \quad \mu>0,
\]
implies that for valid global ZK Strichartz estimate
\[
  \frac3p+\frac d q= d.
\]
For localized estimate in the full admissible region, the frequency power $N^{\sigma_{\ZK}(p,q)}$ appears.
The corresponding
ZK kernel is
\[
  K_{\ZK}(t,z)
  =(2\pi)^{-d}\int_{\R^d}e^{i(z\cdot\zeta+t\xi(\xi^2+|\eta|^2))}\,\dd\zeta,
\]
bound of which will give the following global dispersive estimates.

\begin{lemma}
\label{lem:ZK-dispersive}
For every $t\ne0$ and every $f\in L^1(\R^d)$,
\begin{equation}
  \begin{cases}
 \norm{U_{\ZK}(t)f}_{L^\infty(\R^2)}
  \lesssim |t|^{-2/3}\norm{f}_{L^1(\R^2)}, &  d=2\,,\label{eq:zk-2d-global-dispersive}
  \\[1.2ex]
   \norm{U_{\ZK}(t)P_Nf}_{L^\infty(\R^d)}
  \lesssim N^{d-3}\,|t|^{-1}\norm{f}_{L^1(\R^d)}, & d\ge 3\,.  \end{cases}
\end{equation}
\end{lemma}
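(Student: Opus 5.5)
We prove both bounds by bounding the kernel $K_{\ZK}(t,\cdot)$, respectively its frequency-localized version, in $L^\infty$, since $\norm{U(t)f}_{L^\infty}\le\norm{K(t,\cdot)}_{L^\infty}\norm{f}_{L^1}$. Throughout we use the homogeneity of the phase. The substitution $\zeta\mapsto |t|^{-1/3}\zeta$ gives
\[
K_{\ZK}(t,z)=|t|^{-d/3}K_{\ZK}(\pm1,|t|^{-1/3}z).
\]

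\emph{Case $d=2$.} The scaling identity reduces the claim to $\sup_z|K_{\ZK}(1,z)|<\infty$, interpreted as an oscillatory integral. The phase $\phi(\xi,\eta)=\xi^3+\xi\eta^2$ has Hessian
\[
\begin{pmatrix}6\xi&2\eta\\2\eta&2\xi\end{pmatrix},
\qquad \det=12\xi^2-4\eta^2 .
\]
A linear change of variables, for instance $\xi=(a+b)/c$ and $\eta=(a-b)/c'$ with suitable constants (the standard Grünrock--Herr symmetrization), turns $\phi$ into a multiple of $a^3+b^3$. The integral then factorizes into a product of two Airy integrals $\int e^{i(\alpha a+ a^3)}\dd a$, each bounded uniformly in $\alpha$. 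This gives $|K_{\ZK}(t,z)|\lesssim|t|^{-2/3}$. To be rigorous, one inserts a Gaussian damping $e^{-\epsilon|\zeta|^2}$ and lets $\epsilon\to0$, using that the Airy bound is uniform in the damping.

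\emph{Case $d\ge3$.} Write $P_N$ with a symbol $\psi$; the kernel of $U_{\ZK}(t)P_N$ is
\[
K_N(t,z)=(2\pi)^{-d}\int e^{i(z\cdot\zeta+t\Phi_{\ZK}(\zeta))}\psi(\zeta/N)\dd\zeta .
\]
Scaling $\zeta=N\zeta'$ gives
\[
K_N(t,z)=N^dK_1(N^3t,Nz),
\]
so it suffices to show $|K_1(t,z)|\lesssim|t|^{-1}$. The claimed bound then follows from $N^d(N^3|t|)^{-1}=N^{d-3}|t|^{-1}$. For $|t|\le1$ the bound is trivial, since $|K_1|\le\norm{\psi}_{L^1}$.

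For $|t|\ge1$ we use stationary phase on the annulus $|\zeta|\sim1$. The Hessian of $\Phi=\xi^3+\xi|\eta|^2$ is
\[
\begin{pmatrix}6\xi&2\eta^T\\2\eta&2\xi I_{d-1}\end{pmatrix}.
\]
Its determinant is $(2\xi)^{d-2}(12\xi^2-4|\eta|^2)$, so it degenerates where $\xi=0$ or $|\eta|^2=3\xi^2$. The decay $|t|^{-1}$ only requires two nondegenerate directions. We therefore split the annulus with a partition of unity.

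On the region $|\xi|\gtrsim|\eta|$ we integrate first in $\xi$ (one-dimensional) and, for fixed $\xi\neq0$, in one direction of $\eta$. This gives decay $|t|^{-1/2}$ from each by van der Corput, since $\partial_{\eta_k}^2\Phi=2\xi\ne0$ and $\partial_\xi^3\Phi=6$.

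On the region $|\xi|\ll|\eta|\sim1$ we use polar-type coordinates in $\eta$. Two transversal directions among the $\eta$ variables have second derivative $2\xi$, which may be small, so instead we use the mixed structure. In the $\eta$-radial variable $\rho$, $\partial_\rho^2\Phi=2\xi$, while $\partial_\xi^2\Phi=6\xi$ and $\partial_\xi\partial_\rho\Phi=2\rho\sim1$. The $2\times2$ Hessian in $(\xi,\rho)$ therefore has determinant $\approx-4\rho^2\neq0$, uniformly nondegenerate. A two-dimensional stationary phase (or van der Corput with a nondegenerate Hessian, uniform in the remaining angular variables of $\eta$ through Fubini) gives $|t|^{-1}$.

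The main obstacle is making this decomposition uniform in $z$ and handling the transition regions, including the angular Jacobian $\rho^{d-2}$, which is smooth and bounded on the annulus. If this proves delicate, we would cite \cite[Proposition~1.3]{LinaresRamosZK} or the dispersive bound of Herr--Kinoshita \cite{HerrKinoshitaZK}, which yields exactly $N^{d-3}|t|^{-1}$ after the scaling above.
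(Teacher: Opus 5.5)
Your scaling reduction for $d\ge3$, $K_N(t,z)=N^dK_1(N^3t,Nz)$, is exactly the paper's argument. The paper then simply quotes the unit-annulus bound $\sup_z|K_1(t,z)|\lesssim|t|^{-1}$ from \cite[Lemma~3.2]{LinaresRamosZK}. Your fallback reference \cite[Proposition~1.3]{LinaresRamosZK} is a Strichartz estimate and does not give a pointwise kernel bound. For $d=2$ the paper only cites \cite[(2.1)--(2.2)]{LinaresPastorDrumondSilva}. Your symmetrization argument is a correct self-contained alternative: with $a=\mu\xi+\lambda\eta$, $b=\mu\xi-\lambda\eta$, $\mu=\sqrt3\lambda$, one has $a^3+b^3=2\mu^3(\xi^3+\xi\eta^2)$, and two Airy factors give $|t|^{-2/3}$. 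Take the damping as $e^{-\epsilon(a^2+b^2)}$, since $e^{-\epsilon|\zeta|^2}$ acquires an $ab$ cross term and does not factorize. Your region $|\xi|\ll|\eta|$ is also sound: the $(\xi,\rho)$ Hessian has determinant $12\xi^2-4\rho^2\le-c<0$, and two-dimensional stationary phase, uniform in the angular variable, gives $|t|^{-1}$.

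The genuine gap is the region $|\xi|\gtrsim|\eta|$. One-dimensional van der Corput bounds do not multiply under Fubini. Once the inner $\eta_k$-integral is bounded by $|t\xi|^{-1/2}$ in absolute value, no oscillation is left in $\xi$, so you get only $|t|^{-1/2}$; and $\partial_\xi^3\Phi=6$ would give only $|t|^{-1/3}$ anyway. Treating $(\xi,\eta_k)$ jointly does not help either. That $2\times2$ Hessian has determinant $12\xi^2-4\eta_k^2$, which vanishes at points of the cone $|\eta|^2=3\xi^2$ such as $\eta=(\sqrt3\,\xi,0,\dots,0)$. This cone lies in your region $|\xi|\gtrsim|\eta|$, or in the transition zone you leave open.

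A sanity check shows the argument cannot be right as written: it never uses $d\ge3$. Yet for $d=2$ the cone is exactly $\{b=0\}$ in the symmetrized variables, and there the unit-frequency kernel decays only like $|t|^{-1/2}\cdot|t|^{-1/3}=|t|^{-5/6}$.

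The missing idea is to use two $\eta$-directions, which exist precisely because $d-1\ge2$. On $|\xi|\ge c|\zeta|$ one has $|\xi|\gtrsim1$. Freeze $\xi,\eta_3,\dots,\eta_{d-1}$; the $(\eta_1,\eta_2)$-integral then has phase $t\xi(\eta_1^2+\eta_2^2)$ plus a linear term. It is therefore bounded by $\lesssim|t\xi|^{-1}\|\widehat a\|_{L^1(\R^2)}\lesssim|t|^{-1}$, uniformly in $z$ and in the frozen variables, where $a$ is the amplitude in $(\eta_1,\eta_2)$. The remaining integrations are then harmless. Use a smooth split $\theta(\xi/|\zeta|)$ with $c<1/4$, so that the complementary region stays away from the cone (where $|\xi|=|\zeta|/2$). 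With this replacement your argument closes and gives a self-contained proof of the unit-annulus bound that the paper imports.
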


\begin{proof}
For $d=2$, the dispersive estimate \eqref{eq:zk-2d-global-dispersive} is given by
\cite[(2.1)--(2.2)]{LinaresPastorDrumondSilva}.

For \(d\ge3\),  by \cite[Lemma~3.2]{LinaresRamosZK}, for every smooth radial
cutoff \(\psi\) supported in a fixed annulus,
\begin{equation}\label{eq:zk-unit-annulus}
  \sup_{z\in\R^d}
  \left|
  \int_{\R^d}
  \psi(\tilde{\zeta})e^{i(t\tilde{\zeta_1}|\tilde{\zeta}|^2+z\cdot\tilde{\zeta})}\,\dd\tilde{\zeta}
  \right|
  \lesssim |t|^{-1}.
\end{equation}
Let $K_N(t,z)$ be the kernel of $U_{\ZK}(t)P_N$.
After the change of variables \(\zeta=N\tilde{\zeta}\),
\[
  K_N(t,z)
  =(2\pi)^{-d}N^d\int_{\R^d}
  e^{i(Nz\cdot\tilde{\zeta}+tN^3\tilde{\zeta_1}|\tilde{\zeta}|^2)}
  \psi(\tilde{\zeta})\,\dd\tilde{\zeta}.
\]
Applying \eqref{eq:zk-unit-annulus} with \(t\rightarrow tN^3\) gives
\[
  |K_N(t,z)|\lesssim N^d|tN^3|^{-1}
  =N^{d-3}|t|^{-1}.
\]
Hence the second inequality in \eqref{eq:zk-2d-global-dispersive}  holds.
The same argument applies to the cutoff $\psi^2$,
while the multiplier $P_N$ replaced by $P_N^2$. 
\end{proof}

\begin{proof}[Proof of Theorem~\ref{thm:intro-ZK}]
Case $d=2$ is a consequence of the first inequality in Lemma~\ref{lem:ZK-dispersive} and Lemma~\ref{thm:abstract-bessel-dispersive} (a), (c) and (d) with $h=2/3,\, P=I$ and $C_0\sim 1$. Case $d\ge 3$, similar to KP case, is a consequence of the second inequality in Lemma~\ref{lem:ZK-dispersive} and Lemma~\ref{thm:abstract-bessel-dispersive} (a) and (b) with $h=1$, $P=P_N$ and $C_0\sim N^{d-3}$. 
\end{proof}

\subsection{Necessary conditions}\label{sec:KP-necessity}

We now determine the necessary conditions on the system exponent $\beta$ for KP and ZK models. 
We first give a general lemma, extending the argument of \cite[Proposition~9]{BLN}.

\begin{lemma}
\label{lem:BLN-local-necessary}
Let $d\ge1$. Let $\Omega\subset\R^d$ be a bounded open set and
$\phi\in C^1(\Omega)$.  Define
\[E_\phi h(t,x)
  :=\int_\Omega e^{i(x\cdot\zeta+t\phi(\zeta))}h(\zeta)\,\dd\zeta.\]
 Suppose that, for some $1\le p\le\infty$, $1\le q<\infty$ and
$\beta\ge1$,  \eqref{eq:intro-bessel-conclusion} holds with $E=E_\phi$. 
Then
\begin{equation}\label{eq:BLN-local-summability-bound}
  \frac1\beta\ge \frac1{dp}+\frac1q.
\end{equation}
If, in addition, $p<\infty$ and
$\nabla\phi(\zeta_0)\ne0$ at some point $\zeta_0\in\Omega$, then
\begin{equation}\label{eq:BLN-beta-p-bound}
  \beta\le p.
\end{equation}
\end{lemma}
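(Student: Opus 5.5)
We test the estimate \eqref{eq:intro-bessel-conclusion} for $E=E_\phi$ on \emph{orthonormal} families only. Their Bessel bound is $C_B=1$, so the estimate reduces to the orthonormal-system bound
\[
\Big\|\sum_j\lambda_j|E_\phi h_j|^2\Big\|_{L^p_tL^q_x}\lesssim\|\lambda\|_{\ell^\beta}.
\]
We then adapt the two constructions of \cite[Proposition~9]{BLN}. In both we take $\lambda_j\equiv1$ on a finite index set of size $M$, so the right-hand side is $M^{1/\beta}$. We bound the left-hand side from below on a region where the $|E_\phi h_j|$ are uniformly large, and let the relevant parameter tend to infinity.

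\textbf{Step 1: proof of \eqref{eq:BLN-local-summability-bound}.} We fix a small cube $Q_0\subset\Omega$ of side $\delta$ around some point. Since $\phi\in C^1$ and $\Omega$ is bounded, we may assume $|\phi|$ is bounded on $Q_0$. For large $R$ we partition $Q_0$ into $M\sim R^d$ subcubes $Q_k$ of side $\delta/R$ and set $h_k=|Q_k|^{-1/2}\mathbf 1_{Q_k}$; these are orthonormal in $L^2(\Omega)$. Let $\zeta_k$ be the centre of $Q_k$. We also fix a time $t$ with $|t|\le c$ and a point $x$ with $|x|\le cR/\delta$.

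For such $(t,x)$ we write
\[
E_\phi h_k(t,x)=e^{i(x\cdot\zeta_k+t\phi(\zeta_k))}\int_{Q_k}e^{i(x\cdot(\zeta-\zeta_k)+t(\phi(\zeta)-\phi(\zeta_k)))}h_k(\zeta)\,d\zeta .
\]
The phase in the integral is $O(c)$, by $|x||\zeta-\zeta_k|\lesssim c$ and the $C^1$ bound on $\phi$. Hence
\[
|E_\phi h_k(t,x)|\gtrsim |Q_k|^{1/2}\sim R^{-d/2}
\]
uniformly in $k$, on a set of measure $\sim 1\cdot R^d$. Summing over $k$ gives $\sum_k|E_\phi h_k|^2\gtrsim R^d\cdot R^{-d}=1$ on that set, so the left-hand side is $\gtrsim R^{d/q}$. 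The inequality then reads $R^{d/q}\lesssim R^{d/\beta}$, which only gives $1/\beta\ge1/q$.

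To obtain the extra term $1/(dp)$ we must also gain in time. For this we make the time interval large, $|t|\le T$ with $T\sim R$, and restrict to a window moving with the group velocity. Because $\phi$ is only $C^1$, the correct scaling is to couple the time length to the spatial frequency scale. I would take $|t|\lesssim R^{1/d}$-type lengths, with the subcube sides chosen so that the linear phase approximation still holds. Balancing the counting $M$, the spatial measure and the time measure should produce $R^{1/(dp)+1/q}\lesssim R^{1/\beta}$ after renormalizing. I expect this exponent bookkeeping — choosing the subcube size versus the time window under only $C^1$ regularity, using uniform continuity of $\nabla\phi$ on a compact sub-cube — to be the main obstacle, and I would follow the BLN choices closely.

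\textbf{Step 2: proof of \eqref{eq:BLN-beta-p-bound}.} We use the nonvanishing gradient to separate the $h_j$ in time. Near $\zeta_0$ we take a small bump $g$ with $E_\phi g$ concentrated near $x=-t\nabla\phi(\zeta_0)$ over times $|t|\le1$. We then set $h_j=e^{-i(x_j\cdot\zeta+t_j\phi(\zeta))}g$, where $t_j=jL$ and $x_j=-t_j\nabla\phi(\zeta_0)\ldots$ are chosen so that the functions $E_\phi h_j=E_\phi g(\cdot-t_j,\cdot-x_j)$ are spacetime translates with essentially disjoint time supports.

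These $h_j$ are not exactly orthogonal. Their Gram matrix has off-diagonal entries $\langle h_j,h_k\rangle=\int|g|^2e^{i((x_k-x_j)\cdot\zeta+(t_k-t_j)\phi)}$. Because $\nabla\phi(\zeta_0)\ne0$, the phase $(t_k-t_j)(\phi(\zeta)-\nabla\phi(\zeta_0)\cdot\zeta)$ is not stationary-free. So instead we choose $x_j=0$ and pure time translates, where the phase $(t_k-t_j)\phi$ has nonvanishing gradient on $\supp g$. Integration by parts then gives decay $L^{-N}$, and for $L$ large the family has Bessel bound $\le2$.

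With $M$ such functions, the left-hand side is
\[
\Big(\sum_j\|E_\phi g\|_{L^q}^p\text{ on }[t_j-1,t_j+1]\Big)^{1/p}\sim M^{1/p},
\]
using $p<\infty$ and local disjointness, up to tails controlled by the spatial decay. Comparing with $C_B^{1-1/\beta}M^{1/\beta}$ with $C_B\le2$ yields $\beta\le p$ as $M\to\infty$.
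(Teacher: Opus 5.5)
Your proposal has a gap in each step. Both are repairable, but as written neither inequality is established.

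In Step 1 you stop at $1/\beta\ge 1/q$ and defer the gain in time to unspecified ``bookkeeping''. You propose a window moving with the group velocity and $R^{1/d}$-type time lengths, and neither is what is needed. The moving frame $x\approx -t\nabla\phi(\zeta_k)$ depends on $k$, so it does not in general give a common region where all $|E_\phi h_k|$ are large. In your parametrization ($M\sim R^d$ subcubes of side $\delta/R$), a time window of length $R^{1/d}$ would only yield $1/\beta\ge 1/(d^2p)+1/q$.

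The missing observation is the same mean value bound you already used for the spatial phase. Take $\overline{Q_0}\subset\Omega$ and $A:=\sup_{Q_0}|\nabla\phi|<\infty$. Then $|t(\phi(\zeta)-\phi(\zeta_k))|\le |t|A\sqrt d\,\delta/(2R)$ on $Q_k$, so the full phase stays $O(c)$ on the fixed box $\{|t|\le cR,\ |x|\le cR\}$ (constants absorbed into $c$). This box has measure $\sim R^{1+d}$, which gives $R^{1/p+d/q}\lesssim M^{1/\beta}=R^{d/\beta}$, exactly \eqref{eq:BLN-local-summability-bound}. This is the paper's argument with $K=R$: time and space windows both scale like the inverse side length, and neither uniform continuity of $\nabla\phi$ nor any balancing enters.

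In Step 2, the time-translation idea $h_j=e^{-ijL\phi}g$ and the lower bound $\gtrsim M^{1/p}$ are fine. No tails need controlling: by nonnegativity you simply drop all but the $j$-th term on the $j$-th window. The gap is almost-orthogonality. Under the stated hypothesis $\phi\in C^1$, integrating by parts in $\int|g|^2e^{i(k-j)L\phi}\,d\zeta$ even once differentiates $\nabla\phi/|\nabla\phi|^2$, so the claimed $L^{-N}$ decay is unavailable. Riemann--Lebesgue only gives off-diagonal Gram entries that are $o(1)$ with no rate, which need not be summable. Schur's test then gives no Bessel bound uniform in $M$. Your argument does work for smooth phases such as $\Phi_{\KP}$ and $\Phi_{\ZK}$, but the lemma is stated for $C^1$.

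The paper's remedy is to straighten the phase. Since $\nabla\phi(\zeta_0)\ne0$, the $C^1$ inverse function theorem gives local coordinates $\zeta=\Psi(l,\vartheta)$ with $\phi(\Psi(l,\vartheta))=l$. Choosing $h$ with $|h|^2\,d\zeta=\frac{T}{2\pi}\mathbf 1_I(l)|\varphi(\vartheta)|^2\,dl\,d\vartheta$ and $|I|=2\pi/T$ makes $(e^{-ijT\phi}h)_j$ exactly orthonormal.

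Alternatively, you can stay within your own setup and use that same change of variables. It shows that $|g|^2\,d\zeta$ pushes forward under $\phi$ to a bounded density $G$ supported in an interval of length $\ell_G$. Since $|\sum_jc_je^{-ijLl}|^2$ is $2\pi/L$-periodic with mean $\sum_j|c_j|^2$, one gets $\|\sum_jc_jh_j\|^2\le\|G\|_\infty(\ell_G+2\pi/L)\sum_j|c_j|^2$. This is a Bessel bound independent of $M$, which suffices for $\beta\le p$.
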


\begin{proof}
\emph{Step 1: The condition \eqref{eq:BLN-local-summability-bound}.}
Choose a cube $Q$ of side length $\ell>0$ with
$\overline{Q}\subset\Omega$, and set
$A=\sup_{\zeta\in Q}|\nabla\phi(\zeta)|<\infty$.  For an integer $K\ge1$, partition $Q$ into $K^d$ congruent cubes
$(Q_\nu)_\nu$ with centers $(\zeta_\nu)_\nu$ and side length $\ell/K$.  Define $g_\nu=|Q_\nu|^{-1/2}\mathbf1_{Q_\nu}$.  
Then $(g_\nu)_\nu$ is an orthonormal system in $L^2(\Omega)$.   
For $\zeta\in Q_\nu$, convexity of $Q_\nu$ and the mean value theorem give
\[
  |\phi(\zeta)-\phi(\zeta_\nu)|\le A|\zeta-\zeta_\nu|,
  \qquad
  |\zeta-\zeta_\nu|\le \frac{\sqrt d\,\ell}{2K}.
\]

Consequently, one may choose a constant $c>0$, depending only on $Q$ and
$\phi$, such that, whenever $|x|\le cK$ and $|t|\le cK$,
\[
  \left|
    x\cdot(\zeta-\zeta_\nu)
    +t\bigl(\phi(\zeta)-\phi(\zeta_\nu)\bigr)
  \right|
  \le \frac{\pi}{3},
  \qquad \zeta\in Q_\nu.
\]
Hence for such $t$ and $x$,
\begin{align*}
\sum_\nu |E_\phi g_\nu(t,x)|^2
&=
\sum_\nu |Q_\nu|^{-1}
    \left|
      \int_{Q_\nu}
      e^{i\{x\cdot(\zeta-\zeta_\nu)
      +t(\phi(\zeta)-\phi(\zeta_\nu))\}}\,\dd\zeta
    \right|^2\\
&\ge \frac14\sum_\nu |Q_\nu|
\gtrsim 1.
\end{align*}
The last estimate holds since there are $K^d$ sub-cubes and $|Q_\nu| \sim K^{-d}$.
It follows that
\[
  \left\|\sum_\nu |E_\phi g_\nu|^2\right\|_{L_t^p(\R;L_x^q(\R^d))}
  \gtrsim K^{1/p+d/q}.
\]
Applying \eqref{eq:intro-bessel-conclusion} with
$\lambda_\nu=1$ for all $\nu$ yields
\[
  K^{1/p+d/q}
  \lesssim \norm{(1,\ldots,1)}_{\ell^\beta}
  =K^{d/\beta},
\]
which shows \eqref{eq:BLN-local-summability-bound} after letting $K\to\infty$.

\emph{Step 2: The condition \eqref{eq:BLN-beta-p-bound}.}
Assume
that $p<\infty$ and that $\nabla\phi(\zeta_0)\ne0$ for some
$\zeta_0\in\Omega$.  After permuting the coordinates, we may suppose that
$\partial_{\zeta_1}\phi(\zeta_0)\ne0$.  By the inverse function theorem,
after restricting to a smaller open subset $\Omega_0$ with 
$\overline{\Omega_0}\subset\Omega$, the map
\[
  \zeta\longmapsto \bigl(\phi(\zeta),\zeta_2,\ldots,\zeta_d\bigr)
\]
is a $C^1$ diffeomorphism.  Thus there are open sets
$I_0\subset\R$ and $\Theta\subset\R^{d-1}$ and a $C^1$ diffeomorphism
$\Psi:I_0\times \Theta\to\Omega_0$ such that
\[
  \phi(\Psi(l,\vartheta))=l,
  \qquad
  \zeta=\Psi(l,\vartheta),
  \qquad
  \dd\zeta=J(l,\vartheta)\,\dd l\dd\vartheta,
  \qquad J(l,\vartheta)>0.
\]
Choose a compact interval $I=[a,a+2\pi/T]\subset I_0$ for some $T>0$ and a function
$\varphi\in C_c^\infty(\Theta)$ with $\norm{\varphi}_{L^2(\Theta)}=1$.  Define $h\in L^2(\Omega)$ by
\[
  h(\Psi(l,\vartheta))
  =\left(\frac{T}{2\pi}\right)^{1/2}
  \mathbf1_I(l)\varphi(\vartheta)J(l,\vartheta)^{-1/2},
\]
and set $h=0$ outside $\Omega_0$.  Then
$\norm{h}_{L^2(\Omega)}=1$.  For $j\in\Z$, set
\[
  g_j(\zeta)=e^{-ijT\phi(\zeta)}h(\zeta).
\]
A change of variables gives
\[
  \ip{g_j}{g_k}_{L^2(\Omega)}
  =\frac{T}{2\pi}\int_a^{a+2\pi/T}e^{-i(j-k)Ts}\,\dd s
  =\delta_{jk}.
\]
So $(g_j)_{j\in\Z}$ is an orthonormal system.  Moreover,
\[
  E_\phi g_j(t,x)=E_\phi h(t-jT,x).
\]
Since $h$ is supported in the bounded set $\Omega_0$, the Hausdorff--Young inequality and dominated
convergence theorem show that the map $t\mapsto E_\phi h(t,\cdot)$ is
continuous in $L_x^{2q}(\R^d)$.  Also, $E_\phi h(0,\cdot)$ is not
identically zero.  Hence there exist
$\delta,c_0>0$ such that
\[
  \norm{E_\phi h(t,x)}_{L_x^{2q}(\R^d)}^2\ge c_0,
  \qquad |t|\le\delta.
\]
We may assume that $0<\delta<T/2$, so the
intervals $[jT-\delta,jT+\delta]$ are pairwise disjoint.  Therefore, for
any positive integer $M$,
\begin{align*}
  \left\|\sum_{j=1}^M|E_\phi g_j|^2\right\|_{L_t^p(\R;L_x^q(\R^d))}^p
  &\ge
  \sum_{j=1}^M\int_{jT-\delta}^{jT+\delta}
  \norm{E_\phi g_j(t,x)}_{L_x^{2q}(\R^d)}^{2p}\,\dd t
  \\
  &=
  M\int_{-\delta}^{\delta}
  \norm{E_\phi h(t,x)}_{L_x^{2q}(\R^d)}^{2p}\,\dd t
\  \gtrsim M.
\end{align*}
Applying \eqref{eq:intro-bessel-conclusion} to the orthonormal system $(g_j)^M_{j=1}$ with
$\lambda_j=1$ yields
\[
  M^{1/p}\lesssim M^{1/\beta},
\]
which gives
\eqref{eq:BLN-beta-p-bound} after letting $M\to\infty$.
\end{proof}

\begin{proposition}
\label{prop:KP-ZK-necessary}
For KP/ZK models, if the Bessel-family Strichartz estimate holds, then 
\begin{equation*}
\emph{(KP)} \quad \beta\le
  \begin{cases}
    \beta(q),&1\le q<3,\\[1.2ex]
    p,&3\le q<\infty.
  \end{cases}  
  \qquad
\emph{(2D-ZK)} \quad \beta\le
  \begin{cases}
    \dfrac{3q}{q+2},&1\le q<4,\\[1.2ex]
    p,&4\le q<\infty.
  \end{cases}
\end{equation*}

\begin{equation}\label{eq:ZK-combined-necessary}
\emph{($3^{\scriptscriptstyle +}$D-ZK)} \quad \beta\le
  \begin{cases}
    \dfrac{dq}{q+d-1},
      &1\le q< \dfrac{2d-1}{d-1},\\[1.5ex]
    p,
      &\dfrac{2d-1}{d-1}\le q<\infty;
  \end{cases} \quad d\ge 3.
\end{equation}
\end{proposition}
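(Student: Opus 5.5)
The plan is to reduce each model to the local oscillatory extension operator $E_\phi$ of Lemma~\ref{lem:BLN-local-necessary}. After that, the two necessary conditions \eqref{eq:BLN-local-summability-bound} and \eqref{eq:BLN-beta-p-bound} are combined with the scaling relation between $p$ and $q$ for each model.

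\emph{Reduction.} Let $\Omega$ be a bounded open set of frequencies and $h\in L^2(\Omega)$ extended by zero. Set $f=\mathcal F^{-1}h$. Then
\[
U(t)f(z)=(2\pi)^{-d/2}E_\phi h(t,z),\qquad \phi=\Phi_{\KP}\ \text{or}\ \Phi_{\ZK}.
\]
Since $\mathcal F$ is unitary, orthonormal systems in $L^2(\Omega)$ are mapped to orthonormal systems in $L^2(\R^d)$ with $C_B=1$. So the assumed Bessel-family estimate gives \eqref{eq:intro-bessel-conclusion} for $E=E_\phi$, at least on orthonormal systems, which is all the proof of Lemma~\ref{lem:BLN-local-necessary} uses. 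The choice of $\Omega$ is as follows.
\begin{itemize}
\item For KP and 2D-ZK, take $\Omega$ a small ball around $(1,0)$, avoiding $\xi=0$, so that $\phi\in C^1(\Omega)$.
\item For ZK with $d\ge3$ the estimate \eqref{eq:higer-zk-strichartz} is frequency-localized. Take $N=1$ and $\Omega$ a small ball around $e_1$ inside $\{3/4<|\zeta|<4/3\}$, where $\psi\equiv1$, so that $P_1f=f$ and the factor $N^{(d-3)(1-1/q)}$ is harmless.
\end{itemize}
In every case $\nabla\phi(\zeta_0)\neq0$ at $\zeta_0=(1,0)$:
\[
\partial_\xi\Phi_{\KP}(1,0)=3,\qquad \partial_\xi\Phi_{\ZK}(1,0)=3.
\]
Hence Lemma~\ref{lem:BLN-local-necessary} yields
\[
\frac1\beta\ge\frac1{dp}+\frac1q,\qquad\text{and}\qquad \beta\le p \ \text{ whenever } p<\infty.
\]

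\emph{Computation.} Insert the admissibility relations.
\begin{itemize}
\item KP ($d=2$, $1/p=1-1/q$): the first bound gives $1/\beta\ge(q+1)/(2q)$, i.e.\ $\beta\le\beta(q)$. Moreover $\beta(q)\le p=q/(q-1)$ exactly when $q\le3$.
\item 2D-ZK ($1/p=\tfrac23(1-1/q)$): the first bound gives $\beta\le 3q/(q+2)$. Moreover $3q/(q+2)\le p=3q/(2q-2)$ exactly when $q\le4$.
\item $3^{+}$D-ZK ($1/p=1-1/q$): the first bound gives $\beta\le dq/(q+d-1)$. Moreover $dq/(q+d-1)\le q/(q-1)$ exactly when $q(d-1)\le 2d-1$.
\end{itemize}
In each case $\beta$ is bounded by the minimum of the two constraints, which is precisely the stated piecewise bound.

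The only point needing care is the case $p=\infty$, where \eqref{eq:BLN-beta-p-bound} is unavailable. This corresponds to $q=1$, which lies in the first regime where only \eqref{eq:BLN-local-summability-bound} is used, so it causes no difficulty. Otherwise the main obstacle is just verifying the hypotheses of Lemma~\ref{lem:BLN-local-necessary}: that $\phi$ is $C^1$ on $\Omega$, that $\Omega$ is bounded, and that the frequency cutoff is trivial on $\Omega$. The choices of $\Omega$ above secure all three.
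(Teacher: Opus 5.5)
Your proposal is correct and follows the paper's proof essentially step for step. Like the paper, you transfer orthonormal systems from $L^2(\Omega)$ to $L^2(\R^d)$ by the inverse Fourier transform, with $\Omega$ a small neighborhood of $(1,0,\ldots,0)$ (taken inside $\{\psi\equiv1\}$ with $N=1$ when $d\ge3$). You then invoke Lemma~\ref{lem:BLN-local-necessary} via $\partial_\xi\phi=3\neq0$ and insert each model's scaling relation. Your explicit arithmetic for the crossovers at $q=3$, $q=4$ and $q=(2d-1)/(d-1)$, and your observation that $p=\infty$ occurs only at $q=1$ in the first regime, fill in details the paper leaves implicit.
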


\begin{proof}
Fix a bounded open  set $\Omega\subset\R^d$ in frequency, to be chosen later,
and let $(h_j)_j$ be an arbitrary finite orthonormal system in
$L^2(\Omega)$.  Denote by $\widetilde h_j$ the extension of $h_j$ by zero
to $\R^d$, and define $f_j=\widecheck{\widetilde{h_j}} \in L^2(\R^d)$. 
Because the Fourier transform is
unitary on $L^2$, the family $(f_j)_j$ is orthonormal in
$L^2(\R^d)$.
For the KP case, choose $\Omega$ to be a sufficiently small neighborhood of
$(1,0)$ contained in $\{(\xi,\eta):\xi\ne0\}$ and take
$\phi=\Phi_{\KP}$.  For the 2D-ZK case, choose $\Omega$ to be a sufficiently small
neighborhood of $(1,0)$ and set $\phi=\Phi_{\ZK}$.  For the
$3^{\scriptscriptstyle +}$D-ZK case, choose $\Omega$ to be a small neighborhood of
$(1,0,\ldots,0)$ contained in the region on which the multiplier
$P_1$ is identically one and set $\phi=\Phi_{\ZK}$. In all three
cases the phase $\phi \in C^1(\Omega)$ and has nonvanishing gradient at the
chosen center, since
\[
  \partial_\xi\Phi_{\KP}(1,0)=3,
  \qquad
  \partial_\xi\Phi_{\ZK}(1,0,\ldots,0)=3.
\]
In particular, in all cases, \(U(t)\) denotes the operator appearing in the corresponding Strichartz estimate, and
\[
  U(t)f_j(x)
  =(2\pi)^{-d/2}\int_\Omega
    e^{i(x\cdot\zeta+t\phi(\zeta))}h_j(\zeta)\,\dd\zeta
  =(2\pi)^{-d/2}E_\phi h_j(t,x).
\]
Therefore, the necessary conditions \eqref{eq:ZK-combined-necessary} follow directly from Lemma~\ref{lem:BLN-local-necessary}, 
together with \eqref{eq:intro-bessel-conclusion} and the scaling relation on the sharp line in each case.
\end{proof}

\section{Improvement for the radial Schr\"odinger equation}\label{sec:radial}

We finally turn to the Schr\"odinger
equation \eqref{eq:Scheq}, 
restricted to radial initial
data. 
\cite[Theorem~8]{FrankSabin} gives orthonormal Strichartz estimates in the common (nonradial) range
\[
 \frac{2}{p}+\frac{d}{q}=d,
 \qquad
 1\le q<\frac{d+1}{d-1}, \quad d\ge 1.
\]
For $d\ge 2$, radial symmetry in the classical single-function case gains a larger range by ruling out angular Knapp-type
concentration. 
However, the Schr\"odinger propagator satisfies the same (sharp)
dispersive estimate for radial or nonradial data. Hence the dispersive-to-Strichartz principle in Lemma~\ref{thm:abstract-bessel-dispersive} used
for KP/ZK models, cannot detect the improvement caused by radial symmetry.
Accordingly, our argument is formulated through a robust Schatten bound of the 
frequency-localized $T^*T$ operator.
For $N>0$,  
set the extension operator
\begin{equation}\label{eq:EN}
  E_Nf(t,x):=U_S(t)P_Nf(x),
  \qquad
  f\in L^2_{\rad}(\R^d).
\end{equation}
Note that both the Schr\"odinger
propagator $U_S(t)$ and the smooth multiplier $P_N$ preserve the radiality. 
For a space--time function $V$, define the $T^*T$ operator
\begin{equation}\label{eq:AVN}
  A_{V,N}^{\rad}:=E_N^*M_VE_N,
\end{equation}
where $M_V$ is the multiplication operator by $V$. 
Then the main
step for desired Strichartz estimates is to establish suitable Schatten bounds for $A_{V,N}^{\rad}$, in view of Lemma~\ref{thm:intro-abstract}. 
Besides, we can restrict $V$ to be radial in the spatial variable, and then $A_{V,N}^{\rad}$ also preserves radiality.  If
$f,g\in L^2_{\rad}(\R^d)$, then
$U_S(t)f\,\overline{U_S(t)g}$ is radial in $x$.  For $V\in L_t^{p'}(\R;L_x^{q'}(\R^d))$, define its spherical average as a radial function by
\[
 V_{\mathrm{avg}}(t,r)
 =
 \frac{1}{|\mathbb S^{d-1}|}
 \int_{\mathbb S^{d-1}}V(t,r\omega)\,\dd\omega.
\]
Then by rotation it is easy to check
\[
 \int_{\R\times\R^d}V(t,x)U_S(t)f(x)
 \overline{U_S(t)g(x)}\,\dd x\dd t
 =
 \int_{\R\times\R^d}V_{\mathrm{avg}}(t,|x|)U_S(t)f(x)
 \overline{U_S(t)g(x)}\,\dd x\dd t.
\]
Moreover, Jensen's inequality yields
\[
 \norm{V_{\mathrm{avg}}}_{
 L_t^{p'}\bigl(\R;
 L_r^{q'}((0,\infty),r^{d-1}\dd r)\bigr)}
 \lesssim
 \norm{V}_{L_t^{p'}(\R;L_x^{q'}(\R^d))}.
\]

We will first prove a new key diagonal endpoint Schatten estimate, then obtain the remaining estimates by interpolation, and finally establish some
necessary conditions for the system exponents $\beta$ with fixed $p,q$.

\subsection{The diagonal radial Schatten estimate and preliminary}
\label{subsec:radial-diagonal-endpoint}

Set 
\[  \beta_*:=\beta(q_*)=\frac{2d+1}{2d},\]
with $\beta(\cdot)$ from \eqref{eq:betaq} and $q_*$ from \eqref{eq:intro-radial-parameters}.
Then 
$\beta_*'=2d+1=2q_*'$.  
\begin{theorem}
\label{thm:radial-diagonal-schatten}
Let $d\ge2$. For every $N>0$ and $V\in L_{t,x}^{q_*'}(\R\times\R^d)$,
for $A_{V,N}^{\rad}$ defined in \eqref{eq:AVN},
\begin{equation}\label{eq:radial-diagonal-schatten}
  \norm{A_{V,N}^{\rad}}_{\Sch^{\beta_*'}(L^2_{\rad}(\R^d))}
  \lesssim
  N^{-\frac{2(d-1)}{2d+1}}
  \norm{V}_{L_{t,x}^{q_*'}(\R\times\R^d)}.
\end{equation}
\end{theorem}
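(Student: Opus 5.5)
Since $\beta_*'=2d+1$ is an odd integer, the plan is to compute the Schatten norm through a trace formula rather than by interpolation. By scaling, $x\mapsto Nx$ and $t\mapsto N^2t$, we may take $N=1$: the factor $N^{-2(d-1)/(2d+1)}$ is exactly what the change of variables produces in $\|V\|_{L^{q_*'}_{t,x}}$ with $q_*'=(2d+1)/2$. We may also assume $V\ge0$ and $V$ radial in $x$, replacing $V$ by $V_{\mathrm{avg}}$ as explained above. Then $A:=A^{\rad}_{V,1}=E_1^*M_VE_1\ge0$, so
\[
\|A\|_{\Sch^{2d+1}}^{2d+1}=\Tr(A^{2d+1})=\Tr\big((M_{V^{1/2}}E_1E_1^*M_{V^{1/2}})^{2d+1}\big).
\]
Here $E_1E_1^*$ acting on radial space-time functions has the kernel
\[
K(t,r;s,\rho)=\int_0^\infty e^{-i(t-s)k^2}\chi(k)^2\,\widetilde J(kr)\,\widetilde J(k\rho)\,k^{d-1}\,dk,
\]
where $\widetilde J(u)=u^{-(d-2)/2}J_{(d-2)/2}(u)$ is the radial Fourier kernel. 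This yields a multi-product trace formula: an integral over $(t_i,r_i)_{i=1}^{2d+1}$ of $\prod_i V(t_i,r_i)r_i^{d-1}$ times the cyclic product $\prod_i K(t_i,r_i;t_{i+1},r_{i+1})$.

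The key input is a pointwise kernel bound that encodes the radial gain. Using the Bessel asymptotics $|\widetilde J(u)|\lesssim\langle u\rangle^{-(d-1)/2}$, together with stationary phase or integration by parts in $k$ (the phase is $-(t-s)k^2\pm kr\pm k\rho$ and the frequency is localized at $k\sim1$), I aim for
\[
|K(t,r;s,\rho)|\lesssim \langle r\rangle^{-\frac{d-1}{2}}\langle\rho\rangle^{-\frac{d-1}{2}}\,\langle t-s\rangle^{-1/2},
\]
with an additional decay factor $\langle |r\pm\rho|-2|t-s|\rangle^{-M}$ capturing the one-dimensional wave-packet localization. The point is that, after removing the weights $r^{(d-1)/2}$, the kernel behaves like a one-dimensional Schrödinger kernel in the variable $r$. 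Absorbing $r_i^{d-1}\langle r_i\rangle^{-(d-1)}$ into the weight gives the bound
\[
\int\prod_i W(t_i,r_i)\prod_i k(t_i-t_{i+1},r_i,r_{i+1})\,dt\,dr,\qquad W=V\cdot \min(r,r^{d-1})\text{-type weight},
\]
which is a multilinear weighted fractional integral.

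The concluding step, which I expect to be the main obstacle, is to bound this cyclic $(2d+1)$-linear form by $\prod_i\|V\|_{L^{q_*'}(\R\times\R^d)}$, with the measure $r^{d-1}dr\,dt$. For this I plan to apply Hölder to each factor, then a multilinear Brascamp–Lieb or Hardy–Littlewood–Sobolev inequality along the cycle. The exponent count is what forces $2d+1$: converting $L^{q_*'}(r^{d-1}dr)$ into an unweighted one-dimensional space uses exactly the $(d-1)/2$ decay from each kernel endpoint, and the remaining time decay $|t_i-t_{i+1}|^{-1/2}$ matches the one-dimensional Schrödinger endpoint, as in Frank–Sabin's $d=1$ diagonal estimate at $q=3/2$, now shifted by the weight. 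The delicate parts are the region $r\lesssim1$, where the weight behaves like $r^{d-1}$ rather than $r^{(d-1)}\langle r\rangle^{-(d-1)}$, and proving that the cyclic integral with mixed weights is finite. Near the origin I would split dyadically in $r_i$ and use the trivial bound $|K|\lesssim1$. If the pointwise approach loses a logarithm, the fallback is to run a Stein analytic family in $z$ on $(t-s)^{-1-z}$, as in Step 2 of Lemma~\ref{thm:abstract-bessel-dispersive}, but with the radial kernel bound replacing the dispersive one.
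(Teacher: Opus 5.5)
There is a genuine gap, at exactly the step you flag as the main obstacle. You bound the cyclic trace by the absolute-value form $\int\prod_i V(t_i,r_i)r_i^{d-1}\prod_i|K(t_i,r_i;t_{i+1},r_{i+1})|$, and that form is \emph{not} controlled by $\norm{V}_{L^{q_*'}_{t,x}}^{2d+1}$. So no H\"older, Brascamp--Lieb or HLS argument can close it.

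First, the extra factor $\langle|r\pm\rho|-2|t-s|\rangle^{-M}$ is false for a fixed annular cutoff. For $r,\rho\gg1$ the stationary point $k=\pm(r\pm\rho)/(2(t-s))$ may lie anywhere in $\supp\chi$. Hence $|K|\sim(r\rho)^{-\frac{d-1}{2}}|t-s|^{-1/2}$ on the whole cone $\frac34<\frac{|r\pm\rho|}{2|t-s|}<\frac43$, whose width is comparable to $|t-s|$.

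Now take $V=\mathbf 1_{[0,T]}(t)\mathbf 1_{[T,2T]}(|x|)$ and $\ell=2d+1$. Consider configurations with $(t_i,r_i)$ near the characteristic $r=2t+T$, $t_i\in[0,T/4]$, and $|t_i-t_{i+1}|\gtrsim T$ cyclically. They form a set of measure $\sim T^{2\ell}$ on which the $r-\rho$ term is stationary (at $k\approx1$) and the $r+\rho$ term is not. There $\prod_i r_i^{d-1}|K_i|\gtrsim T^{-\ell/2}$, so your form is $\gtrsim T^{3\ell/2}=T^{3d+3/2}$.

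On the other hand $\norm{V}_{L^{q_*'}_{t,x}}^{\ell}\sim (T^{d+1})^{2}=T^{2d+2}$. The true trace obeys this: $\Tr(A^\ell)\le\norm{A}^{\ell-1}\Tr A\lesssim T^{2d}\cdot T^2$. The phase $e^{i(r-\rho)^2/4(t-s)}$ must therefore be exploited. The same computation ($T^{9/2}$ versus $T^4$ for $V=\mathbf 1_{[0,T]^2}$) shows that the $d=1$ Frank--Sabin diagonal estimate you cite is false in absolute-value form; that is why it is proved by complex interpolation.

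Your fallback does not repair this as stated. The Step-2 family only moves the power of $|t-s|$. With the $|t-s|^{-1/2}$ radial bound, the HLS step at the $\Sch^2$ endpoint is admissible only for Schatten exponents in $(2,3)$, whereas you need $\Sch^{2d+1}$ with $2d+1\ge5$.

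The missing idea is to keep the time variable at the operator level and take absolute values only in $r$. The paper's steps are:
\begin{enumerate}
\item Conjugate $A^{\rad}_{V,1}$ to $\mathcal E^*M_V\mathcal E$ on $L^2((0,\infty))$, where $\mathcal E$ has the bounded amplitude $(r\rho)^{1/2}J_\nu(r\rho)$ and $V$ is measured in $L^{q_*'}(r^{d-1}dr\,dt)$.
\item Split into $r\le8$ and $r\ge4$, and in the exterior write $z^{1/2}J_\nu(z)=\sum_\pm e^{\pm iz}(\mathfrak a_\pm+\mathfrak b_\pm(z))$.
\item With $X_\pm=M_{V^{1/2}}\mathcal E_\pm$, view $X_\pm X_\pm^*$ as an integral operator in $r$ with $\Sch^2(L^2_t)$-valued kernel $D_{r,y}=M_{V(\cdot,r)^{1/2}}(\kappa_{r,y}*\,\cdot\,)M_{V(\cdot,y)^{1/2}}$.
\item Interpolate the $\Sch^\infty$ bound (Plancherel in $t$, $\widehat{\kappa_{r,y}}\in L^\infty$; this is where the time oscillation enters) with the $\Sch^2$ bound (van der Corput in $\mu=\rho^2$, $\norm{\kappa_{r,y}}_{L^\infty}\lesssim\langle r-y\rangle^{-1/2}$). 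This gives $\norm{D_{r,y}}_{\Sch^{2d+1}}\lesssim\langle r-y\rangle^{-\frac1{2d+1}}F(r)^{1/2}F(y)^{1/2}$ with $F(r)=\norm{V(\cdot,r)}_{L^{q_*'}_t}$ (Lemma~\ref{lem:radial-time-schatten}).
\item The operator-valued cyclic trace formula (Lemma~\ref{lem:operator-valued-cyclic-trace}) and Schatten H\"older reduce $\Tr((X_\pm X_\pm^*)^{2d+1})$ to $\int\prod_jF(r_j)\prod_j|r_j-r_{j+1}|^{-\frac1{2d+1}}\,dr$.
\item The Stein--Weiss-type cyclic inequality (Lemma~\ref{lem:radial-cyclic-weighted}) bounds this by $\norm{r^{\theta}F}_{L^{q_*'}}^{2d+1}$ with $\theta q_*'=d-1$, i.e.\ exactly by $\bigl(\int|V|^{q_*'}r^{d-1}\,dr\,dt\bigr)^{(2d+1)/q_*'}$.
\end{enumerate}
On the example above this route yields the correct size $T^{2}\cdot T^{\ell-1}$. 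The region $r\lesssim1$ is handled separately by Kato--Seiler--Simon at each fixed $r$ followed by Minkowski in $r$.
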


We first establish several auxiliary lemmas. The first one is a general uniform one-dimensional Schatten estimate.

\begin{lemma}
\label{lem:radial-time-schatten}
Let $I\subset(0,\infty)$ be a fixed compact interval, and let
$(h_\alpha)_\alpha\subset C^1(I)$ satisfy
\[
  \sup_\alpha
  \bigl(\norm{h_\alpha}_{L^\infty(I)}
  +\norm{h_\alpha'}_{L^\infty(I)}\bigr)<\infty.
\]
For $z,t\in\R$, define
\[
  \kappa_{z,\alpha}(t)
  =\int_I e^{iz\rho-it\rho^2}h_\alpha(\rho)\,\dd\rho, \qquad \mathcal{C}_{z,\alpha}f=\kappa_{z,\alpha}*f, \quad \forall\,f\in L^2(\R).
\]
Then, uniformly in $\alpha$, for every $\gamma\ge2$, $z\in \R$, and $a,b\in L^{\gamma}(\R)$,
\begin{equation}\label{eq:radial-time-schatten}
  \norm{M_a\mathcal{C}_{z,\alpha}M_b}_{\Sch^{\gamma}(L^2(\R))}
  \lesssim_{\gamma,I}
  \langle z\rangle^{-1/\gamma}
  \norm{a}_{L^{\gamma}(\R)}\norm{b}_{L^{\gamma}(\R)}.
\end{equation}
\end{lemma}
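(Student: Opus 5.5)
The proof interpolates between the Hilbert--Schmidt class $\Sch^2$ and the operator norm $\Sch^\infty$. The whole gain $\langle z\rangle^{-1/\gamma}$ comes from the $\Sch^2$ endpoint, where the operator norm is controlled by a uniform pointwise bound $\sup_t|\kappa_{z,\alpha}(t)|\lesssim\langle z\rangle^{-1/2}$. The $\Sch^\infty$ endpoint carries no decay in $z$ and costs nothing. Stein interpolation in the weights $a,b$ then produces the exponent $\frac12\cdot\frac2\gamma=\frac1\gamma$. All constants will depend only on $I$ and on $\sup_\alpha(\norm{h_\alpha}_{L^\infty}+\norm{h_\alpha'}_{L^\infty})$, which gives uniformity in $\alpha$.

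\emph{The $\Sch^\infty$ endpoint.} Substitute $\sigma=\rho^2$ in the definition of $\kappa_{z,\alpha}$:
\[
\kappa_{z,\alpha}(t)=\int_{I^2}e^{-it\sigma}\,e^{iz\sqrt\sigma}\,\frac{h_\alpha(\sqrt\sigma)}{2\sqrt\sigma}\,\dd\sigma ,
\]
where $I^2=\{\rho^2:\rho\in I\}$. Thus $\kappa_{z,\alpha}$ is, up to a constant, the inverse Fourier transform of the compactly supported function $e^{iz\sqrt\sigma}h_\alpha(\sqrt\sigma)(2\sqrt\sigma)^{-1}\mathbf 1_{I^2}(\sigma)$. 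This function is bounded by $\norm{h_\alpha}_\infty/(2\min I)$, uniformly in $z$ and $\alpha$, because $I\subset(0,\infty)$ is compact. By Plancherel, $\mathcal C_{z,\alpha}$ is bounded on $L^2(\R)$ uniformly, and hence
\[
\norm{M_a\mathcal C_{z,\alpha}M_b}_{\Sch^\infty}\lesssim\norm{a}_{L^\infty}\norm{b}_{L^\infty}.
\]

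\emph{The $\Sch^2$ endpoint.} The Hilbert--Schmidt norm satisfies
\[
\norm{M_a\mathcal C_{z,\alpha}M_b}_{\Sch^2}^2=\iint|a(t)|^2\,|\kappa_{z,\alpha}(t-s)|^2\,|b(s)|^2\,\dd s\,\dd t\le\norm{\kappa_{z,\alpha}}_{L^\infty}^2\norm{a}_{L^2}^2\norm{b}_{L^2}^2 .
\]
So the key step, and the main obstacle, is the uniform oscillatory bound $|\kappa_{z,\alpha}(t)|\lesssim\langle z\rangle^{-1/2}$ for all $t$. The phase $\varphi(\rho)=z\rho-t\rho^2$ has $\varphi'=z-2t\rho$ and $\varphi''=-2t$. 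For $|z|\lesssim1$ the trivial bound $|I|\norm{h_\alpha}_\infty$ suffices, so assume $|z|\gg1$ and split in $t$.

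If $|t|\in[c|z|,C|z|]$, where the constants are chosen from $\min I$ and $\max I$ so that this range contains every $t$ with $2t\rho=z$ for some $\rho\in I$, then van der Corput's lemma with $|\varphi''|=2|t|$, together with the bounds on $h_\alpha$ and $h_\alpha'$, gives $|\kappa|\lesssim|t|^{-1/2}\sim|z|^{-1/2}$.

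Otherwise $|z-2t\rho|\gtrsim|z|+|t|$ on $I$. The first-derivative van der Corput lemma applies, since $\varphi'$ is monotone, and with the amplitude $h_\alpha$ of bounded variation it gives $|\kappa|\lesssim|z|^{-1}$. This yields $\norm{M_a\mathcal C_{z,\alpha}M_b}_{\Sch^2}\lesssim\langle z\rangle^{-1/2}\norm{a}_{L^2}\norm{b}_{L^2}$.

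\emph{Interpolation.} Fix $\gamma\in(2,\infty)$ (the case $\gamma=2$ is the previous step) and $a,b$ simple with $\norm a_\gamma=\norm b_\gamma=1$. Consider the analytic family
\[
T_w=M_{a_w}\,\mathcal C_{z,\alpha}\,M_{b_w},\qquad a_w=\operatorname{sgn}(a)\,|a|^{\gamma(1-w)/2},\quad b_w=\operatorname{sgn}(b)\,|b|^{\gamma(1-w)/2},
\]
on the strip $0\le\Re w\le1$. On $\Re w=0$ we have $\norm{a_w}_2^2=\norm{a}_\gamma^\gamma=1$, so the $\Sch^2$ bound gives $\norm{T_w}_{\Sch^2}\lesssim\langle z\rangle^{-1/2}$. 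On $\Re w=1$ we have $|a_w|=1$ on the support of $a$, so the $\Sch^\infty$ bound gives $\norm{T_w}_{\Sch^\infty}\lesssim1$. At $w=1-2/\gamma$ we recover $T_w=M_a\mathcal C_{z,\alpha}M_b$. Stein's interpolation theorem for Schatten classes (boundedness on the strip is clear for simple $a,b$) then gives the bound $\langle z\rangle^{-\frac12\cdot\frac2\gamma}=\langle z\rangle^{-1/\gamma}$ in $\Sch^\gamma$. The general case $a,b\in L^\gamma$ follows by density.
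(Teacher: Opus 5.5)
Your proof is correct. It has the same skeleton as the paper's: uniform $L^2$-boundedness of $\mathcal C_{z,\alpha}$ via Plancherel gives the $\Sch^\infty$ endpoint, the Hilbert--Schmidt bound follows from $\sup_t|\kappa_{z,\alpha}(t)|\lesssim\langle z\rangle^{-1/2}$, and complex interpolation in the weights finishes. Your explicit family $a_w=\operatorname{sgn}(a)|a|^{\gamma(1-w)/2}$ is just a concrete version of the paper's ``bilinear complex interpolation''.

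The one genuine difference is how you prove the oscillatory bound. You stay in the variable $\rho$, where the phase $z\rho-t\rho^2$ has curvature $-2t$. This forces a split into two regimes:
\begin{enumerate}[label=\textup{(\roman*)}]
\item the near-stationary regime $|t|\sim|z|$, handled by second-derivative van der Corput, giving $|t|^{-1/2}\sim|z|^{-1/2}$;
\item the nonstationary regime, handled by the first-derivative test, giving $|z|^{-1}$.
\end{enumerate}
For regime (ii), the window $[c|z|,C|z|]$ must contain the stationary set with a margin, not merely contain it. Take $c<1/(2\max I)$ and $C>1/(2\min I)$ strictly, e.g.\ $c=1/(4\max I)$ and $C=1/\min I$; only then does $|z-2t\rho|\gtrsim|z|+|t|$ genuinely hold on $I$ for $|t|\notin[c|z|,C|z|]$.

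The paper avoids the case split by substituting $\mu=\rho^2$ in the $\Sch^2$ step as well. The phase becomes $z\sqrt\mu-t\mu$, whose second derivative $-z\mu^{-3/2}/4$ satisfies $\gtrsim|z|$ on $I^2$ independently of $t$. A single application of van der Corput, with the uniformly $C^1$ amplitude $h_\alpha(\sqrt\mu)/(2\sqrt\mu)$, then gives $\sup_t|\kappa_{z,\alpha}(t)|\lesssim|z|^{-1/2}$ directly.

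The paper's substitution is shorter and makes clear that the curvature behind the $\langle z\rangle^{-1/2}$ gain comes from $z$, not $t$. Your route is the more standard stationary-phase bookkeeping. It also yields slightly more, namely $|z|^{-1}$ decay away from $|t|\sim|z|$, though that is not needed for the lemma.
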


\begin{proof}
We first estimate the $\Sch^\infty$ norm.  
Taking the Fourier transform, 
\[
  \widehat{\kappa_{z,\alpha}}(\sigma)
  =(2\pi)^{1/2}\mathbf 1_{-I^2}(\sigma)
  e^{iz\sqrt{-\sigma}}
  \frac{h_\alpha(\sqrt{-\sigma})}{2\sqrt{-\sigma}},
\]
where $-I^2:=\{-\rho^2:\rho\in I\}$.
Since $I$ is a compact subinterval of $(0,\infty)$ and
$(h_\alpha)_\alpha$ is uniformly bounded on $I$, 
$\widehat{\kappa_{z,\alpha}}$ is also uniformly bounded.
Consequently, by Plancherel's theorem,
\begin{align}
  \norm{M_a\mathcal{C}_{z,\alpha}M_b}_{\Sch^\infty}
  & \le \norm{M_a}_{L^2(\R)\to L^2(\R)}\norm{\mathcal{C}_{z,\alpha}}_{L^2(\R)\to L^2(\R)}\norm{M_b}_{L^2(\R)\to L^2(\R)} \notag
  \\ & \lesssim \norm{a}_{L^\infty(\R)}\norm{b}_{L^\infty(\R)}
    \norm{\widehat{\kappa_{z,\alpha}}}_{L^\infty(\R)} \notag
  \\ & \lesssim \norm{a}_{L^\infty(\R)}\norm{b}_{L^\infty(\R)}.
  \label{eq:radial-time-operator-endpoint}
\end{align}

We next estimate the $\Sch^2$ norm.
By change of variables
$\mu=\rho^2$, 
\[
  \kappa_{z,\alpha}(t)
  =\int_{I^2} e^{i\Phi_{z,t}(\mu)}
  \widetilde h_\alpha(\mu)\,\dd\mu,
\]
where
\[
  \widetilde h_\alpha(\mu)
  :=\frac{h_\alpha(\sqrt\mu)}{2\sqrt\mu},
  \qquad
  \Phi_{z,t}(\mu):=z\sqrt\mu-t\mu,
  \qquad \mu\in I^2.
\]
In particular,
$(\widetilde h_\alpha)_\alpha$ is
supported in the fixed compact interval $I^2$ and is uniformly bounded in $C^1(I^2)$.  
If $|z|\ge1$, the phase $\Phi_{z,t}(\mu)$ satisfies
\[
  |\Phi''_{z,t}(\mu)|=\frac{|z|}{4\mu^{3/2}}\gtrsim |z|
\]
uniformly on $I^2$.  Van der Corput's lemma (see, e.g.,
\cite[Chapter~VIII]{SteinHA}) therefore gives
\[
  \sup_{t\in\R}|\kappa_{z,\alpha}(t)|\lesssim |z|^{-1/2}, \qquad |z|\ge1.
\]
For $|z|<1$, 
\[
  |\kappa_{z,\alpha}(t)|
  \le \int_I|h_\alpha(\rho)|\,\dd\rho
  \le |I|\norm{h_\alpha}_{L^\infty(I)}
  \lesssim1.
\]
Hence using the Japanese bracket
\begin{equation*}
  \norm{\kappa_{z,\alpha}}_{L^\infty(\R)}
  \lesssim \langle z\rangle^{-1/2}.
\end{equation*}
The integral kernel of $M_a\mathcal{C}_{z,\alpha}M_b$ is
$a(t)\kappa_{z,\alpha}(t-s)b(s)$.  It follows that
\begin{align}
  \norm{M_a\mathcal{C}_{z,\alpha}M_b}_{\Sch^2}^2
  &=\iint_{\R^2}|a(t)|^2
    |\kappa_{z,\alpha}(t-s)|^2|b(s)|^2
    \,\dd t\dd s \notag\\
  &\lesssim \langle z\rangle^{-1}
    \norm{a}_{L^2(\R)}^2\norm{b}_{L^2(\R)}^2.
  \label{eq:radial-time-HS-endpoint}
\end{align}
Bilinear complex interpolation between
\eqref{eq:radial-time-operator-endpoint} and
\eqref{eq:radial-time-HS-endpoint} yields \eqref{eq:radial-time-schatten}.  
\end{proof}

We also record the following vector-valued multi-product trace formula. We recall the vector-valued function space $L^p(X;H)$ for some measure space $X$ and  Banach space $H$, consisting of strongly measurable functions $f:X\rightarrow H$ with
\[\|f\|_{L^p(X;H)}^p=\int_X \|f(x)\|_H^p\,\dd x<\infty.\]

\begin{lemma}
\label{lem:operator-valued-cyclic-trace}
Let $(X,\mu)$ be a $\sigma$-finite measure space such that $L^2(X)$ is separable, 
let $H$ be a separable Hilbert space, and let $\ell\ge2$ be an
integer.  Suppose that  
\[
  D_j\in L^2\bigl(X\times X;\Sch^2(H)\bigr),  \qquad j=1,\ldots,\ell.
\]
Let $B_j$ be the Hilbert--Schmidt integral operator on $L^2(X;H)$ with
kernel $D_j$, namely
\begin{equation}\label{eq:Bj}
  (B_jf)(x)=\int_XD_j(x,y)f(y)\,\dd\mu(y).
\end{equation}
Then the composition operator
$B=\prod_{j=1}^\ell B_j$ 
belongs to
$\Sch^1(L^2(X;H))$ with trace
\begin{equation}\label{eq:operator-valued-cyclic-trace}
  \Tr_{L^2(X;H)}(B)
  =\int_{X^\ell}
  \Tr_H\left(
    \prod_{j=1}^{\ell}D_j(x_j,x_{j+1})
  \right)
  \prod_{j=1}^\ell\dd\mu(x_j),
\end{equation}
where we set $x_{\ell+1}=x_1$.
The integral on the r.h.s. is absolutely convergent.
\end{lemma}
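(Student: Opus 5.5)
The plan is to reduce the vector-valued statement to the scalar trace formula for products of Hilbert--Schmidt integral operators by working in a tensor product, and then to identify the two sides. Since $L^2(X)$ and $H$ are separable, we have the unitary identification $L^2(X;H)\cong L^2(X)\otimes H$. Fix an orthonormal basis $(e_m)_m$ of $H$. Each kernel $D_j$ then has matrix entries $d^j_{mn}(x,y):=\langle D_j(x,y)e_n,e_m\rangle_H$, and
\[
\sum_{m,n}\norm{d^j_{mn}}^2_{L^2(X\times X)}=\int_{X\times X}\norm{D_j(x,y)}_{\Sch^2(H)}^2\,\dd\mu(x)\dd\mu(y)<\infty.
\]
This computation shows that $B_j$ is a Hilbert--Schmidt operator on $L^2(X;H)$ with $\norm{B_j}_{\Sch^2}=\norm{D_j}_{L^2(X\times X;\Sch^2(H))}$. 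The Hilbert--Schmidt norm can be evaluated on the orthonormal basis $\phi_k\otimes e_n$, where $(\phi_k)_k$ is an orthonormal basis of $L^2(X)$, together with Parseval in both variables. The formula \eqref{eq:Bj} defines a bounded operator, because by Cauchy--Schwarz in the Bochner sense $\norm{(B_jf)(x)}_H\le\int\norm{D_j(x,y)}_{\Sch^2}\norm{f(y)}_H\,\dd\mu(y)$.

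Since $\ell\ge2$, Schatten--H\"older gives $B=\prod_j B_j\in\Sch^1$, with $\norm{B}_{\Sch^1}\le\norm{B_1}_{\Sch^2}\norm{B_2}_{\Sch^2}\prod_{j\ge3}\norm{B_j}_{\Sch^\infty}$. For the trace, I first treat $\ell=2$. In that case $\Tr(B_1B_2)=\langle B_2,B_1^*\rangle_{\Sch^2}$, the Hilbert--Schmidt inner product. Via the unitary map $\Sch^2(L^2(X;H))\cong L^2(X\times X;\Sch^2(H))$ established above, this inner product equals
\[
\int_{X^2}\Tr_H\bigl(D_1(x_1,x_2)D_2(x_2,x_1)\bigr)\,\dd\mu(x_1)\dd\mu(x_2),
\]
because the kernel of $B_1^*$ is $D_1(y,x)^*$. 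For $\ell\ge3$, I group $B=B_1\cdot(B_2\cdots B_\ell)$. Composing a Hilbert--Schmidt integral operator with further ones gives an integral operator whose kernel is
\[
D_{2\cdots\ell}(x_2,x_1)=\int_{X^{\ell-2}}D_2(x_2,x_3)\cdots D_\ell(x_\ell,x_1)\,\dd\mu(x_3)\cdots\dd\mu(x_\ell),
\]
where the integral is absolutely convergent as a Bochner integral in $\Sch^2(H)$. I would prove this by induction, using $\norm{ST}_{\Sch^2}\le\norm{S}_{\Sch^2}\norm{T}_{\Sch^2}$ and Cauchy--Schwarz in the intermediate variable; this shows the kernel lies in $L^2(X\times X;\Sch^2(H))$. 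Applying the $\ell=2$ case and Fubini then gives \eqref{eq:operator-valued-cyclic-trace}.

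The main obstacle is the absolute convergence of the full $\ell$-fold integral, which is what justifies Fubini. I would bound
\[
\bigl|\Tr_H\prod_j D_j(x_j,x_{j+1})\bigr|\le\prod_j\norm{D_j(x_j,x_{j+1})}_{\Sch^2},
\]
which holds since $\ell\ge2$ and $\norm{\cdot}_{\Sch^\infty}\le\norm{\cdot}_{\Sch^2}$. Setting $k_j(x,y)=\norm{D_j(x,y)}_{\Sch^2(H)}$, each $k_j$ is a scalar kernel in $L^2(X\times X)$. The resulting cyclic integral $\int\prod_jk_j(x_j,x_{j+1})$ is exactly the trace of the product of the nonnegative scalar Hilbert--Schmidt operators with kernels $k_j$. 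By iterated Cauchy--Schwarz, passing through the scalar version of the same argument, it is bounded by $\prod_j\norm{k_j}_{L^2(X\times X)}<\infty$. Strong measurability of the integrands follows from the separability of $H$ (Pettis).
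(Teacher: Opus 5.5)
Your proof is correct, but it reaches the trace identity by a different route than the paper.

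\textbf{The paper's route.} It never computes the kernel of a product. It first shows that both sides of \eqref{eq:operator-valued-cyclic-trace} are bounded $\ell$-linear functionals of $(D_1,\dots,D_\ell)\in L^2(X\times X;\Sch^2(H))^\ell$:
\begin{itemize}
\item the right side via the pointwise Schatten--H\"older bound plus Finner's inequality;
\item the left side via the cited Hilbert--Schmidt kernel isometry plus Schatten--H\"older.
\end{itemize}
It then verifies the identity on the dense class of elementary kernels $u(x)v(y)A$ with $A$ of finite rank, where both sides factor explicitly.

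\textbf{Your route.} You obtain the case $\ell=2$ as the polarized form of the Hilbert--Schmidt isometry, which you prove yourself via matrix entries rather than citing. You reduce $\ell\ge3$ to this case in three moves:
\begin{itemize}
\item identify $B_2\cdots B_\ell$ as an integral operator with an a.e.\ absolutely convergent $\Sch^2(H)$-valued kernel;
\item move the bounded functional $T\mapsto\Tr_H(D_1(x_1,x_2)T)$ inside that Bochner integral;
\item apply Fubini.
\end{itemize}
Your iterated Cauchy--Schwarz bound is exactly the cyclic special case of Finner's inequality.

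\textbf{Comparison.} The paper's density argument is shorter. It avoids all measurability and Fubini bookkeeping for iterated operator-valued integrals, at the price of quoting the density of elementary kernels. Your argument is more self-contained and needs no density or continuity step. It also yields the explicit kernel of every partial product as a by-product. In exchange, you must actually carry out the a.e.\ convergence, strong measurability, and trace--integral interchange checks that you only sketch; none of these presents a real obstacle.
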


\begin{proof}
The Schatten H\"older inequality gives
\begin{align*}
  \left|\Tr_H\left(
    \prod_{j=1}^{\ell}D_j(x_j,x_{j+1})
  \right)\right|
  &\le \norm{D_1(x_1,x_2)D_2(x_2,x_3)}_{\Sch^1(H)} \prod_{j=3}^\ell \norm{D_j(x_j,x_{j+1})}_{\Sch^\infty(H)} \nonumber\\
  &\le\prod_{j=1}^\ell \norm{D_j(x_j,x_{j+1})}_{\Sch^2(H)}.
\end{align*}
Hence by Finner's generalized H\"older inequality
\cite[Theorem~2.1]{Finner1992} 
\begin{equation}\label{eq:operator-valued-cyclic-integral-bound}
  \int_{X^\ell}
  \left|\Tr_H\left(
    \prod_{j=1}^{\ell}D_j(x_j,x_{j+1})
  \right)\right|
  \prod_{j=1}^{\ell}\dd\mu(x_j)
  \le
  \prod_{j=1}^{\ell}
  \norm{D_j}_{L^2(X\times X;\Sch^2(H))}.
\end{equation}
Thus the cyclic integral is absolutely convergent.
On the other hand,
by the vector-valued Hilbert--Schmidt kernel identification
\cite[Lemma~B.3]{HundertmarkKunstmannRiedVugalter2023},
\[
  \norm{B_j}_{\Sch^2(L^2(X;H))}
  =\norm{D_j}_{L^2(X\times X;\Sch^2(H))},
  \qquad 
  j=1,\ldots,\ell.
\]
Therefore $B\in\Sch^1(L^2(X;H))$ as 
\begin{equation}\label{eq:operator-valued-cyclic-trace-bound}
  \left|\Tr_{L^2(X;H)}(B
  )\right|
  \le \norm{B_1B_2}_{\Sch^1(L^2(X;H))}
       \prod_{j=3}^{\ell}\norm{B_j}_{\Sch^\infty(L^2(X;H))} 
  \le \prod_{j=1}^{\ell}
       \norm{D_j}_{L^2(X\times X;\Sch^2(H))}.
\end{equation}
From \eqref{eq:operator-valued-cyclic-integral-bound} and
\eqref{eq:operator-valued-cyclic-trace-bound}, we see both sides of
\eqref{eq:operator-valued-cyclic-trace} define continuous $\ell$-linear
functionals on the product space $L^2(X\times X;\Sch^2(H))^\ell$.

The proof of \cite[Lemma~B.3]{HundertmarkKunstmannRiedVugalter2023} also shows that the subclass of finite linear
combinations of kernels like
\[
  D(x,y)=u(x)v(y)A,
  \qquad u,v\in L^2(X),  \quad A\in \mathfrak S^2(H)~ \text{finite rank}
\]
are dense in
$L^2(X\times X;\Sch^2(H))$.  
It therefore suffices to prove for 
\[
  D_j(x,y)=u_j(x)v_j(y)A_j,
  \qquad j=1,\ldots,\ell,
\]
with each $A_j\in \mathfrak S^2(H)$ finite rank and $u_j, v_j\in L^2(X)$.  
By the definition of $B_j$, 
\[
  (B f)(x)
  =
  u_1(x)
  \prod_{j=1}^\ell A_j
    \prod_{j=1}^{\ell-1}
    \int_X v_j(y)u_{j+1}(y)\,\dd\mu(y)
  \int_X v_\ell(y)f(y)\,\dd\mu(y).
\]
 Since $A_j$ has finite rank,
a direct trace computation with convention $u_{\ell+1}=u_1$ gives
\[
  \Tr_{L^2(X;H)}(B)
  =
  \Tr_H\left(\prod_{j=1}^\ell A_j\right)
  \prod_{j=1}^{\ell}
  \int_X v_j(x)u_{j+1}(x)\,\dd\mu(x).
\]
On the other hand, since
$v_ju_{j+1}\in L^1(X)$ by Cauchy--Schwarz, Fubini's theorem gives
\begin{align*}
  \int_{X^\ell}
  \Tr_H\!\left(
    \prod_{j=1}^{\ell}D_j(x_j,x_{j+1})
  \right)
  \prod_{j=1}^{\ell}\dd\mu(x_j)
  =
  \Tr_H\left(\prod_{j=1}^\ell A_j\right)
  \prod_{j=1}^{\ell}
  \int_X v_j(x)u_{j+1}(x)\,\dd\mu(x).
\end{align*}
Thus \eqref{eq:operator-valued-cyclic-trace} holds for above
subclass of kernels, and hence for arbitrary $D_1,\ldots,D_\ell \in L^2(X\times X;\Sch^2(H))$ by density.
\end{proof}

Finally, we record the following weighted fractional integral estimate in a general multilinear cyclic form.

\begin{lemma}
\label{lem:radial-cyclic-weighted}
Let $\ell\ge3$ be an integer and let
$\ell/(\ell-1)<\tau \le \ell$.
Set
\[
  \theta=\theta(\ell,\tau):=1-\frac1\ell-\frac1\tau.
\]
Then, for measurable functions $(F_j)_{j=1}^\ell$ 
such that
$r^\theta F_j(r)\in L^\tau((0,\infty))$, 
\begin{equation}\label{eq:radial-cyclic-weighted-general}
  \left|\int_{(0,\infty)^\ell}
  \prod_{j=1}^{\ell}F_j(r_j)\,
  \prod_{j=1}^{\ell}|r_j-r_{j+1}|^{-1/\ell}
  \,\dd r_j\right|
  \lesssim
  \prod_{j=1}^{\ell}
  \norm{r^\theta F_j(r)}_{L^\tau((0,\infty))},
\end{equation}
with convention $r_{\ell+1}=r_1$.
\end{lemma}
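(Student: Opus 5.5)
The plan is to remove the power weights by a logarithmic change of variables. This turns the left-hand side of \eqref{eq:radial-cyclic-weighted-general} into a translation-invariant cyclic convolution form on $\R$, which is then controlled by a multilinear Young (Brascamp--Lieb) inequality. The admissible range $\ell/(\ell-1)<\tau\le\ell$ should appear exactly as the integrability range of the new one-variable kernel.

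First we may assume every $F_j\ge0$. We set $G_j(r):=r^\theta F_j(r)$, so that the right-hand side is $\prod_j\norm{G_j}_{L^\tau}$. We substitute $r_j=e^{u_j}$ and define
\[
  g_j(u):=e^{u/\tau}G_j(e^u),\qquad\text{so that}\qquad \norm{g_j}_{L^\tau(\R)}=\norm{G_j}_{L^\tau((0,\infty))}.
\]
Since $\dd r_j=r_j\dd u_j$, we get $F_j(r_j)\dd r_j=r_j^{1-\theta-1/\tau}g_j(u_j)\dd u_j=r_j^{1/\ell}g_j(u_j)\dd u_j$, using $\theta=1-1/\ell-1/\tau$. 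Each variable $r_j$ occurs in exactly two kernel factors, so we split $r_j^{1/\ell}=r_j^{1/(2\ell)}r_j^{1/(2\ell)}$ and attach one half to each neighbouring kernel. We then use
\[
  |r_j-r_{j+1}|^{-1/\ell}(r_jr_{j+1})^{1/(2\ell)}
  =\bigl|2\sinh\bigl(\tfrac{u_j-u_{j+1}}{2}\bigr)\bigr|^{-1/\ell}=:k(u_j-u_{j+1}).
\]
The left-hand side therefore becomes
\[
  \int_{\R^\ell}\prod_{j=1}^\ell g_j(u_j)\prod_{j=1}^\ell k(u_j-u_{j+1})\,\dd u,
  \qquad u_{\ell+1}=u_1 .
\]
The kernel $k$ behaves like $|v|^{-1/\ell}$ near $0$ and like $e^{-|v|/(2\ell)}$ at infinity. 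Hence $k\in L^m(\R)$ for every $1\le m<\ell$.

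Next we bound this cyclic form by the multilinear Young inequality on the cycle graph. The $g_j$ are placed in $L^\tau$ and the $\ell$ copies of $k$ in $L^m$. Counting dimensions ($\ell$ one-dimensional integration variables), the scaling condition is $\ell/\tau+\ell/m=\ell$, which forces $m=\tau'$. The requirement $k\in L^{\tau'}$ is exactly $\tau'<\ell$, that is $\tau>\ell/(\ell-1)$. The requirement $m=\tau'\ge1$ holds automatically, and the upper bound $\tau\le\ell$ keeps $\tau'\ge\ell/(\ell-1)$, so every exponent lies in $[1,\infty]$.

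Finiteness of this Brascamp--Lieb datum then follows from the Bennett--Carbery--Christ--Tao criterion. We must check the subspace condition
\[
  \dim W\le\sum\nolimits_j\tfrac1\tau\dim\pi_j W+\sum\nolimits_j\tfrac1{\tau'}\dim(\pi_j-\pi_{j+1})W
\]
for subspaces $W\subset\R^\ell$. It is automatic when $W$ is the diagonal: there the evaluation maps contribute $\ell/\tau\ge1$, since $\tau\le\ell$. For other $W$, the difference maps contribute at least $\dim W/\tau'$ and the coordinate maps at least $\dim W/\tau$.

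As an independent check at the endpoint $\tau=\ell$, we would write the form as $\Tr\prod_j(M_{g_j}C_k)$ on $L^2(\R)$, with $C_k$ convolution by $k$. Schatten--H\"older together with the Kato--Seiler--Simon bound $\norm{M_gC_k}_{\Sch^\ell}\lesssim\norm{g}_{L^\ell}\norm{\widehat k}_{L^\ell}$ then applies. Here $\widehat k\in L^\ell$ because $|\widehat k(\sigma)|\lesssim\langle\sigma\rangle^{-(1-1/\ell)}$, which is integrable to the power $\ell$ exactly when $\ell>2$, i.e.\ $\ell\ge3$. The general $\tau$ then follows by multilinear interpolation in $1/\tau$ if preferred.

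The main obstacle is the multilinear step: justifying the cyclic Young inequality cleanly, that is, verifying the Brascamp--Lieb subspace condition for all $W$. If that becomes messy, the fallback is the trace/Kato--Seiler--Simon route at $\tau=\ell$, combined with an iterated Young/H\"older argument near $\tau\downarrow\ell/(\ell-1)$: integrate out $u_1$ using $k*k\in L^{s}$, reducing the cycle length by one. We would then close by induction on $\ell$, using the $L^2$-based trace bound for the base case $\ell=3$.
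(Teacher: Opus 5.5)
Your argument is correct, and it takes a genuinely different route from the paper.

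\textbf{The paper's route.} The paper does not change variables. It quotes the $L^\infty$ bound of Shi--Wu--Yan for the multilinear fractional integral $T(G_1,\dots,G_\ell)(x)$, which carries the extra factors $\prod_j|r_j-x|^{-\theta}$. It applies this bound to $G_{j,r_0}=r^\theta F_j\mathbf 1_{[r_0,\infty)}$ at points $x\in[0,r_0/2]$, where $|r_j-x|^{-\theta}\sim r_j^{-\theta}$. It then lets $r_0\to0$ by monotone convergence.

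\textbf{Your route.} You exploit the dilation invariance of the form directly. The substitution $r=e^u$ absorbs the weight exactly, since $1-\theta-1/\tau=1/\ell$. This yields a translation-invariant cyclic form with kernel $k(v)=|2\sinh(v/2)|^{-1/\ell}$, and the Bennett--Carbery--Christ--Tao criterion finishes the proof.

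\textbf{Comparison.} Your route makes transparent where both ends of the range come from: $k\in L^{\tau'}$ if and only if $\tau>\ell/(\ell-1)$, and the diagonal subspace condition holds if and only if $\tau\le\ell$. It also avoids both the cited multilinear fractional integral theorem and the truncation limit. The paper's route is shorter, given the citation.

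\textbf{Subspace condition.} This is the one place your sketch is thin. Let $D$ be the diagonal and suppose $D\subsetneq W$. The restrictions to $W$ of the difference maps $\pi_j-\pi_{j+1}$ then span only a $(\dim W-1)$-dimensional space. So the rank alone does not give $\dim W$ nonvanishing difference maps. You need the cyclic relation $\sum_j(\pi_j-\pi_{j+1})=0$. Because $W\neq D$, at least one restriction is nonzero, so this relation is a nontrivial dependency among the nonzero restrictions. That forces at least $\dim W$ of them to be nonzero. The coordinate maps are all nonzero on such $W$ and contribute $\ell/\tau\ge\dim W/\tau$. When $W\cap D=0$, your one-line count is already fine. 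With this line added, the Brascamp--Lieb datum is finite and your proof is complete.

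\textbf{Fallback remarks.} These are not needed, and they would not work as stated. Interpolation in $1/\tau$ from the single endpoint $\tau=\ell$ needs a second endpoint, and the estimate fails at $\tau=\ell/(\ell-1)$. Integrating out $u_1$ produces $\int g_1(u_1)k(u_\ell-u_1)k(u_1-u_2)\,du_1$, which is not $k*k$. The Kato--Seiler--Simon check at $\tau=\ell$ is fine as a consistency check.
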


\begin{proof}
It suffices to prove the corresponding estimate with $|F_j|$ in place of
$F_j$, hence we
may assume that $F_j\ge0$.
For arbitrary nonnegative $G_1,\ldots,G_\ell\in L^\tau(\R)$ and $x\in\R$, define
\[
\begin{aligned}
  T(G_1,\ldots,G_\ell)(x)
  :=\int_{\R^\ell}
  \prod_{j=1}^{\ell}G_j(r_j)
  \prod_{j=1}^{\ell}|r_j-r_{j+1}|^{-1/\ell}
  \prod_{j=1}^{\ell}|r_j-x|^{-\theta}
  \,\dd r_j.
\end{aligned}
\]
We first apply the $L^\infty$ estimate in
\cite[Theorem~1.2]{ShiWuYan2019} to
$G_j\in C_c^\infty(\R)$ and then extend to general $G_j\in L^\tau(\R)$, to obtain
\begin{equation}\label{eq:radial-cyclic-correlation-bound}
  \norm{T(G_1,\ldots,G_\ell)}_{L^\infty(\R)}
  \lesssim_{\ell,\tau}
  \prod_{j=1}^{\ell}\norm{G_j}_{L^\tau(\R)}.
\end{equation}
For $r_0>0$, set
\[
  G_{j,r_0}(r):=r^\theta F_j(r)\mathbf 1_{[r_0,\infty)}(r),
  \qquad j=1,\ldots,\ell ,
\]
which can be extended by zero to $\R$.  For $x\in[0,r_0/2]$ and $r_j\ge r_0$, $|r_j-x|^{-\theta}\sim r_j^{-\theta}$, which implies
\[
\begin{aligned}
 T(G_{1,r_0},\ldots,G_{\ell,r_0})(x)   \sim \int_{[r_0,\infty)^\ell}
  \prod_{j=1}^{\ell}F_j(r_j)
  \prod_{j=1}^{\ell}|r_j-r_{j+1}|^{-1/\ell}
  \,\dd r_j.
\end{aligned}
\]
Combining this with \eqref{eq:radial-cyclic-correlation-bound}, we have
\[
  \int_{[r_0,\infty)^\ell}
  \prod_{j=1}^{\ell}F_j(r_j)
  \prod_{j=1}^{\ell}|r_j-r_{j+1}|^{-1/\ell}
  \,\dd r_j
  \lesssim
  \prod_{j=1}^{\ell}
  \norm{G_{j,r_0}}_{L^\tau(\R)}
  =
  \prod_{j=1}^{\ell}
  \norm{r^\theta F_j(r)}_{L^\tau([r_0,\infty))},
\]
with the implicit constant independent of $r_0$.  Letting $r_0\rightarrow0$ and using the
monotone convergence theorem yields \eqref{eq:radial-cyclic-weighted-general}.
\end{proof}

\subsection
{Proof of Theorem~\ref{thm:radial-diagonal-schatten}}
\begin{proof}
From the discussion at the beginning of
this section, 
we assume that $V(t,x)$ is radial in
the spatial variable.
By decomposing the real and imaginary parts of $V$ into their positive and negative parts, it suffices to prove
\eqref{eq:radial-diagonal-schatten} for $V\ge0$.
By density, we may further assume that $V$ is bounded and compactly supported.
If $u=u(t,x)$ is spatially radial, we write $u(t,r)$ for its radial profile,
with $r=|x|$. 
We first work at unit frequency $N=1$ and
restore the general frequency scale at the end.

We recall the Bessel functions.  
Let $J_\nu$ and $Y_\nu$ denote the Bessel functions of the first and
second kinds of order $\nu$, respectively; equivalently, they are the
standard linearly independent solutions of the Bessel equation
\[
z^2u''(z)+zu'(z)+(z^2-\nu^2)u(z)=0.
\]  
Recall also that, for $z>0$,
\begin{equation}\label{eq:radial-Bessel-J-definition}
  J_\nu(z)
  :=\left(\frac z2\right)^\nu
    \sum_{k=0}^\infty
    \frac{(-1)^k(z^2/4)^k}{k!\,\Gamma(\nu+k+1)}.
\end{equation}
The Hankel functions of the first and second kinds are then
\begin{equation*}
  H_\nu^{(1)}(z):=J_\nu(z)+iY_\nu(z),
  \qquad
  H_\nu^{(2)}(z):=J_\nu(z)-iY_\nu(z).
\end{equation*}
In particular,
\begin{equation}\label{eq:radial-J-Hankel-relation}
  J_\nu(z)=\frac12\bigl(H_\nu^{(1)}(z)+H_\nu^{(2)}(z)\bigr).
\end{equation}
For further details on Bessel and Hankel functions,
see \cite{WatsonBessel}.  Throughout the proof, set
\[
  \nu:=\frac{d-2}{2},
  \qquad
  \omega_{d-1}:=|\mathbb S^{d-1}|.
\]

\emph{Step 1.  Radial extension operator with unit frequency.}
For $f\in L^2_{\rad}(\R^d)$, its Fourier transform $\widehat f$ is also radial.
We write $\widehat f_{\rad}:(0,\infty)\to\C$ for the radial profile of
$\widehat f$, that is,
\[
  \widehat f(\xi)=\widehat f_{\rad}(|\xi|)
  \qquad\text{for a.e. }\xi\in\R^d.
\]
The radial Hankel formula 
(see, for example, \cite[\S~14.4]{WatsonBessel}) 
gives
\begin{align}
  f(r)
  =r^{-\nu}\int_0^\infty
    \widehat f_{\rad}(\rho)J_\nu(r\rho)\rho^{\nu+1}\,\dd\rho.
  \label{eq:radial-Hankel-inverse}
\end{align}
For $h\in L^2((0,\infty))$, define the radial extension operator
\begin{equation*}
  \mathcal Eh(t,r)
  :=\int_0^\infty
    e^{-it\rho^2}(r\rho)^{1/2}J_\nu(r\rho)
    \chi(\rho)h(\rho)\,\dd\rho.
\end{equation*}
For $f\in L^2_{\rad}(\R^d)$, set
\[
  h_f(\rho)
  :=\omega_{d-1}^{1/2}\rho^{(d-1)/2}
    \widehat f_{\rad}(\rho).
\]
By the radial Plancherel theorem, the correspondence
$f\longmapsto h_f$
is a unitary map from $L^2_{\rad}(\R^d)$ onto
$L^2((0,\infty))$.  Using
\eqref{eq:radial-Hankel-inverse}, \eqref{eq:EN} and
$\widehat{P_1f}_{\rad}(\rho)=\chi(\rho)\widehat f_{\rad}(\rho)$,
we obtain
\begin{equation}\label{eq:radial-E-E1-relation}
  \mathcal Eh_f(t,r)
  =\omega_{d-1}^{1/2}r^{(d-1)/2}E_1f(t,r).
\end{equation}
Hence, for arbitrary $f,g\in L^2_{\rad}(\R^d)$,
\eqref{eq:radial-E-E1-relation} gives
\begin{align*} 
  \ip{\mathcal Eh_f}{M_V\mathcal Eh_g}_{L_t^2(\R;L_r^2(0,\infty))} 
  &=\omega_{d-1}\int_\R\int_0^\infty 
    V(t,r)E_1f(t,r)\overline{E_1g(t,r)} 
    r^{d-1}\,\dd r\dd t\\ 
  &=\ip{E_1f}{M_VE_1g}_{L_t^2(\R;L_x^2(\R^d))}. 
\end{align*}
Thus $\mathcal E^*M_V\mathcal E$ and
$A_{V,1}^{\rad}=E_1^*M_VE_1$ agree under the unitary correspondence
$f\mapsto h_f$, i.e., 
$\mathcal E^*M_V\mathcal E$ is the unitary conjugate of
$A_{V,1}^{\rad}$, and in particular
\[
  \norm{\mathcal E^*M_V\mathcal E}_{\Sch^{\beta_*'}}
  =\norm{A_{V,1}^{\rad}}_{\Sch^{\beta_*'}}.
\]
Therefore, for \eqref{eq:radial-diagonal-schatten} with $N=1$, it suffices to prove
\begin{equation}\label{eq:radialEScha}
  \norm{\mathcal E^*M_V\mathcal E}_{\Sch^{\beta_*'}}
  \lesssim
  \left(
    \int_{\R}\int_0^\infty
    |V(t,r)|^{q_*'} r^{d-1}\,\dd r\dd t
  \right)^{1/q_*'}\sim \|V(t,x)\|_{L^{q_*'}_{t,x}(\R\times \R^d)}.
\end{equation}

\emph{Step 2. Decomposition of radial extension operator $\mathcal E$ by Bessel functions $J_\nu$.} 
Set
\[
 \mathfrak a_{\pm}
  :=(2\pi)^{-1/2}
    e^{\mp i(\frac{\pi\nu}{2}+\frac{\pi}{4})}.
\]
For $z>0$, define the remainders
\begin{align*}
  \mathfrak b_{+}(z)
  &:=\frac12z^{1/2}e^{-iz}H_\nu^{(1)}(z)-\mathfrak a_{+},\\
  \mathfrak b_{-}(z)
  &:=\frac12z^{1/2}e^{iz}H_\nu^{(2)}(z)-\mathfrak a_{-}.
\end{align*}
By \eqref{eq:radial-J-Hankel-relation}, these definitions give the
exact identity
\begin{equation*}
  z^{1/2}J_\nu(z)
  =\sum_{\sigma\in\{+,-\}}
    e^{i\sigma z}\bigl(\mathfrak a_\sigma+\mathfrak b_\sigma(z)\bigr),
  \qquad z>0.
\end{equation*}
The decay estimates of  $\mathfrak b_\pm$  (see e.g., \cite[Proof of Lemma~4.2]{KillipVisanZhang2008}) says that, for every integer
$\ell\ge0$,
\begin{equation}\label{eq:radial-Bessel-symbol}
  |\partial_z^\ell\mathfrak b_{\pm}(z)|
  \lesssim_{\nu,\ell}z^{-1-\ell},
  \qquad z\ge1.
\end{equation}
Choose nonnegative functions
$\eta_0,\eta_\infty\in C^\infty([0,\infty))$ such that
\[
  \eta_0(r)+\eta_\infty(r)=1,
  \qquad
  \eta_0(r)=1\quad(0\le r\le4),
  \qquad
  \supp\eta_0\subset[0,8].
\]
Thus $\eta_\infty(r)=0$ for $r\le4$.  We decompose
\begin{equation}\label{eq:radial-E-decomposition}
  \mathcal E=\mathcal E_<+ \mathcal E_++\mathcal E_-,
\end{equation}
where
\begin{align}
  \mathcal E_<h(t,r)
  &:=\eta_0(r)\int_0^\infty
    e^{-it\rho^2}(r\rho)^{1/2}J_\nu(r\rho)
    \chi(\rho)h(\rho)\,\dd\rho,
  \label{eq:E<}\\
  \mathcal E_\pm h(t,r)
  &:=\eta_\infty(r)
    \int_0^\infty
    e^{\pm ir\rho-it\rho^2}
    \bigl(\mathfrak a_\pm+\mathfrak b_\pm(r\rho)\bigr)\chi(\rho)h(\rho)\,\dd\rho.
  \label{eq:radial-large-Epm}
\end{align}

\emph{Step 3. The exterior terms $\mathcal E_\pm$.}
Fix a compact interval $I\subset(0,\infty)$ whose interior contains
$\supp\chi$.  
For $\sigma\in\{+,-\}$, set
\[
  X_\sigma:=M_{V^{1/2}}\mathcal E_\sigma.
\]
For $r,y>4$, define 
\[
  h_{r,y}^\sigma(\rho)
  := \bigl(\mathfrak a_\sigma+\mathfrak b_\sigma(r\rho)\bigr)\overline{\bigl(\mathfrak a_\sigma+\mathfrak b_\sigma(y\rho)\bigr)}|\chi(\rho)|^2,
  \qquad \rho\in I.
\]
By \eqref{eq:radial-Bessel-symbol} and $\supp\chi\subset (1/2,2)$,
\begin{equation}\label{eq:radial-large-product-amplitude-uniform}
  \sup_{\sigma\in\{+,-\}}\sup_{r,y>4}
  \left(
    \norm{h_{r,y}^\sigma}_{L^\infty(I)}
    +\norm{\partial_\rho h_{r,y}^\sigma}_{L^\infty(I)}
  \right)<\infty.
\end{equation}
For $t\in\R$ and $f\in L^2(\R)$, set
\[
  \kappa_{r,y}^{\sigma}(t)
  :=\int_I
    e^{i\sigma(r-y)\rho-it\rho^2}
    h_{r,y}^\sigma(\rho)\,\dd\rho,
  \qquad \mathcal C_{r,y}^{\sigma}f= \kappa_{r,y}^{\sigma} *f,
\]
\[\text{and}\qquad
  D_{r,y}^{\sigma}
  :=\eta_\infty(r)\eta_\infty(y)
    M_{V(\cdot,r)^{1/2}}
    \mathcal C_{r,y}^{\sigma}
    M_{V(\cdot,y)^{1/2}}.
\]
For $f\in L^2((4,\infty)\times\R)$, a direct computation using \eqref{eq:radial-large-Epm} gives
\[
  (X_\sigma X_\sigma^*f)(t,r)
  =\int_4^\infty D_{r,y}^\sigma (f(\cdot,y))(t)\,\dd y.
\]
Thus $D_{r,y}^\sigma$, as a vector-valued function of $r,y$, is the  kernel of
$X_\sigma X_\sigma^*$ with respect to the radial variable, in the sense of Lemma~\ref{lem:operator-valued-cyclic-trace} \eqref{eq:Bj}.
Set, for $r>4$,
\[
  F(r):=\norm{V(t,r)}_{L_t^{q_*'}(\R)},
\]
and extend $F$ by zero to $0<r\le4$.  Since $\beta_*'=2q_*'$,
\[
  \norm{V(t,r)^{1/2}}_{L_t^{\beta_*'}(\R)}=F(r)^{1/2}.
\]
Hence, using Lemma~\ref{lem:radial-time-schatten} with $h_\alpha=h_{r,y}^\sigma, z=\sigma(r-y), a(t)=V(t,r)^{1/2}, b(t)=V(t,y)^{1/2}$, $\gamma=\beta_*'$
and \eqref{eq:radial-large-product-amplitude-uniform}, we obtain
\begin{equation}\label{eq:radial-large-kernel-bound}
  \norm{D_{r,y}^{\sigma}}_{\Sch^{\beta_*'}(L_t^2(\R))}
  \lesssim
  \langle r-y\rangle^{-1/\beta_*'}F(r)^{1/2}F(y)^{1/2}
  \lesssim
  |r-y|^{-1/\beta_*'}F(r)^{1/2}F(y)^{1/2},
  \quad r\ne y.
\end{equation}
The kernel of
$D_{r,y}^{\sigma}\in \Sch^2(L_t^2(\R))$ is
\[
  \eta_\infty(r)\eta_\infty(y)
  V(t,r)^{1/2}\kappa_{r,y}^{\sigma}(t-s)
  V(s,y)^{1/2},
\]
which is jointly measurable in $(r,y,t,s)$.  Hence the map
$(r,y)\mapsto D_{r,y}^{\sigma}$ is weakly measurable as an
$\Sch^2(L_t^2(\R))$-valued map.
Since $\Sch^2(L_t^2(\R))$ is separable, 
the Pettis measurability theorem (see \cite[Chapter II, Theorem~2]{DiestelUhl1977}) implies that this map is strongly measurable. 
Moreover, since $V$ is bounded and compactly supported and
$\sup_{r,y>4}\norm{\kappa_{r,y}^{\sigma}}_{L_t^\infty(\R)}<\infty$,
one has
\begin{align*}
  \norm{D_{r,y}^{\sigma}}_{\Sch^2(L_t^2(\R))}^2
  &=\eta_\infty(r)^2\eta_\infty(y)^2
    \iint_{\R^2}V(t,r)
      |\kappa_{r,y}^\sigma(t-s)|^2
      V(s,y)\,\dd t\dd s \\
  &\lesssim
  \norm{V(t,r)}_{L_t^1(\R)}
  \norm{V(t,y)}_{L_t^1(\R)}
  \in L^1_{r,y}((4,\infty)^2).
\end{align*}
Hence the operator-valued map
$(r,y)\longmapsto D_{r,y}^\sigma$
belongs to
$L^2_{r,y}\bigl((4,\infty)^2;\Sch^2(L_t^2(\R))\bigr).$
Applying Lemma~\ref{lem:operator-valued-cyclic-trace} with
$X=(4,\infty)$, $H=L_t^2(\R)$, $\ell=\beta_*'=2d+1$, and
$D_j(r,y)\equiv D_{r,y}^{\sigma}$, $\forall\, j=1,\ldots, \ell$, therefore gives
\[
  \Tr\bigl((X_\sigma X_\sigma^*)^{\beta_*'}\bigr)
  =\int_{(4,\infty)^{\beta_*'}}
    \Tr_{L_t^2(\R)}\bigl(
      D_{r_1,r_2}^{\sigma}D_{r_2,r_3}^{\sigma}\cdots
      D_{r_{\beta_*'},r_1}^{\sigma}
    \bigr)
    \prod_{j=1}^{\beta_*'}\dd r_j.
\]
By the Schatten H\"older inequality and
\eqref{eq:radial-large-kernel-bound},
for almost every point in $(4,\infty)^{\beta_*'}$,
\begin{align*}
  \Tr\bigl((X_\sigma X_\sigma^*)^{\beta_*'}\bigr)
  &\lesssim
  \int_{(4,\infty)^{\beta_*'}}
    \prod_{j=1}^{\beta_*'}F(r_j)
    \prod_{j=1}^{\beta_*'}|r_j-r_{j+1}|^{-1/\beta_*'}
    \prod_{j=1}^{\beta_*'}\dd r_j,
\end{align*}
with convention $r_{\beta_*'+1}:=r_1$.  
Since $d\ge2$,
one has
$\beta_*'/(\beta_*'-1)<q_*'\le\beta_*'$.  Applying
Lemma~\ref{lem:radial-cyclic-weighted} with
$\ell=\beta_*'$, $\tau=q_*'$, and $\theta =1-\frac1{\beta_*'}-\frac1{q_*'}=\frac{2(d-1)}{2d+1}$,  to the functions
$F_j\equiv F$
gives
\[
  \Tr\bigl((X_\sigma X_\sigma^*)^{\beta_*'}\bigr)
  \lesssim
  \norm{r^{\theta}F(r)}_{L^{q_*'}((0,\infty))}^{\beta_*'}
  =
  \left(
    \int_4^\infty r^{d-1}
    \int_\R|V(t,r)|^{q_*'}\,\dd t\dd r
  \right)^{\beta_*'/q_*'}.
\]
Consequently, uniformly in $\sigma\in\{+,-\}$,
\begin{equation}\label{eq:radial-large-sign-bound}
  \norm{X_\sigma}_{\Sch^{2\beta_*'}}^2
  = \Tr\bigl((X_\sigma X_\sigma^*)^{\beta_*'}\bigr)^{1/\beta_*'}
  \lesssim 
  \left(
    \int_\R\int_0^\infty
    |V(t,r)|^{q_*'}r^{d-1}\,\dd r\dd t
  \right)^{1/q_*'}.
\end{equation}

\emph{Step 4. The local term $\mathcal E_<$.}
Write \eqref{eq:E<} as
\begin{equation*}
  \mathcal E_<h(t,r)
  =\eta_0(r)r^{\nu+1/2}T_rh(t),
  \qquad 0\le r\le8,
\end{equation*}
where
\[
  T_rh(t)
  :=r^{-\nu} \int_0^\infty e^{-it\rho^2}
    \rho^{1/2}J_\nu(r\rho)
    \chi(\rho)h(\rho)\,\dd\rho.
\]
At $r=0$ the amplitude is understood by continuous extension.  Indeed,
the power-series expansion \eqref{eq:radial-Bessel-J-definition}, together
with $\supp\chi\subset(1/2,2)$, shows that
\[
  (r,\rho)\longmapsto
  r^{-\nu}\rho^{1/2}J_\nu(r\rho)\chi(\rho)
\]
extends smoothly to $[0,8]\times\supp\chi$ and is uniformly bounded there.
For $\mu>0$  set
\[
  (U_0h)(\mu)
  :=(2\sqrt\mu)^{-1/2}h(\sqrt\mu), \qquad \alpha_r(\mu)
  := \sqrt{\pi}\,r^{-\nu}
  J_\nu(r\sqrt{\mu})\chi(\sqrt{\mu}).
\]
where at $r=0$ the second expression is again understood by continuous extension.
Then $U_0$ is unitary on $L^2((0,\infty))$.
Let $\mathcal F_0:L^2((0,\infty))\to L^2(\R)$ denote the
restriction of the one-dimensional Fourier transform to the positive half-line:
\[
  (\mathcal F_0g)(t)
  :=(2\pi)^{-1/2}\int_0^\infty e^{-it\mu}g(\mu)\,\dd\mu.
\]
The change of variables $\mu=\rho^2$
gives the exact factorization
\[
  T_r=\mathcal F_0M_{\alpha_r}U_0.
\]
We claim that, uniformly for $r\in[0,8]$,
\begin{equation}\label{eq:radial-small-fixed-r}
  \norm{T_r^*M_WT_r}_{\Sch^{q_*'}}
  \lesssim \norm{W}_{L^{q_*'}(\R)},
  \qquad W\in L^{q_*'}(\R), \quad W\geq 0.
\end{equation}
For arbitrary $p\ge2$, the Kato--Seiler--Simon inequality (see
\cite[Theorem~4.1]{SimonTrace}), applied to the zero extension of $g\in L^2((0,\infty))$ to $\R$, gives
\begin{equation*}
  \norm{M_f\mathcal F_0M_g}_{\Sch^p}
  \lesssim_p \norm{f}_{L^p(\R)}
  \norm{g}_{L^p((0,\infty))}.
\end{equation*}
Because multiplication
by the unitary $U_0$ does not change singular values,
\begin{align*}
\norm{T_r^*M_WT_r}_{\Sch^{q_*'}}
=
  \norm{M_{W^{1/2}}T_r}_{\Sch^{2q_*'}}^2
  &=\norm{M_{W^{1/2}}\mathcal F_0
    M_{\alpha_r}U_0}^2_{\Sch^{2q_*'}}=\norm{M_{W^{1/2}}\mathcal F_0
    M_{\alpha_r}}^2_{\Sch^{2q_*'}}\\
  &\lesssim
    \norm{W}_{L^{q_*'}(\R)}\norm{\alpha_r}^2_{L^{2q_*'}((0,\infty))}
  \lesssim \norm{W}_{L^{q_*'}(\R)}.
\end{align*}
Here the last inequality follows since the functions
$\alpha_r$ are supported in a fixed compact subset of
$(0,\infty)$ and are uniformly bounded.
Since we have the identity
\begin{equation*}
  \mathcal E_<^*M_V\mathcal E_<
  =\int_0^8T_r^*M_{\eta_0(r)^2r^{d-1}V(\cdot,r)}T_r\,\dd r.
\end{equation*}
Combining this with
\eqref{eq:radial-small-fixed-r} and H\"older's inequality,
we obtain
\begin{align*}
  \norm{\mathcal E_<^*M_V\mathcal E_<}_{\Sch^{q_*'}}
  &\le
  \int_0^8
  \norm{
    T_r^*M_{\eta_0(r)^2r^{d-1}V(\cdot,r)}T_r
  }_{\Sch^{q_*'}}\,\dd r\\
  &\lesssim
  \int_0^8 \eta_0(r)^2r^{d-1}
    \norm{V(t,r)}_{L_t^{q_*'}(\R)}\,\dd r\\
  &\lesssim
  \left(
    \int_{\R}\int_0^\infty
    |V(t,r)|^{q_*'}r^{d-1}\,\dd r\dd t
  \right)^{1/q_*'}.
\end{align*}
Set $X_<:=M_{V^{1/2}}\mathcal E_<$.  Since $V\ge0$, $2q_*'=\beta_*'<2\beta_*'$ and $\Sch^{\beta_*'}\subset\Sch^{2\beta_*'}$, we conclude that
\begin{equation}\label{eq:radial-small-component-bound} \norm{X_<}_{\Sch^{2\beta_*'}}^2\le 
  \norm{X_<}_{\Sch^{2q_*'}}^2
  =\norm{\mathcal E_<^*M_V\mathcal E_<}_{\Sch^{q_*'}}
  \lesssim 
  \left(
    \int_{\R}\int_0^\infty
    |V(t,r)|^{q_*'} r^{d-1}\,\dd r\dd t
  \right)^{1/q_*'}.
\end{equation}

\emph{Step 5. Recombination and frequency rescaling.}
By \eqref{eq:radial-E-decomposition},
\[
  M_{V^{1/2}}\mathcal E=X_<+X_++X_-.
\]
The triangle inequality, together with
\eqref{eq:radial-large-sign-bound} and
\eqref{eq:radial-small-component-bound}, yields \eqref{eq:radialEScha} as
\[
  \norm{\mathcal E^*M_V\mathcal E}_{\Sch^{\beta_*'}}
  =\norm{M_{V^{1/2}}\mathcal E}_{\Sch^{2\beta_*'}}^2
  \lesssim
  \norm{X_<}_{\Sch^{2\beta_*'}}^2
  +\norm{X_+}_{\Sch^{2\beta_*'}}^2+\norm{X_-}_{\Sch^{2\beta_*'}}^2
  \lesssim
  \|V(t,x)\|_{L^{q_*'}_{t,x}(\R\times\R^d)}.
\]
The preceding argument proves the desired Schatten estimate \eqref{eq:radial-diagonal-schatten} at unit
frequency  $N=1$.  It remains to recover the estimate at an
arbitrary frequency scale.  This follows from the standard
Schr\"odinger scaling.  For $N>0$, recall the unitary Schr\"odinger scaling
\[
\delta_N: f\mapsto  f_N(x):=N^{d/2}f(Nx).
\]
Then the scaling intertwines with the truncation and Schr\"odinger semigroup by
\[
  P_N\delta_N=\delta_NP_1,
  \qquad
  U_S(t)\delta_N=\delta_N U_S(N^2t). 
\] 
Define the rescaled multiplier $V_N(t,x):=V(N^{-2}t,N^{-1}x)$. Then
\[
  \norm{V_N}_{L_{t,x}^{q_*'}(\R\times\R^d)}
  =N^{(d+2)/q_*'}\norm{V}_{L_{t,x}^{q_*'}(\R\times\R^d)}.
\]
For $f,g\in L^2_{\rad}(\R^d)$, a direct change of variables gives
\[
  \ip{\delta_Nf}{A_{V,N}^{\rad}\delta_Ng}
  =N^{-2}\ip{f}{A_{V_N,1}^{\rad}g},
  \qquad
  \delta_N^*A_{V,N}^{\rad}\delta_N=N^{-2}A_{V_N,1}^{\rad}.
\]
Then the unit-frequency estimate and unitary invariance of Schatten norms yield
\begin{align*}
  \norm{A_{V,N}^{\rad}}_{\Sch^{\beta_*'}}
  =N^{-2}\norm{A_{V_N,1}^{\rad}}_{\Sch^{\beta_*'}}\lesssim N^{-2+(d+2)/q_*'}\norm{V}_{L_{t,x}^{q_*'}(\R\times\R^d)}=N^{-\frac{2(d-1)}{2d+1}}\norm{V}_{L_{t,x}^{q_*'}(\R\times\R^d)}.
\end{align*}
This proves \eqref{eq:radial-diagonal-schatten} for general frequency $N>0$. The proof is complete.
\end{proof}

\subsection{Proof of Theorem~\ref{thm:intro-radial}}

\begin{proof}
The Schatten bound \eqref{eq:radial-diagonal-schatten} in Theorem~\ref{thm:radial-diagonal-schatten}, together with the duality principle Lemma~\ref{thm:intro-abstract} and Remark~\ref{rem:abstract-closed-subspace},  implies the Bessel-family Strichartz estimate 
\eqref{eq:radial-boundary-estimate} at the diagonal point $(p,q)=(q_*,q_*)$, with 
$\beta=\beta_*=\beta(q_*)$; see point $D$ in Figure~\ref{fig:radial-admissible}.
Along the radial boundary $(C,E]$,  that is $p,q$ satisfy \eqref{eq:radial-boundary-segment},
Theorem~\ref{thm:intro-known-radial-single} and
the triangle inequality give the $\ell^1$ bound
\begin{align}\label{eq:radial-proof-trivial-l1-input}
  \norm{\sum_j\lambda_j|U_S(t)P_Nf_j|^2}_{L_t^{p}(\R;L_x^{q}(\R^d))}
  &\le \sum_j|\lambda_j|
  \norm{U_S(t)P_Nf_j}_{L_t^{2p}(\R;L_x^{2q}(\R^d))}^2\nonumber\\
&  \lesssim N^{-(d-1)(1-1/q)}\norm{\lambda}_{\ell^1}.
\end{align}
Interpolating between \eqref{eq:radial-boundary-estimate} at the diagonal point $D: q=q_*$,  
and the $\ell^1$ bound \eqref{eq:radial-proof-trivial-l1-input} at the trivial point $E: q=1$ and at arbitrary point near the endpoint $C: q=q_c$, respectively, 
we obtain
\eqref{eq:radial-boundary-estimate} for $1\le q\le q_*$ with
$\beta=\beta(q)$, and 
 for $q_*< q< q_c$ with
$1\le \beta<2p/(p+1)$. The proof is complete.
\end{proof}

\subsection{Necessary conditions} 

\begin{proposition}
\label{prop:radial-necessary}
Let $d\ge2$, let $1\le p\le\infty$, $1\le q<\infty$, and $\beta\ge1$.
Suppose that \eqref{eq:radial-boundary-estimate} holds
for every radial Bessel family
$f=(f_j)_j\subset L^2_{\rad}(\R^d)$ and every sequence $\lambda=(\lambda_j)_j\in\ell^\beta$.
Then
\begin{equation}\label{eq:radial-shell-necessary-condition}
  \frac1\beta
  \ge
  \frac1p+\frac d q-(d-1).
\end{equation}
If in addition $p<\infty$, then
\begin{equation}\label{eq:radial-beta-p-necessary}
  \beta\le p.
\end{equation}
\end{proposition}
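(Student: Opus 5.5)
By scaling it suffices to take $N=1$; the necessary conditions come from two families of test data, mimicking Lemma~\ref{lem:BLN-local-necessary} but adapted to radial symmetry.

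\emph{The shell condition \eqref{eq:radial-shell-necessary-condition}.} We use radial orthonormal data concentrated in thin frequency shells. Fix the annulus $\{3/4<|\xi|<4/3\}$, on which $\psi\equiv1$. Partition the radial interval $[3/4,4/3]$ into $K$ subintervals $I_\nu$ of length $\sim1/K$ with centers $\rho_\nu$. Define $f_\nu$ by $\widehat f_\nu=c_\nu\mathbf 1_{\{|\xi|\in I_\nu\}}$, normalized in $L^2$; these are radial and orthonormal, so $C_B=1$. Each shell has measure $\sim K^{-1}$, so $c_\nu\sim K^{1/2}$. For $|t|\le cK$ the phase $t(|\xi|^2-\rho_\nu^2)$ varies by at most $\pi/6$ on the shell. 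Hence
\[
U_S(t)P_1f_\nu(x)=e^{-it\rho_\nu^2}c_\nu\int_{|\xi|\in I_\nu}e^{ix\cdot\xi}\bigl(1+O(\text{small})\bigr)\dd\xi .
\]
For $|x|\le c$ the factor $e^{ix\cdot\xi}$ has no oscillation either, so $|U_S(t)P_1f_\nu(x)|\gtrsim K^{1/2}K^{-1}=K^{-1/2}$. This only yields $\sum_\nu|U_SP_1f_\nu|^2\gtrsim1$ on $|t|\lesssim K$, $|x|\lesssim1$, which gives the weaker bound $K^{1/p}\lesssim K^{1/\beta}$.

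To reach the exponent $d/q-(d-1)$ we exploit the larger region. For $|x|\sim R\le cK$ and $|t|\le cK$, radial profiles of shell data behave like $|x|^{-(d-1)/2}$ times an oscillation in $r\rho$, and that oscillation is controlled by the radial width $1/K$. So $|U_SP_1f_\nu(t,x)|\gtrsim K^{-1/2}|x|^{-(d-1)/2}$ on a set of positive proportion in $\{|t|\le cK,\ 1\le|x|\le cK\}$, after cutting out the zeros of the Bessel factor. We check this with the asymptotics $z^{1/2}J_\nu(z)=\sum_\pm e^{\pm iz}(\mathfrak a_\pm+\mathfrak b_\pm(z))$ from Step~2 of the proof of Theorem~\ref{thm:radial-diagonal-schatten}, choosing $x$ with $|x|\rho_\nu$ modulo $\pi$ in a good window, uniformly across the shells since all $\rho_\nu$ lie in an interval of width $O(1)$. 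A cleaner alternative is to modulate, taking $\widehat f_\nu=c_\nu e^{-i R_0|\xi|}\mathbf 1$, which places a single outgoing wave near $|x|\sim R_0+2t\rho$ and avoids the zeros.

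The sum is then $\gtrsim |x|^{-(d-1)}$. Its $L^q_x$ norm on $1\le|x|\le K$ is $\gtrsim K^{d/q-(d-1)}$ when $q<d/(d-1)$, and $\gtrsim1$ otherwise. Integrating in $t$ over a length $K$ gives the lower bound $K^{1/p+d/q-(d-1)}$ (up to logarithms). The right-hand side is $K^{1/\beta}$, and letting $K\to\infty$ yields \eqref{eq:radial-shell-necessary-condition}. The main obstacle is making the pointwise lower bound rigorous on a set of the right measure. We will handle it via the outgoing-wave modulation combined with stationary phase in the radial variable, so that a single Hankel branch dominates.

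\emph{The condition \eqref{eq:radial-beta-p-necessary}.} We copy Step~2 of Lemma~\ref{lem:BLN-local-necessary} in radial form. Take $h$ radial with $\widehat h$ supported in the shell $|\xi|\in[a,a+\delta_0]\subset(3/4,4/3)$, and set $\widehat g_j(\xi)=e^{ijT|\xi|^2}\widehat h(\xi)$. Changing variables $s=|\xi|^2$ and choosing $\widehat h$ proportional to $(2\sqrt s)^{1/2}$ times an indicator in $s$ of length $2\pi/T$ makes $(g_j)$ orthonormal and radial. We then have $U_S(t)P_1g_j=U_S(t-jT)P_1h$. Continuity of $t\mapsto U_S(t)P_1h$ in $L^{2q}$ and disjointness of the windows $[jT-\delta,jT+\delta]$ give the lower bound $M^{1/p}$ against the upper bound $M^{1/\beta}$, so $\beta\le p$.
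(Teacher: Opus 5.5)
Your argument for $\beta\le p$ is the paper's Step~1, and your near-origin computation with the shell data gives a second proof of it. Step~1 uses time translates $e^{-ijT\Delta}f$ of one datum whose spectral density is flat in $\mu=|\xi|^2$. One small correction: for $\widehat h_{\rad}$ itself the flat quantity is $\tfrac{\omega_{d-1}}{2}|\widehat h_{\rad}(\sqrt\mu)|^2\mu^{\frac d2-1}$. So your weight $(2\sqrt s)^{1/2}$ is correct only for the profile $\rho^{(d-1)/2}\widehat h_{\rad}$.

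For the shell condition you take a different route from the paper. The paper uses smooth bumps of radial width $1/M$ and waits until $t\sim M$. In the variables $t=Ms$, $r=My$, the outgoing branch tends to $\widetilde\eta(y-2s\rho_j)$ and the incoming one lies in a Schwartz tail. It therefore only needs the $L^2$ bound $\int_{aM}^{bM}|u_j|^2r^{d-1}\,dr\ge c_1$ for each $j$; the interference of the two Hankel branches is absorbed by a reverse triangle inequality in $L^2_y$. H\"older on the single shell $\{aM\le|x|\le bM\}$ then turns total mass $\gtrsim M$ into $M^{d/q-(d-1)}$. You instead stay at $|t|,|x|\lesssim cK$, where the packets start, and aim for the pointwise bound $\sum_\nu|u_\nu|^2\gtrsim|x|^{-(d-1)}$ followed by explicit integration. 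For $q\ge d/(d-1)$ this only gives $K^{1/p}$, but then the shell condition follows from $\beta\le p$ or is vacuous, so that case is fine.

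The step that does not work as written is the uniformity claim. For $r=|x|\gg1$ you cannot place $r\rho_\nu$ in a good window mod $\pi$ for all $\nu$ at once, since $r\rho_\nu$ sweeps an interval of length $\sim r$. Positive-proportion sets chosen separately for each $\nu$ also do not give a pointwise bound for the sum.

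The bound is nonetheless true for $r_0\le r\le cK$. On each $I_\nu$ neither $e^{-it\rho^2}$ nor $\cos(r\rho-\phi_0)$ oscillates, with $\phi_0$ the Bessel phase shift. Hence $|u_\nu|\approx c'K^{-1/2}r^{-(d-1)/2}|\cos(r\rho_\nu-\phi_0)|$ up to relative errors $O(c+r^{-1})$. The sum $\sum_\nu\cos^2(r\rho_\nu-\phi_0)$ is a Riemann sum with phase step $\lesssim r/K\le c$, hence $\gtrsim K$ once $r\ge r_0$. Alternatively, use the per-$\nu$ sets only to get an $L^1$ bound on the dyadic shell $|x|\sim cK$ and then apply H\"older, as the paper does.

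Your modulation variant also works, but you need $R_0=AK$ with $A$ large. The packets have width $\sim K$, and one integration by parts suppresses the incoming branch only by a factor $\sim K/R_0$. This is really the paper's mechanism with the Schr\"odinger time shift replaced by a half-wave phase.

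The trade-off is this: the paper's $L^2$-mass route never needs pointwise control of the Bessel factor or its zeros. Yours yields a pointwise profile of the density, at the cost of handling the standing-wave zeros.
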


\begin{proof}
It suffices to test
\eqref{eq:radial-boundary-estimate} on radial orthonormal
systems and $N=1$.  Write $\omega_{d-1}:=|\mathbb S^{d-1}|$.

\emph{Step 1: The time-translation condition $\beta\le p$.}
Assume that $p<\infty$.  
For radial functions
$f,g\in L^2(\R^d)$, radial Plancherel gives
\begin{equation}
\label{eq:radial-plancherel-inner-product}
  \ip{f}{g}_{L^2(\R^d)}
  =\omega_{d-1}\int_0^\infty
  \widehat f_{\rad}(\rho)
  \overline{\widehat g_{\rad}(\rho)}
  \rho^{d-1}\,\dd\rho.
\end{equation}
Choose $T>0$ sufficiently large so that
\[1+\frac{2\pi}{T}\le \frac{16}{9}. 
\]
We prescribe a radial Fourier transform
$\widehat f\ge 0$ for $f\in L^2$ by requiring
\begin{equation}
\label{eq:flat-spectral-profile}
 \frac{\omega_{d-1}}{2}|\widehat f_{\rad}(\sqrt{\mu})|^2\mu^{\frac d2-1}
 =\frac{T}{2\pi}\mathbf 1_{[1,1+2\pi/T]}(\mu).
\end{equation}
By \eqref{eq:radial-plancherel-inner-product} and
\eqref{eq:flat-spectral-profile},
\[
 \norm{f}_{L^2(\R^d)}^2
 =\frac{T}{2\pi}\int_1^{1+2\pi/T}\dd\mu
 =1.
\]
Moreover, $\supp\widehat f\subset\{\xi:1\le|\xi|\le\sqrt{1+2\pi/T}\}$,
which is contained in $\{\xi:3/4\le|\xi|\le4/3\}$ by the choice of
$T$.  Hence $P_1f=f$.

For each $j\in\Z$, define $g_j=e^{-ijT\Delta}f$; then every $g_j$ is radial and
$P_1g_j=g_j$.
Using \eqref{eq:radial-plancherel-inner-product} and
\eqref{eq:flat-spectral-profile}, followed by the change of variables
$\mu=\rho^2$, we obtain\[
 \begin{aligned}
 \ip{g_j}{g_k}
 =\frac{\omega_{d-1}}{2}\int_0^\infty
   e^{i(j-k)T\mu}|\widehat f_{\rad}(\sqrt{\mu})|^2
   \mu^{\frac d2-1}\,\dd\mu
 =\frac{T}{2\pi}
   \int_1^{1+2\pi/T}e^{i(j-k)T\mu}\,\dd\mu
 =\delta_{jk}.
 \end{aligned}
\]
Thus $(g_j)_{j\in\Z}$ is a radial orthonormal system.  

Set $u(t,x)=U_S(t)f(x)$.  Since $\widehat f\in L^1(\R^d)$, $u$ is
continuous on $\R\times\R^d$.  Moreover,
\[
 u(0,0)=f(0)
 =(2\pi)^{-d/2}\int_{\R^d}\widehat f(\xi)\,\dd\xi>0.
\]
Thus there exist a number $\delta$ with $0<\delta<T/2$, a ball
$B\subset\R^d$, and a constant $c_0>0$ such that, 
\[
 |u(t,x)|\ge c_0,
 \qquad |t|\le\delta, x\in B.
\]
The group property gives, for every $j\in\Z$,
\[
 U_S(t)P_1g_j(x)
 =U_S(t)g_j(x)
 =e^{i(t-jT)\Delta}f(x)
 =u(t-jT,x).
\]
Therefore, for every positive integer $K$,
\begin{align*}
  \norm{
    \sum_{j=1}^K|U_S(t)P_1g_j|^2
  }_{L_t^p(\R;L_x^q(\R^d))}^p
  &\ge
  \sum_{j=1}^K\int_{jT-\delta}^{jT+\delta}
  \norm{u(t-jT,x)}_{L_x^{2q}(\R^d)}^{2p}\,\dd t\\
  &=K\int_{-\delta}^{\delta}
  \norm{u(t,x)}_{L_x^{2q}(\R^d)}^{2p}\,\dd t
  \gtrsim K.
\end{align*}
On the other hand, applying the estimate
\eqref{eq:radial-boundary-estimate} to the orthonormal system $(g_j)_{j=1}^K$ with $\lambda_j=1$,
we obtain
\[
  K^{1/p}\lesssim \norm{(1,\ldots,1)}_{\ell^\beta}= K^{1/\beta}.
\]
Letting $K\to\infty$ gives \eqref{eq:radial-beta-p-necessary}.

\emph{Step 2: The radial-shell condition \eqref{eq:radial-shell-necessary-condition}.}
Set $\nu:=(d-2)/2$.
Choose a nonnegative function $\eta\in C_c^\infty((-1/100,1/100))$
satisfying $\norm{\eta}_{L^2(\R)}=1$.  For every sufficiently large integer
$M$, set
\[
  \rho_j:=1+\frac{j}{10M},
  \qquad 1\le j\le M,
\]
and
\begin{equation*}
  h_j(\rho)
  =M^{1/2}\eta\bigl(M(\rho-\rho_j)\bigr),
  \qquad 1\le j\le M.
\end{equation*}
Since $\supp\eta\subset(-1/100,1/100)$, the supports of the functions
$h_j$ are pairwise disjoint and contained in $(3/4,4/3)$ for all
sufficiently large $M$, and $\norm{h_j}_{L^2((0,\infty))}=1$.

Define radial functions $g_j$ by
\[
  \widehat g_{j,\rad}(\rho)
  =\omega_{d-1}^{-1/2}\rho^{-\frac{d-1}{2}}h_j(\rho).
\]
The disjointness of the supports of the $h_j$ and \eqref{eq:radial-plancherel-inner-product} give
$\ip{g_j}{g_\ell}=\delta_{j\ell}$.  Thus $(g_j)_{j=1}^{M}$ is a radial orthonormal system.
Moreover, $\supp\widehat g_{j,\rad}\subset(3/4,4/3)$, so
$P_1g_j=g_j$.
Define
$$u_j(t,|x|)=U_S(t)P_1g_j(x)=U_S(t)g_j(x).$$
By the Hankel formula \eqref{eq:radial-Hankel-inverse},
\begin{equation}\label{eq:radial-shell-hankel-representation}
  u_j(t,r)
  =\omega_{d-1}^{-1/2}r^{-\nu}
  \int_0^\infty
  e^{-it\rho^2}J_\nu(r\rho)
  \rho^{1/2}h_j(\rho)\,\dd\rho.
\end{equation}
The large-argument expansion for the Bessel function, see \cite[\S~7.3]{WatsonBessel},
gives, uniformly for $\rho\in[3/4,4/3]$,
\begin{equation}\label{eq:radial-shell-bessel-asymptotic}
  J_\nu(r\rho)
  =\left(\frac{2}{\pi r\rho}\right)^{1/2}
  \cos\left(r\rho-\frac{\pi\nu}{2}-\frac\pi4\right)
  +O((r\rho)^{-3/2}),
  \qquad r\to\infty.
\end{equation}
Set
$t=Ms$, $r=My$.
On the support of $h_j$, make the change of variables
\[
  \rho=\rho_j+\frac{\tilde{\rho}}{M}.
\]
The two exponentials arising from the cosine in \eqref{eq:radial-shell-bessel-asymptotic} give two components.
More precisely, define
\begin{align*}
  G_{j,M}^+(s,y)
  &:=\int_\R
  e^{i[(y-2s\rho_j)\tilde{\rho}-s\tilde{\rho}^2/M]}
  \eta(\tilde{\rho})\,\dd\tilde{\rho},\\
  G_{j,M}^-(s,y)
  &:=\int_\R
  e^{-i[(y+2s\rho_j)\tilde{\rho}+s\tilde{\rho}^2/M]}
  \eta(\tilde{\rho})\,\dd\tilde{\rho}.
\end{align*}
Then
\eqref{eq:radial-shell-hankel-representation} and
\eqref{eq:radial-shell-bessel-asymptotic} yield
\begin{align}\label{eq:radial-shell-rescaled-solution}
  u_j(Ms,My)
  ={}&(2\pi\omega_{d-1})^{-1/2}M^{-d/2}y^{-\frac{d-1}{2}}
  \Bigl[
    e^{i(My\rho_j-Ms\rho_j^2-\frac{\pi\nu}{2}-\frac\pi4)}
    G_{j,M}^+(s,y) \\
    & +e^{-i(My\rho_j+Ms\rho_j^2-\frac{\pi\nu}{2}-\frac\pi4)}
    G_{j,M}^-(s,y)
  \Bigr]\notag
  \quad+\mathcal R_{j,M}(s,y),
\end{align}
where $\mathcal R_{j,M}$ denotes the contribution of the error term in
\eqref{eq:radial-shell-bessel-asymptotic}.

Let
\[
  \widetilde{\eta}(z)
  :=\int_\R e^{iz\tilde{\rho}}\eta(\tilde{\rho})\,\dd\tilde{\rho}.
\]
Since $\eta$ is compactly supported, uniformly for $1\le\rho_j\le11/10$, bounded $s$, and $y\in\R$,
\begin{align}
  G_{j,M}^+(s,y)
  &\longrightarrow
  \widetilde\eta(y-2s\rho_j),
  \label{eq:radial-outgoing-limit}\\
  G_{j,M}^-(s,y)
  &\longrightarrow
  \widetilde\eta(-y-2s\rho_j)
  \label{eq:radial-incoming-limit}
\end{align}
as $M\to\infty$.
Choose $R_1>0$ so that
\begin{equation}\label{eq:radial-profile-central-mass}
  c_\eta
  :=\int_{-R_1}^{R_1}|\widetilde{\eta}(z)|^2\,\dd z>0.
\end{equation}
Since $\widetilde{\eta}$ is a Schwartz function, we can then choose $R_2\gg R_1$ such that
\begin{equation}\label{eq:radial-incoming-tail-small}
  \int_{-\infty}^{-R_2}
  |\widetilde{\eta}(z)|^2\,\dd z
  \le
  \frac{c_\eta}{100}.
\end{equation}
Set
\[
  a=2R_2-R_1>0,
  \qquad
  b=\frac{11}{5}(R_2+1)+R_1.
\]
For every $s\in[R_2,R_2+1]$ and $1\le j\le M$,
$y-2s\rho_j\in[-R_1,R_1]$ implies $y\in[a,b]$,
while $y\in[a,b]$ implies $-y-2s\rho_j\le -R_2$.
Therefore, \eqref{eq:radial-profile-central-mass},
\eqref{eq:radial-incoming-tail-small}, and the uniform convergences
\eqref{eq:radial-outgoing-limit}--\eqref{eq:radial-incoming-limit}
show that, for all sufficiently large $M$,
\[
  \norm{G_{j,M}^+(s,y)}_{L_y^2([a,b])}
  \ge \frac34c_\eta^{1/2},
  \qquad
  \norm{G_{j,M}^-(s,y)}_{L_y^2([a,b])}
  \le \frac14c_\eta^{1/2},
\]
uniformly for $1\le j\le M$ and $s\in[R_2,R_2+1]$.
By the reverse triangle inequality,
\begin{equation}\label{eq:radial-main-profile-lower}
  \begin{aligned}
  &\norm{
    e^{i(My\rho_j-Ms\rho_j^2-\frac{\pi\nu}{2}-\frac\pi4)}
    G_{j,M}^+(s,y)
    +e^{-i(My\rho_j+Ms\rho_j^2-\frac{\pi\nu}{2}-\frac\pi4)}
    G_{j,M}^-(s,y)
  }_{L_y^2([a,b])}\\
  &\qquad\ge
  \norm{G_{j,M}^+(s,\cdot)}_{L_y^2([a,b])}
  -\norm{G_{j,M}^-(s,\cdot)}_{L_y^2([a,b])}
  \ge \frac12c_\eta^{1/2}.
  \end{aligned}
\end{equation}

It remains to control the remainder in
\eqref{eq:radial-shell-rescaled-solution}.
Indeed, the error term in
\eqref{eq:radial-shell-bessel-asymptotic} gives
\[
  |\mathcal R_{j,M}(s,y)|
  \lesssim
  r^{-\nu}
  \int_{3/4}^{4/3} (r\rho)^{-3/2}\rho^{1/2}|h_j(\rho)|\,\dd\rho
  \lesssim
  r^{-\frac{d+1}{2}}\norm{h_j}_{L^1((0,\infty))}.
\]
Since
$\norm{h_j}_{L^1((0,\infty))}=M^{-1/2}\norm{\eta}_{L^1(\R)}$
and $r=My$, uniformly for $1\le j\le M$, $s\in[R_2,R_2+1]$, we obtain
\begin{equation}\label{eq:radial-shell-remainder-small}
  \int_a^b
  |\mathcal R_{j,M}(s,y)|^2
  M^dy^{d-1}\,\dd y
  \lesssim M^{-2} \rightarrow 0
  \quad\text{as }M\to\infty.
\end{equation}

Combining \eqref{eq:radial-shell-rescaled-solution},
\eqref{eq:radial-main-profile-lower}, and
\eqref{eq:radial-shell-remainder-small}, we conclude that, for all
sufficiently large $M$,
\begin{equation}\label{eq:radial-single-shell-mass}
  \int_{aM}^{bM}
  |u_j(t,r)|^2r^{d-1}\,\dd r
  \ge c_1
\end{equation}
whenever
$R_2M\le t\le(R_2+1)M$, $1\le j\le M$,
where $c_1>0$ is independent of $M$, $t$, and $j$.
Set
\[
  F_M(t,x)=\sum_{j=1}^{M}|U_S(t)P_1g_j(x)|^2,
  \qquad
  \Omega_M=\{x\in\R^d:aM\le|x|\le bM\}.
\]
By \eqref{eq:radial-single-shell-mass}, for every
$t\in[R_2M,(R_2+1)M]$,
\[
  \int_{\Omega_M}F_M(t,x)\,\dd x
  \gtrsim M.
\]
Since $|\Omega_M|\sim M^d$, H\"older's inequality gives
\[
  \norm{F_M(t,x)}_{L_x^q(\R^d)}
  \ge
  |\Omega_M|^{1/q-1}
  \int_{\Omega_M}F_M(t,x)\,\dd x
  \gtrsim M^{\frac d q-(d-1)}.
\]
Hence
\begin{equation}\label{eq:radial-family-mixed-lower}
  \norm{F_M}_{L_t^p(\R;L_x^q(\R^d))}
  \gtrsim
  M^{\frac1p+\frac d q-(d-1)}.
\end{equation}
Applying \eqref{eq:radial-boundary-estimate} to the orthonormal
system $(g_j)_{j=1}^{M}$ with
$\lambda_j=1$, and
combining with \eqref{eq:radial-family-mixed-lower}, we obtain
\[
  M^{\frac1p+\frac d q-(d-1)}
  \lesssim M^{1/\beta}.
\]
Letting $M\to\infty$ proves
\eqref{eq:radial-shell-necessary-condition}.
\end{proof}

\begin{remark}
\label{rem:radial-global-necessary}
The necessary conditions in Proposition~\ref{prop:radial-necessary} also
apply to the corresponding frequency-global estimate.
Moreover, for the Schr\"odinger scaling line $2/p+d/q=d$, corresponding to the segment $EF$ in
Figure~\ref{fig:radial-admissible},
on the subsegment $BF$,
the range $\beta<p$ is known for the orthonormal
Strichartz estimate without radial restriction.
On the other hand, Proposition~\ref{prop:radial-necessary} yields the
necessary condition $\beta\le p$ even for radial datum.
Thus the range $\beta<p$ is sharp in the radial setting as well, in the
sense that no such estimate can hold for $\beta>p$.
\end{remark}

\medskip
\noindent\textbf{Acknowledgements.}
The work is partially supported by the Beijing Natural Science Foundation
(No.1242009), the National Key R\&D Program of China (No.2024YFA1015300),
and the National Natural Science Foundation of China
(Nos.11801536,12101028).  A.Z.~wishes to thank Zihua Guo for stimulating conversations and for generously sharing his ideas, and to thank Monash University for its hospitality during his visit.

\noindent\textbf{Declarations.}
The authors have no relevant financial or non-financial interests to
disclose.  Data sharing is not applicable because no datasets were generated
or analyzed in this study.

\bibliographystyle{alpha}
\bibliography{ref}

\end{document}